\documentclass[11pt,a4paper]{article}
\usepackage[utf8]{inputenc}
\usepackage{amsmath}
\usepackage{amsfonts}
\usepackage{amssymb}
\usepackage{amsthm}
\usepackage{makeidx}
\usepackage{graphicx}
\usepackage{authblk}
\usepackage{subfig}
\usepackage{subeqnarray}
\usepackage{comment}
\usepackage[T1]{fontenc}
\usepackage[top=2cm, bottom=2cm, left=2cm, right=2cm]{geometry}
\usepackage{marginnote}

\usepackage{imakeidx}
\usepackage{hyperref}
\usepackage{color}
\makeindex[intoc]

\usepackage{enumerate}

\newtheorem{theorem}{Theorem}[section]
\newtheorem{corollary}[theorem]{Corollary}

\newtheorem{definition}[theorem]{Definition}
\newtheorem{proposition}[theorem]{Proposition}
\newtheorem{remark}{Remark}[section]

\title{The $p\,$-Laplacian equation in a rough thin domain with terms concentrating on the boundary}
\date{}

\author[1]{Ariadne Nogueira\thanks{e-mail: ariadnen@ime.usp.br}}
\author[1]{Jean Carlos Nakasato\thanks{e-mail: nakasato@ime.usp.br}}
\affil[1]{Depto. Matem\'atica Aplicada, Instituto de Matem\'atica e Estat\'istica, Universidade de S\~ao Paulo,
	Rua do Mat\~ao 1010, S\~ao Paulo - SP, Brazil}

\begin{document}


	\maketitle
		\begin{abstract}
		In this work we use reiterated homogenization and unfolding operator approach to study the asymptotic behavior of the solutions of the $p$-Laplacian equation with Neumann boundary conditions set in a rough thin domain with concentrated terms on the boundary. 
		We study weak, resonant and high roughness, respectively. In the three cases, we deduce the effective equation capturing the dependence on the geometry of the thin channel and the neighborhood where the concentrations take place. 	
		\end{abstract}

	\noindent \emph{Keywords:} $p$-Laplacian, Neumann boundary condition, Thin domains, Homogenization. \\
	\noindent 2010 \emph{Mathematics Subject Classification.} 35B25, 35B40, 35J92.

	\section{Introduction}
	
	In this work, we are interested in analyzing the asymptotic behavior of solutions of a quasilinear elliptic problem posed in a family of thin domains $R^\varepsilon$ with forcing terms concentrated on a neighborhood $\mathcal{O}^\varepsilon \subset R^\varepsilon$ of the boundary $\partial R^\varepsilon$. We assume
	\begin{equation}\label{TDs}
	R^\varepsilon=\left\lbrace (x,y)\in \mathbb{R}^2:0<x<1, 0<y<\varepsilon g\left(\dfrac{x}{\varepsilon^\alpha}\right)\right\rbrace,\,\,\,\,0<\varepsilon\ll 1,
	\end{equation}
	for any $\alpha > 0$, where 
	
	\par\medskip 	
	
	($\mathbf{H_g}$) {\sl $g : \mathbb{R}\rightarrow \mathbb{R}$ is a strictly positive, bounded, Lipschitz, $L_g$-periodic function and differentiable almost everywhere.  Moreover, we define $$g_0 = \min_{x \in \mathbb{R}} g(x) \quad  \textrm{ and } \quad g_1 = \max_{x \in \mathbb{R}} g(x)$$ 
		so that $0<g_0\leq g(x)\leq g_1$ for all $x\in\mathbb{R}$.}
			
	On the other hand, we set the narrow strip $\mathcal{O}^\varepsilon$ by 
	\begin{equation*} \label{strip}
	\mathcal{O}^\varepsilon=\left\lbrace (x,y)\in \mathbb{R}^2:0<x<1, \varepsilon \left[g\left(\dfrac{x}{\varepsilon^\alpha}\right)-\varepsilon^\gamma h\left(\dfrac{x}{\varepsilon^\beta}\right)\right]<y<\varepsilon g\left(\dfrac{x}{\varepsilon^\alpha}\right)\right\rbrace
	\end{equation*}
	with $\gamma$, $\beta$, and $\alpha>0$, and $h:\mathbb{R}\to\mathbb{R}$ being a positive function of class $\mathcal{C}^1$, $L_h$-periodic with bounded derivatives. 
	
	Notice that parameters $\alpha$ and $\beta$ set respectively the roughness order of the upper boundary of the thin domain $R^\varepsilon$ and the singular shape of the $\varepsilon^\gamma$-neighborhood $\mathcal{O}^\varepsilon$, whereas their profile are given by positive and periodic functions $g$ and $h$. Finally, the parameter $\gamma>0$ only establishes the order of the Lesbegue measure of $\mathcal{O}^\varepsilon$ with respect to $R^\varepsilon$.

	We first analyze the solutions of the problem 
	\begin{equation}\label{variational_l}\int_{R^\varepsilon} \{|\nabla u_\varepsilon|^{p-2}\nabla u_\varepsilon\nabla \varphi+|u_\varepsilon|^{p-2}u_\varepsilon\varphi \}dxdy=\dfrac{1}{\varepsilon^\gamma}\int_{\mathcal{O}_\varepsilon}f^\varepsilon\varphi dxdy, \quad \varphi\in W^{1,p}(R^\varepsilon),\end{equation}
which is the variational formulation of the quasi-linear equation 
	\begin{equation}\label{problem_l}
	\left\lbrace\begin{array}{lll}
	-\Delta_p u_\varepsilon+|u_\varepsilon|^{p-2}u_\varepsilon=\dfrac{1}{\varepsilon^\gamma}\chi_{\mathcal{O}^\varepsilon}f^\varepsilon\mbox{ in }R^\varepsilon\\[0.3cm]
	|\nabla u_\varepsilon|^{p-2}\dfrac{\partial u_\varepsilon}{\partial \nu^\varepsilon}=0\mbox{ on }\partial R^\varepsilon 
	\end{array}\right. .
	\end{equation}
	Here $\nu^\varepsilon$ denotes the unit outward normal to the boundary $\partial R^\varepsilon$ and, for $1<p<\infty$, $\Delta_p\,\cdot\,$
	is the $p$-Laplacian differential operator. Consider $\chi_{\mathcal{O}^\varepsilon}$ the characteristic function of the set $\mathcal{O}^\varepsilon$
	and we take forcing terms $f^\varepsilon\in L^{p'}(R^\varepsilon)$ for $p'>0$, with $1/p'+1/p=1$.

Notice that $R^\varepsilon \subset (0,1) \times (0, \varepsilon g_1)$ for all $\varepsilon>0$ degenerating to the unit interval as $\varepsilon \to 0$.  
Hence, it is reasonable to expect that the family of solutions $u_\varepsilon$ will converge to a 
solution of a one-dimensional equation of the same type, with homogeneous Neumann boundary condition, capturing the effect of the thin domain $R^\varepsilon$ and narrow strip $\mathcal{O}^\varepsilon$. 

Following previous works as \cite{arrieta, AMA-thin, SMarcone, PazaninPereira}, we use the characteristic function $\chi_{\mathcal{O}^\varepsilon}$ and term $1/\varepsilon^\gamma$ to express concentration on $\mathcal{O}^\varepsilon \subset R^\varepsilon$.
We obtain three different limit problems according to the oscillatory order established by parameter $\alpha$: assuming $0<\alpha<1$, we get the effective equation for that we call weak oscillation case; setting $\alpha=1$, we obtain the known resonant case; and taking $\alpha>1$, we consider the high oscillating scene. See Figure  \ref{fig1}, where the three cases are illustrated. 
	
	\begin{figure}[!htb]
		\centering
		\subfloat[Weak oscillation: $\alpha<1$.]{
			\scalebox{0.35}{\includegraphics{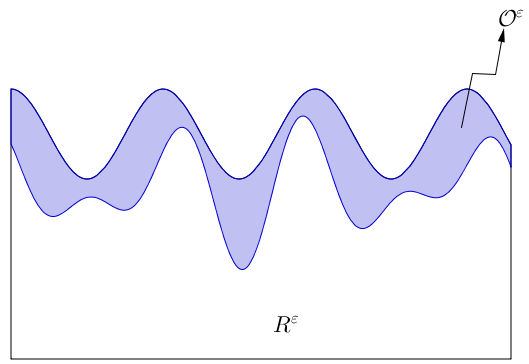}}
			\label{figdroopy}
		}
		\quad 
		\subfloat[Resonant case: $\alpha=1$.]{
			\scalebox{0.35}{\includegraphics{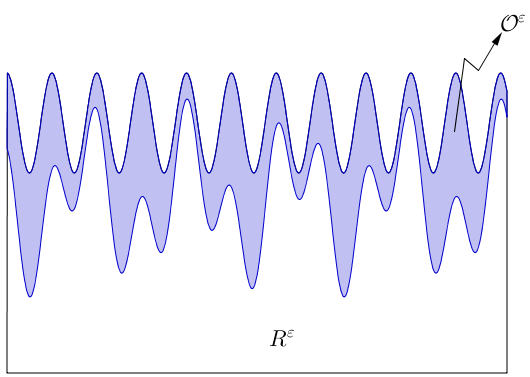}}
			\label{figsnoop}
		}
		\quad
		\subfloat[High oscillation: $\alpha>1$.]{
			\scalebox{0.35}{\includegraphics{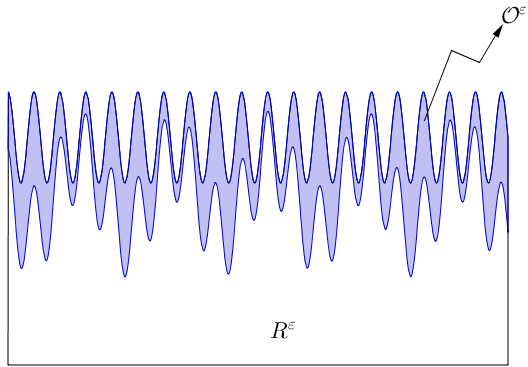}}
			\label{figdroopy1}
		}
		\caption{Examples of oscillatory boundaries set by $\alpha>0$.}
		\label{fig1}
	\end{figure}\newpage 
	
		In order to accomplish our goal, we combine and adapt techniques developed to deal with asymptotic analysis for singular boundary value problems. We use the unfolding operator method for thin domains from \cite{AM,AM2}, some monotone techniques used for the treatment of the $p$-Laplacian performed for instance in \cite{nakasato1, donato1990, nakasato}, tools to deal with concentrated phenomenas and singular integrals introduced in \cite{gleice, arrieta, Nogueira,AMA-thin}, and iterated homogenization technique given by \cite{bunoiu,lions2001}.\\
	
	The homogenized equation of \eqref{problem_l} is the one dimensional $p$-Laplacian problem
	\begin{equation} \label{limint}
	\left\lbrace \begin{array}{ll}
	-q(|u'|^{p-2}u')'+|u|^{p-2}u=\bar{f} \textrm{ in }(0,1)\\
	u'(0)=u'(1)=0.
	\end{array}\right.
	\end{equation}
	We see that forcing term $\bar{f}$ is set by convergence assumptions on $f^\varepsilon$ and the functional 
	\begin{equation} \label{intc}
	\varphi \in W^{1,p}(R^\varepsilon) \mapsto \dfrac{1}{\varepsilon^\gamma}\int_{\mathcal{O}_\varepsilon}f^\varepsilon\varphi dxdy
	\end{equation}
	as $\varepsilon \to 0$. On the other side, the homogenized coefficient $q$ depends on parameter $\alpha$ by   
	\begin{equation*}\label{qvalue}
	q=\left\lbrace
	\begin{gathered}
	\dfrac{1}{\langle g\rangle_{(0,L_{g})}\langle 1/g^{p'-1}\rangle_{(0,L_{g})}^{p-1}} \quad \mbox{ if }0<\alpha<1\\
	\dfrac{1}{|Y^*|}\displaystyle\int_{Y^*}|\nabla v|^{p-2}\partial_{y_1}v\,dy_1dy_2 \quad \mbox{ if }\alpha=1\\
	\dfrac{g_{0}}{\langle g\rangle_{(0,L_{g})}} \quad \mbox{ if }\alpha>1
	\end{gathered}
	\right.
	\end{equation*}
	where $v$ is an auxiliary function which is the unique solution of the following problem
	\begin{align} \label{auxprob}
	\int_{Y^*}|\nabla v|^{p-2}\nabla v \nabla \varphi\, dy_1dy_2=0 \ \forall\varphi\in W_{\#}^{1,p}(Y^*) \\
	(v-y_1)\in W_{\#}^{1,p}(Y^*) \text{ with } \langle(v-y_1)\rangle_{Y^*}=0. \nonumber
	\end{align}
	Here we are setting the space $W_{\#}^{1,p}(Y^*)=\{\varphi\in W^{1,p}(Y^*) \; ; \; \varphi|_{\partial_{left}Y^*}=\varphi|_{\partial_{right}Y^*} \}$ and denoting $\left\langle \varphi\right\rangle_{\mathcal{U}}$ the average of $\varphi \in L^1_{loc}(\mathbb{R}^2)$ on open sets $\mathcal{U} \subset \mathbb{R}^2$.

Due to Minty-Browder's Theorem, one can show that problem \eqref{auxprob} is well posed, and then, the constant $q$ is well defined and positive for any $\alpha>0$ (see \cite{nakasato1, nakasato}).
	 If $\alpha<1$, the homogenized coefficient $q$ depends on $p \in (1,\infty)$, which establishes the order of the $p$-Laplacian operator, and on function $g$, which sets the profile of $R^\varepsilon$. 	
	At the case $\alpha>1$,  the coefficient $q$ does not depend explicitly on $p$, but just on the average of $g$ and its minimum value $g_0$, which is strictly positive. \\

    Next, we address a bounded nonlinear perturbation of the $p$-Laplacian. We consider  \\
	\begin{equation}\label{problem_s}
	\left\lbrace\begin{array}{lll}
	-\Delta_p u_\varepsilon+|u_\varepsilon|^{p-2}u_\varepsilon=\dfrac{1}{\varepsilon^\gamma}\chi_{\mathcal{O}^\varepsilon}f(u^\varepsilon)\mbox{ in }R^\varepsilon\\[0.3cm]
		|\nabla u_\varepsilon|^{p-2}\dfrac{\partial u_\varepsilon}{\partial \nu^\varepsilon}=0\mbox{ on }\partial R^\varepsilon
	\end{array}\right.
	\end{equation}
	assuming $p\geq 2$ and
	
	\par\medskip 	
	
	($\mathbf{H_f}$) $f\in C^2(\mathbb{R})$ is a bounded function with bounded derivatives.
	
	\par\medskip 
	
	As we will see, its homogenized equation is quite similar to \eqref{limint}.  We get the same homogenized coefficient $q$, but also an additional coefficient on the nonlinear reaction term. This former captures the oscillatory behavior and geometry of the thin domain and the narrow strip. The effective equation for this case is
	\begin{equation}\label{limite_s}
	\left\lbrace \begin{array}{ll}
	-q(|u'|^{p-2}u')'+|u|^{p-2}u=\dfrac{\langle h \rangle_{(0,L_h)} }{\langle g \rangle_{(0,L_g)} }f(u) \textrm{ in }(0,1),\\
	u'(0)=u'(1)=0.
	\end{array}\right.
	\end{equation}

	In the analysis of both problems \eqref{problem_l} and \eqref{limite_s}, we restrict the oscillatory behavior of the narrow strip by the oscillation parameter $\alpha$ of $R^\varepsilon$ dealing with $0<\beta<\alpha$. It is due to technical issues set by our iterated homogenization method adapted from \cite{bunoiu,lions2001} (see Proposition \ref{Propaux01} and Remark \ref{bmam0} below) in order to deal with concentrated integrals as \eqref{intc}. Anyway, the results still holds on conditions $0<\alpha<\beta$ (if we interchange the hypothesis of smoothness on $g$ and $h$), and on $\alpha = \beta>0$ under additional assumption $L_g = \kappa L_h$, with $\kappa\in\mathbb{N}$, on the period of functions $g$ and $h$. However, as $\beta=\alpha=0$, no oscillatory behavior is observed, and then, the analysis is a direct consequence of results from \cite{arrieta, SMarcone, MRi2}.

	The paper is organized as follows: in Section \ref{pre}, we state some notations and basic results introducing the iterated unfolding needed in the proofs. In Section \ref{linear}, we analyze problem \eqref{problem_l} for the three cases, $\alpha=1$, $0<\alpha<1$ and $\alpha>1$, obtaining the homogenized equation. In Section \ref{nonlinear}, we deal with the nonlinear perturbation of \eqref{problem_l} given by \eqref{limite_s}.

	\section{Preliminaries} \label{pre}
	
	In this section, we introduce some notations and recall some results concerning to monotone operators and the unfolding operator method. Next, we introduce a kind of reiterated homogenization approach for concentrated integrals proving some properties which will be useful in our analysis. 
	
	Recall we consider a two-dimensional family of thin domains exhibiting oscillatory behavior at its top boundary. 
	Let $Y^*$ given by
		\begin{equation*} 
		Y^*=\left\lbrace \right(y_1,y_2) \in \mathbb{R}^2 : 0<y_1<L_g\mbox{ and }0<y_2<g(y_1)\rbrace,
		\end{equation*}
	the basic cell of the thin domain $R^\varepsilon$ and 
	$$
	\left\langle \varphi\right\rangle_{\mathcal{O}} := \frac{1}{|\mathcal{O}|} \int_{\mathcal{O}} \varphi(x) \, dx
	$$
	the average of $\varphi \in L^1_{loc}(\mathbb{R}^2)$ on any open bounded set $\mathcal{O} \subset \mathbb{R}^2$. 
	
	We will also need to consider the following functional spaces which are defined by periodic functions in the variable $y_1 \in (0,L_g)$. Namely, 
	$$
	\begin{gathered}
	L^p_\#(Y^*) = \{ \varphi \in L^p(Y^*) \, : \, \varphi(y_1,y_2) \textrm{ is $L_g$-periodic in $y_1$ } \}, \\
	L^p_\#\left((0,1)\times Y^*\right) =  \{ \varphi \in L^p((0,1) \times Y^*) \, : \, \varphi(x, y_1,y_2) \textrm{ is $L_g$-periodic in $y_1$ } \}, \\
	W_{\#}^{1,p}(Y^*) = \{ \varphi\in W^{1,p}(Y^*) \, : \,  \varphi |_{\partial_{left} Y^*} = \varphi|_{\partial_{right} Y^*}\}.
	\end{gathered}
	$$
	
	If we denote by $[a]_L$ the unique integer number such that $a=[a]_L L+\{a\}_L$ where $\{a\}_L\in [0,L)$, then for each $\varepsilon>0$ and any $x\in\mathbb{R}$ we have 
	\begin{equation*}
	x=\varepsilon^\alpha \left[\frac{x}{\varepsilon^\alpha}\right]_{L_g}L_g+\varepsilon^\alpha\left\{\frac{x}{\varepsilon^\alpha}\right\}_{L_g}\mbox{ where }\left\{\frac{x}{\varepsilon^\alpha}\right\}_{L_g}\in [0,L_g)
	\end{equation*}
	with 
	\begin{equation} \label{Ox}
	x - \varepsilon^\alpha \left[\frac{x}{\varepsilon^\alpha}\right]_{L_g}L_g+\varepsilon^\alpha\left\{\frac{x}{\varepsilon^\alpha}\right\}_{L_g} = O(\varepsilon^\alpha).
	\end{equation}
	
	Let us also denote 
	\begin{equation*}
	I_\varepsilon=\mbox{ Int }\left(\bigcup_{k=0}^{N_\varepsilon}\left[kL_g\varepsilon^\alpha,(k+1)L_g\varepsilon^\alpha\right]\right), 
	\end{equation*}
	where $N_\varepsilon$ is the largest integer such that $\varepsilon^\alpha L_g(N_\varepsilon+1)\leq 1$,  
	$$
	\begin{gathered}
	\Lambda_\varepsilon=(0,1)\backslash I_\varepsilon=[\varepsilon^\alpha L_g(N_\varepsilon+1),1), \\
	R^\varepsilon_0=\left\lbrace(x,y)\in \mathbb{R}^2:x\in I_\varepsilon, 0<y<\varepsilon  g\left(\frac{x}{\varepsilon^\alpha}\right)\right\rbrace, \\
	R^\varepsilon_1=\left\lbrace(x,y)\in \mathbb{R}^2:x\in \Lambda_\varepsilon, 0<y<\varepsilon  g\left(\frac{x}{\varepsilon^\alpha}\right)\right\rbrace.
	\end{gathered}
	$$
	
	\begin{remark} Observe that, if $\varepsilon^\alpha L_g(N_\varepsilon+1)=1$, then $\Lambda_\varepsilon=\emptyset$. Consequently,  $R_0^\varepsilon=R^\varepsilon$ and $R_1^\varepsilon=\emptyset$. 
	\end{remark}

	Now, let us recall a result from \cite{cringanu} concerning to the $p$-Laplacian.
	
	\begin{proposition}\label{propositionduallitymap}
		Let $\Omega$ be an open bounded of $\mathbb{R}^n$. The duality mapping $J:W^{1,p}(\Omega)\to (W^{1,p}(\Omega))^*$ defined by
		$$
			(Ju,v)=\int_\Omega |\nabla u|^{p-2}\nabla u\nabla v +|u|^{p-2}u v\,dx,\quad\forall u,v\in W^{1,p}(\Omega),
		$$
		is single valued, bijective, with inverse bounded, continuous and monotone.
	\end{proposition}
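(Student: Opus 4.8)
The plan is to recognize $J$ as the duality mapping of $W^{1,p}(\Omega)$ associated with the gauge function $\phi(t)=t^{p-1}$, and then to exploit the classical theory of monotone operators together with the uniform convexity of $W^{1,p}(\Omega)$ for $1<p<\infty$. First I would check that $J$ is well defined and single valued: for fixed $u$, H\"older's inequality shows that $v\mapsto (Ju,v)$ is a bounded linear functional with $\|Ju\|_*\le\|u\|^{p-1}$, while the choice $v=u$ gives $(Ju,u)=\int_\Omega(|\nabla u|^p+|u|^p)\,dx=\|u\|^p$. Combining the two yields $\|Ju\|_*=\|u\|^{p-1}$ and shows that $Ju$ is a norming functional, so the explicit integral formula coincides with the (a priori set-valued) duality map, which is therefore single valued.

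Next I would establish the three structural properties needed to invoke the Minty--Browder surjectivity theorem. Monotonicity follows termwise from the standard inequalities for the vector field $\xi\mapsto|\xi|^{p-2}\xi$ and the scalar map $s\mapsto|s|^{p-2}s$, giving $(Ju-Jv,u-v)\ge 0$ with equality only when $u=v$; this strict monotonicity already yields injectivity. Coercivity is immediate, since $(Ju,u)/\|u\|=\|u\|^{p-1}\to\infty$ as $\|u\|\to\infty$ because $p>1$. For continuity I would use that the Nemytskii operators $w\mapsto|w|^{p-2}w$ act continuously from $L^p$ into $L^{p'}$ (the growth $\big||w|^{p-2}w\big|=|w|^{p-1}$ gives exactly $L^{p'}$ control), so that $u_n\to u$ in $W^{1,p}(\Omega)$ forces $|\nabla u_n|^{p-2}\nabla u_n\to|\nabla u|^{p-2}\nabla u$ and $|u_n|^{p-2}u_n\to|u|^{p-2}u$ in $L^{p'}$, whence $Ju_n\to Ju$ in $(W^{1,p}(\Omega))^*$; this is stronger than the hemicontinuity required below. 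Since $W^{1,p}(\Omega)$ is reflexive, Minty--Browder then gives that $J$ is onto, and together with injectivity we conclude that $J$ is bijective.

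It remains to study $J^{-1}$. Its monotonicity is inherited directly from that of $J$, and its boundedness follows from the norm identity $\|u\|=\|Ju\|_*^{1/(p-1)}$, which maps bounded sets to bounded sets. The delicate point, and the one I expect to be the main obstacle, is the continuity of $J^{-1}$. I would argue as follows: if $g_n=Ju_n\to g=Ju$ in the dual, then $(Ju_n-Ju,u_n-u)\to 0$; boundedness of $(u_n)$ together with reflexivity gives a subsequence $u_{n_k}\rightharpoonup w$, and Minty's monotonicity trick identifies $Jw=g$, so $w=u$ by injectivity. The final upgrade from weak to strong convergence $u_{n_k}\to u$ is where the uniform convexity of $W^{1,p}(\Omega)$ (equivalently, the property $(S_+)$ of $J$) is essential: the convergence $(Ju_{n_k},u_{n_k})\to(Ju,u)$, that is $\|u_{n_k}\|\to\|u\|$, combined with weak convergence in a uniformly convex space, forces norm convergence. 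A standard subsequence argument then yields full continuity of $J^{-1}$, completing the proof.
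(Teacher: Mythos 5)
The paper does not actually prove this proposition: it is recalled verbatim from the reference \cite{cringanu}, so there is no internal proof to compare yours against. Your argument is correct, complete in its essentials, and is the standard duality-mapping/monotone-operator proof that such references rely on. The key steps all check out: the norming identity $\|Ju\|_{*}=\|u\|^{p-1}$ (valid for the norm $\|u\|=\bigl(\|\nabla u\|_{L^p}^p+\|u\|_{L^p}^p\bigr)^{1/p}$, which is the one implicitly used in the statement); strict monotonicity from the pointwise inequalities for $\xi\mapsto|\xi|^{p-2}\xi$ and $s\mapsto|s|^{p-2}s$, hence injectivity; coercivity; continuity of $J$ via the continuity of the Nemytskii maps $L^p\to L^{p'}$; surjectivity by Minty--Browder in the reflexive space $W^{1,p}(\Omega)$; boundedness of $J^{-1}$ from the norm identity; and continuity of $J^{-1}$ by the Minty trick to identify the weak limit followed by the Radon--Riesz upgrade to strong convergence, using uniform convexity of $W^{1,p}(\Omega)$ for $1<p<\infty$. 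Two small remarks. First, your inference that exhibiting $Ju$ as a norming functional makes the a priori set-valued duality map single valued is backwards as stated: membership of one element in the duality set does not force that set to be a singleton (that requires smoothness of the norm, equivalently strict convexity of the dual); this is immaterial here, since $J$ as defined by the integral formula is trivially single valued, and bijectivity plus the properties of $J^{-1}$ are the real content. Second, monotonicity of the inverse should be read with respect to the pairing $(f-g,\,J^{-1}f-J^{-1}g)\ge 0$, which, as you say, is inherited directly from monotonicity of $J$; stating this explicitly would make that part airtight.
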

%
	
	Next we recall the definition to the unfolding operator and some of its properties. For proofs and details see \cite{AM,AM2}.
	
	\begin{definition}
		Let $\varphi$ a Lebesgue-measurable function in $R^\varepsilon$. The unfolding operator $\mathcal{T}_\varepsilon$ acting on $\varphi$ is defined as the following function in $(0,1)\times Y^*$
		\begin{eqnarray*}
			\mathcal{T}_\varepsilon\varphi(x,y_1,y_2)=\left\lbrace\begin{array}{ll}
				\varphi\left(\varepsilon^\alpha\left[\frac{x}{\varepsilon^\alpha}\right]_{L_g}L_g+\varepsilon^\alpha y_1,\varepsilon y_2\right)\mbox{, for }(x,y_1,y_2)\in I_\varepsilon\times Y^*,\\
				0, \mbox{ for }(x,y_1,y_2)\in \Lambda_\varepsilon\times Y^*.
			\end{array}\right.
		\end{eqnarray*}
	\end{definition}
	
	\begin{proposition}\label{unfoldingproperties}
		The unfolding operator satisfies the following properties:
		\begin{enumerate}
			\item $\mathcal{T}_\varepsilon$ is linear under sum and multiplication operations;
			\item Let $\varphi\in L^1(R^\varepsilon)$. Then,
			\begin{eqnarray*}
				& & \frac{1}{L_g}\int_{(0,1)\times Y^*}\mathcal{T}_\varepsilon(\varphi)(x,y_1,y_2)dxdy_1dy_2=\frac{1}{\varepsilon}\int_{R_0^\varepsilon}\varphi(x,y)dxdy\\
				&& \qquad \qquad = \frac{1}{\varepsilon}\int_{R^\varepsilon}\varphi(x,y)dxdy-\frac{1}{\varepsilon}\int_{R_1^\varepsilon}\varphi(x,y)dxdy;
			\end{eqnarray*}
			\item For all $\varphi \in L^p(R^\varepsilon)$, $\mathcal{T}_\varepsilon(\varphi)\in L^p\left((0,1)\times Y^*\right)$, $1\leq p\leq \infty$. Moreover
			\begin{equation*}
			\left|\left|\mathcal{T}_\varepsilon(\varphi)\right|\right|_{L^p\left((0,1)\times Y^*\right)}=\left(\frac{L_g}{\varepsilon}\right)^{\frac{1}{p}}\left|\left|\varphi\right|\right|_{L^p(R_0^\varepsilon)}\leq \left(\frac{L_g}{\varepsilon}\right)^{\frac{1}{p}}\left|\left|\varphi\right|\right|_{L^p(R^\varepsilon)}.
			\end{equation*}
			If $p=\infty$,
			\begin{equation*}
			\left|\left|\mathcal{T}_\varepsilon(\varphi)\right|\right|_{L^\infty\left((0,1)\times Y^*\right)}=\left|\left|\varphi\right|\right|_{L^\infty(R_0^\varepsilon)}\leq \left|\left|\varphi\right|\right|_{L^\infty(R^\varepsilon)};
			\end{equation*}
			\item For all $\varphi \in W^{1,p}(R^\varepsilon)$, $1\leq p\leq \infty$,
			\begin{equation*}
			\partial_{y_1}\mathcal{T}_\varepsilon(\varphi)=\varepsilon^\alpha \mathcal{T}_\varepsilon(\partial_{x}\varphi)\mbox{ and }\partial_{y_2}\mathcal{T}_\varepsilon(\varphi)=\varepsilon \mathcal{T}_\varepsilon(\partial_{y}\varphi)\mbox{ a.e. in }(0,1)\times Y^*;
			\end{equation*}
			\item \label{uci}
			Let $\left(\varphi_\varepsilon\right)$ be a sequence in $L^p(R^\varepsilon)$, $1<p\leq\infty$ with the norm $\left|\left|\left|\varphi_\varepsilon\right|\right|\right|_{L^p(R^\varepsilon)}$ uniformly bounded. 
			Then $\left(\varphi_\varepsilon\right)$ satisfies the unfolding criterion for integrals (u.c.i), that is, 
			\begin{equation*}
			\frac{1}{\varepsilon}\int_{R_1^\varepsilon}|\varphi_\varepsilon|dxdy\rightarrow 0.
			\end{equation*}			
			Furthermore, let $\left(\psi_\varepsilon\right)$ be a sequence in $L^{q}(R^\varepsilon)$, also with $\left|\left|\left|\psi_\varepsilon\right|\right|\right|_{L^{q}(R^\varepsilon)}$ uniformly bounded, $\frac{1}{p}+\frac{1}{q}=\frac{1}{r}$, with $r>1$. 
			Then, the product sequence $(\varphi_\varepsilon\psi_\varepsilon)$ satisfies (u.c.i).
			If we take $\phi\in L^{p'}(0,1)$ then the sequence $\varphi_\varepsilon\phi$ satisfies (u.c.i).
			\item Let $\varphi\in L^p(0,1)$, $1\leq p<\infty$. Then, 
			\begin{equation*}
			\mathcal{T}_\varepsilon\varphi\rightarrow \varphi\mbox{ strongly in }L^p\left((0,1)\times Y^*\right).
			\end{equation*}
			
			\item \label{conv_unf_inicio} 
			Let $(\varphi_\varepsilon)$ a sequence in $L^p(0,1)$, $1\leq p<\infty$, such that 
			\begin{equation*}
			\varphi_\varepsilon\rightarrow \varphi\mbox{ strongly in }L^p(0,1).
			\end{equation*}
			Then, 
			\begin{equation*}
			\mathcal{T}_\varepsilon\varphi_\varepsilon\rightarrow \varphi\mbox{ strongly in }L^p\left((0,1)\times Y^*\right).
			\end{equation*}\,
		\end{enumerate}
	\end{proposition}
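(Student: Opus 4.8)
The plan is to derive the whole list from the single change-of-variables identity in item (2), since items (1) and (4) are immediate from the definition and the analytic content is concentrated in the unfolding criterion (item \ref{uci}) and the two convergence statements. Linearity holds because $\mathcal{T}_\varepsilon$ is defined by composition with a fixed affine change of variables on each cell; the derivative rules in item (4) follow by differentiating that composition with the chain rule, since the map $x\mapsto \varepsilon^\alpha[x/\varepsilon^\alpha]_{L_g}L_g+\varepsilon^\alpha y_1$ has $x$-derivative $\varepsilon^\alpha$ off the jump set and $y\mapsto \varepsilon y_2$ has $y$-derivative $\varepsilon$, producing exactly the factors $\varepsilon^\alpha$ and $\varepsilon$.

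For item (2), I would decompose $R_0^\varepsilon$ into the disjoint cells $Q_k^\varepsilon=\{(x,y): x\in(kL_g\varepsilon^\alpha,(k+1)L_g\varepsilon^\alpha),\ 0<y<\varepsilon g(x/\varepsilon^\alpha)\}$ for $0\le k\le N_\varepsilon$, and on each $Q_k^\varepsilon$ perform the substitution $x=\varepsilon^\alpha(kL_g+y_1)$, $y=\varepsilon y_2$ with Jacobian $\varepsilon^{1+\alpha}$; summing over $k$ and dividing by $L_g$ yields the stated identity, and the second equality is just the disjoint decomposition $R^\varepsilon=R_0^\varepsilon\cup R_1^\varepsilon$. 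Item (3) is then obtained by applying item (2) to $|\varphi|^p$, while the $L^\infty$ statement is read directly from the definition, as $\mathcal{T}_\varepsilon$ only reindexes the values of $\varphi$ on $R_0^\varepsilon$.

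The heart of the proof is item \ref{uci}. I would first note that $\Lambda_\varepsilon$ has length strictly less than $\varepsilon^\alpha L_g$ and that $R_1^\varepsilon$ has height at most $\varepsilon g_1$, so $|R_1^\varepsilon|\le \varepsilon^\alpha L_g\,\varepsilon g_1=O(\varepsilon^{1+\alpha})$. Applying H\"older's inequality over $R_1^\varepsilon$ and using the rescaled thin-domain norm $|||\varphi_\varepsilon|||_{L^p(R^\varepsilon)}=\varepsilon^{-1/p}\|\varphi_\varepsilon\|_{L^p(R^\varepsilon)}$ gives
\begin{equation*}
\frac{1}{\varepsilon}\int_{R_1^\varepsilon}|\varphi_\varepsilon|\,dxdy\le \frac{1}{\varepsilon}|R_1^\varepsilon|^{1/p'}\|\varphi_\varepsilon\|_{L^p(R^\varepsilon)}\le C\,\varepsilon^{-1+(1+\alpha)/p'+1/p}=C\,\varepsilon^{\alpha/p'},
\end{equation*}
and since $\alpha>0$ and $1\le p'<\infty$ for $1<p\le\infty$ this tends to $0$. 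The product statements follow the same way, after estimating $\frac{1}{\varepsilon}\int_{R_1^\varepsilon}|\varphi_\varepsilon\psi_\varepsilon|$ with the generalized H\"older inequality for the exponents $1/p+1/q=1/r$, $r>1$ (respectively $1/p+1/p'=1$ for the pairing with $\phi\in L^{p'}(0,1)$), the uniform bounds supplying the constant.

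Finally, item (6) and item \ref{conv_unf_inicio} follow by a density and triangle-inequality argument. For item (6) I would first take $\varphi\in C([0,1])$: by \eqref{Ox} the spatial argument of $\mathcal{T}_\varepsilon\varphi$ differs from $x$ by $O(\varepsilon^\alpha)$ uniformly, so uniform continuity yields $\mathcal{T}_\varepsilon\varphi\to\varphi$ uniformly, hence in $L^p((0,1)\times Y^*)$; the general case follows from the density of $C([0,1])$ in $L^p(0,1)$ together with the uniform bound $\|\mathcal{T}_\varepsilon\psi\|_{L^p((0,1)\times Y^*)}\le (L_g g_1)^{1/p}\|\psi\|_{L^p(0,1)}$, which comes from item (3) applied to a function of $x$ alone. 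For the last statement I would write $\mathcal{T}_\varepsilon\varphi_\varepsilon-\varphi=\mathcal{T}_\varepsilon(\varphi_\varepsilon-\varphi)+(\mathcal{T}_\varepsilon\varphi-\varphi)$ and bound the first term by $(L_g g_1)^{1/p}\|\varphi_\varepsilon-\varphi\|_{L^p(0,1)}\to 0$ via item (3), and the second by item (6). The main obstacle I anticipate is purely bookkeeping: getting item \ref{uci} right requires carefully tracking the powers of $\varepsilon$ generated by the rescaled norm and by $|R_1^\varepsilon|$, and checking that the resulting exponent $\alpha/p'$ is strictly positive for every admissible $p$.
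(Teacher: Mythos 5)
First, a point of reference: the paper itself gives no proof of this proposition --- it is recalled from the cited works of Arrieta and Villanueva-Pesqueira ("For proofs and details see [AM, AM2]"), so your argument has to stand on its own. Most of it does, and it follows the standard route: the cell decomposition of $R_0^\varepsilon$ with the change of variables of Jacobian $\varepsilon^{1+\alpha}$ gives item (2); item (3) follows by applying (2) to $|\varphi|^p$; the bound $|R_1^\varepsilon|\le \varepsilon^{1+\alpha}L_g g_1$ plus H\"older gives the exponent $\alpha/p'>0$ for the first u.c.i.\ statement and $\alpha/r'>0$ for the product with $r>1$; and the density/triangle arguments for items (6) and (7), with the uniform bound $(L_g g_1)^{1/p}\|\psi\|_{L^p(0,1)}$ for functions of $x$ alone, are correct.

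There is, however, a genuine gap in the last part of item (5), the u.c.i.\ for $\varphi_\varepsilon\phi$ with $\phi\in L^{p'}(0,1)$. Here the exponents are conjugate, $1/p+1/p'=1$, i.e.\ ``$r=1$'', and your power counting yields exactly
\begin{equation*}
\frac{1}{\varepsilon}\int_{R_1^\varepsilon}|\varphi_\varepsilon\phi|\,dxdy \;\le\; \frac{1}{\varepsilon}\,\|\varphi_\varepsilon\|_{L^p(R_1^\varepsilon)}\,\|\phi\|_{L^{p'}(R_1^\varepsilon)} \;\le\; C\,\varepsilon^{-1}\cdot\varepsilon^{1/p}\cdot\varepsilon^{1/p'}\;=\;C,
\end{equation*}
which is $O(1)$, not $o(1)$: there is no spare power of $\varepsilon$ left. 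The missing idea is that $\phi$ is a \emph{fixed} function of $x$ alone, so
\begin{equation*}
\frac{1}{\varepsilon}\int_{R_1^\varepsilon}|\phi|^{p'}\,dxdy\;\le\; g_1\int_{\Lambda_\varepsilon}|\phi(x)|^{p'}\,dx\;\longrightarrow\;0
\end{equation*}
by absolute continuity of the Lebesgue integral, since $|\Lambda_\varepsilon|<\varepsilon^\alpha L_g\to 0$; then H\"older with respect to the measure $\varepsilon^{-1}dxdy$, together with the uniform bound on $\varepsilon^{-1}\int_{R_1^\varepsilon}|\varphi_\varepsilon|^p$, closes the argument. This ingredient is not optional: if $\phi$ were allowed to vary (merely bounded in $L^{p'}(0,1)$), the statement is false --- take $\phi_\varepsilon=|\Lambda_\varepsilon|^{-1/p'}\chi_{\Lambda_\varepsilon}$ and $\varphi_\varepsilon=|\Lambda_\varepsilon|^{-1/p}\chi_{R_1^\varepsilon}$, for which both norms stay bounded while $\varepsilon^{-1}\int_{R_1^\varepsilon}|\varphi_\varepsilon\phi_\varepsilon|\ge g_0>0$ --- so no estimate that only tracks powers of $\varepsilon$ and norms can prove it. Separately, a small slip in item (4): off the jump set the map $x\mapsto\varepsilon^\alpha[x/\varepsilon^\alpha]_{L_g}L_g$ has $x$-derivative $0$, not $\varepsilon^\alpha$; the factor $\varepsilon^\alpha$ comes from differentiating the unfolded argument with respect to $y_1$. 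Your conclusion there is nevertheless correct.
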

	

	From now on, we use the following rescaled norms in the thin open sets
	\begin{eqnarray*}
		&&\left|\left|\left|\varphi\right|\right|\right|_{L^p(R^\varepsilon)}=\varepsilon^{-1/p}\left|\left|\varphi\right|\right|_{L^p(R^\varepsilon)},\forall\varphi\in L^p(R^\varepsilon), 1\leq p< \infty,\\
		&&\left|\left|\left|\varphi\right|\right|\right|_{W^{1,p}(R^\varepsilon)}=\varepsilon^{-1/p}\left|\left|\varphi\right|\right|_{W^{1,p}(R^\varepsilon)}\forall\varphi\in W^{1,p}(R^\varepsilon), 1\leq p< \infty.
	\end{eqnarray*} 
	For completeness, we may denote $\left|\left|\left|\varphi\right|\right|\right|_{L^\infty(R^\varepsilon)}=\left|\left|\varphi\right|\right|_{L^\infty(R^\varepsilon)}$.
	\\

	Now we apply a reiterated homogenization argument to introduce another unfolding operator to deal with the concentrated terms. 
	For this sake, let us set, for $x \in (0,1)$, 
	\begin{eqnarray}\label{Ystarx}
	Y^*_\varepsilon(x) & = & \left\lbrace (y_1,y_2) \in \mathbb{R}^2  \, : \, 0<y_1<L_g, \right. \nonumber  \\ 
	& & \qquad \qquad \left. g(y_1)-\varepsilon^\gamma h\left( \frac{(\varepsilon^\alpha \left[\frac{x}{\varepsilon^\alpha}\right]_{L_g} L_g+\varepsilon^\alpha y_1)  } {\varepsilon^\beta} \right)<y_2<g(y_1)\right\rbrace.
	\end{eqnarray}
	Notice that $Y^*_\varepsilon(x) \subset Y^*$ and it is kind of a narrow strip in the representative cell $Y^*$.
	
	Analogously as before, we also define
	\begin{equation*}
	I^h_\varepsilon=\mbox{ Int }\left(\bigcup_{k=0}^{N_\varepsilon^h}\left[kL_h\varepsilon^\beta,(k+1)L_h\varepsilon^\beta\right]\right), 
	\end{equation*}
	where $N_\varepsilon^h$ is the largest integer such that $\varepsilon^\beta L_h(N_\varepsilon^h+1)\leq 1$,  
	$$
	\begin{gathered}
	\Lambda_\varepsilon^h=(0,1)\backslash I^h_\varepsilon=[\varepsilon^\beta L_h(N^h_\varepsilon+1),1), \\
	O^\varepsilon_0=\left\lbrace(x,y)\in \mathbb{R}^2:x\in I_\varepsilon, \varepsilon\left(g\left(\frac{x}{\varepsilon^\alpha}\right)-\varepsilon^\gamma h\left(\frac{x}{\varepsilon^\beta}\right)\right) <y<\varepsilon  g\left(\frac{x}{\varepsilon^\alpha}\right)\right\rbrace, \\
	O^\varepsilon_1=\left\lbrace(x,y)\in \mathbb{R}^2:x\in \Lambda_\varepsilon, \varepsilon\left(g\left(\frac{x}{\varepsilon^\alpha}\right)-\varepsilon^\gamma h\left(\frac{x}{\varepsilon^\beta}\right)\right)<y<\varepsilon  g\left(\frac{x}{\varepsilon^\alpha}\right)\right\rbrace.
	\end{gathered}
	$$
	
	\begin{remark} Notice that $\Lambda^h_\varepsilon=\emptyset$ as $\varepsilon^\beta L_h(N^h_\varepsilon+1)=1$, and then  $R_0^\varepsilon=R^\varepsilon$, $R_1^\varepsilon=\emptyset$, $O_0^\varepsilon=O^\varepsilon$ and $O_1^\varepsilon=\emptyset$. 
	\end{remark}

	Then, 
	\begin{equation*}
	\begin{gathered}
	\dfrac{1}{L_g\varepsilon^\gamma}\int_{(0,1)\times Y^*_\varepsilon(x)}\mathcal{T}_\varepsilon \varphi dxdY= \dfrac{1}{L_g\varepsilon^\gamma}\int_{I_\varepsilon\times Y^*_\varepsilon(x)}\varphi\left(\varepsilon^\alpha\left[\dfrac{x}{\varepsilon^\alpha}\right]_{L_g}L_g+\varepsilon^\alpha y_1,\varepsilon y_2\right)dYdx\\
	= \dfrac{1}{L_g\varepsilon^\gamma}\sum_{k=0}^{N_\varepsilon-1}\int_{k\varepsilon^\alpha L_g}^{(k+1)\varepsilon^\alpha L_g}\int_{Y^*_\varepsilon(x)}\varphi\left(\varepsilon^\alpha\left[\dfrac{x}{\varepsilon^\alpha}\right]_{L_g}L_g+\varepsilon^\alpha y_1,\varepsilon y_2\right)dYdx\\=\dfrac{1}{L_g\varepsilon^\gamma}\sum_{k=0}^{N_\varepsilon-1}\int_{k\varepsilon^\alpha L_g}^{(k+1)\varepsilon^\alpha L_g}\int_0^{L_g}\int_{g(y_1)-\varepsilon^\gamma h\left(\dfrac{\varepsilon^\alpha k L_g+\varepsilon^\alpha y_1 }{\varepsilon^\beta}\right)}^{g(y_1)}\varphi\left(\varepsilon^\alpha kL_g+\varepsilon^\alpha y_1,\varepsilon y_2\right)dy_2dy_1dx\\=\dfrac{1}{\varepsilon^\gamma}\sum_{k=0}^{N_\varepsilon-1}\int_0^{L_g}\int_{g(y_1)-\varepsilon^\gamma h\left(\dfrac{\varepsilon^\alpha k L_g+\varepsilon^\alpha y_1 }{\varepsilon^\beta}\right)}^{g(y_1)}\varphi\left(\varepsilon^\alpha kL_g+\varepsilon^\alpha y_1,\varepsilon y_2\right)\varepsilon^\alpha dy_2dy_1.
	\end{gathered}
	\end{equation*} 
	Thus, performing the change of variables  
	$x=\varepsilon^\alpha kL_g+\varepsilon^\alpha y_1$ and $y= \varepsilon y_2$,
	we obtain
	\begin{align*}
	\dfrac{1}{\varepsilon^\gamma}\sum_{k=0}^{N_\varepsilon-1}\int_0^{L_g}\int_{g(y_1)-\varepsilon^\gamma h\left(\dfrac{\varepsilon^\alpha k L_g+\varepsilon^\alpha y_1 }{\varepsilon^\beta}\right)}^{g(y_1)}\varphi\left(\varepsilon^\alpha kL_g+\varepsilon^\alpha y_1,\varepsilon y_2\right)\varepsilon^\alpha dy_2dy_1 = \\  =\dfrac{1}{\varepsilon^{\gamma+1}}\sum_{k=0}^{N_\varepsilon-1}\int_{k\varepsilon^\alpha L_g}^{(k+1)\varepsilon^\alpha L_g} \int_{\varepsilon\left[g\left(\dfrac{x}{\varepsilon^\alpha}\right)-\varepsilon^\gamma h\left(\dfrac{\varepsilon^\alpha kL_g+\varepsilon^\alpha y_1}{\varepsilon^\beta}\right)\right]}^{\varepsilon g\left(\dfrac{x}{\varepsilon^\alpha}\right)} \varphi (x,y) dx dy \\  = \dfrac{1}{\varepsilon^{\gamma+1}}\int_{\mathcal{O}^\varepsilon_0}\varphi dxdy=\dfrac{1}{\varepsilon^{\gamma+1}}\int_{\mathcal{O}^\varepsilon}\varphi dxdy-\dfrac{1}{\varepsilon^{\gamma+1}}\int_{\mathcal{O}^\varepsilon_1}\varphi dxdy.
	\end{align*}
	
	Therefore, we can rewrite the concentrated integral in terms of the unfolding operator. Indeed, we have proved the following result:
	\begin{proposition}\label{first_unf}
		If $\varphi\in L^1(R^\varepsilon)$, then
		$$
		\dfrac{1}{\varepsilon^{\gamma+1}}\int_{\mathcal{O}^\varepsilon}\varphi dxdy=\dfrac{1}{L_g\varepsilon^\gamma}\int_{(0,1)\times Y^*_\varepsilon(x)}\mathcal{T}_\varepsilon \varphi dxdY+\dfrac{1}{\varepsilon^{\gamma+1}}\int_{\mathcal{O}^\varepsilon_1}\varphi dxdy.
		$$\,
	\end{proposition}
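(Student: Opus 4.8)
The plan is to establish the identity by a direct computation that unwinds the definition of the unfolding operator $\mathcal{T}_\varepsilon$ together with the periodic tiling of $(0,1)$ by the cells $(k\varepsilon^\alpha L_g,(k+1)\varepsilon^\alpha L_g)$, in exactly the spirit of property (2) of Proposition \ref{unfoldingproperties}, but now adapted to the $x$-dependent narrow strip $Y^*_\varepsilon(x)$. I would start from the term $\frac{1}{L_g\varepsilon^\gamma}\int_{(0,1)\times Y^*_\varepsilon(x)}\mathcal{T}_\varepsilon\varphi\,dx\,dY$ and immediately discard the contribution of $\Lambda_\varepsilon\times Y^*$, since $\mathcal{T}_\varepsilon\varphi$ vanishes there by definition, thereby reducing the $x$-integration to $I_\varepsilon=\bigcup_k(k\varepsilon^\alpha L_g,(k+1)\varepsilon^\alpha L_g)$ and splitting the integral into a finite sum over $k$.

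The key observation, which I would record next, is that on each cell the integer part $[x/\varepsilon^\alpha]_{L_g}$ is the constant $k$; hence both the integrand $\mathcal{T}_\varepsilon\varphi(x,y_1,y_2)=\varphi(\varepsilon^\alpha k L_g+\varepsilon^\alpha y_1,\varepsilon y_2)$ and the strip $Y^*_\varepsilon(x)$ become independent of $x$ within that cell. This decouples the $x$-variable: the $x$-integration merely produces the cell width $\varepsilon^\alpha L_g$, which absorbs the factor $L_g$ in the prefactor and leaves $\frac{\varepsilon^\alpha}{\varepsilon^\gamma}\sum_k\int_0^{L_g}\int_{g(y_1)-\varepsilon^\gamma h(\cdot)}^{g(y_1)}\varphi(\varepsilon^\alpha k L_g+\varepsilon^\alpha y_1,\varepsilon y_2)\,dy_2\,dy_1$.

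I would then perform the change of variables $x=\varepsilon^\alpha k L_g+\varepsilon^\alpha y_1$, $y=\varepsilon y_2$, whose Jacobian contributes $\varepsilon^{-(\alpha+1)}$ and so turns the prefactor $\varepsilon^\alpha/\varepsilon^\gamma$ into $1/\varepsilon^{\gamma+1}$. The heart of the argument is to verify that under this substitution the cross-section of $Y^*_\varepsilon(x)$ is mapped exactly onto the cross-section of $\mathcal{O}^\varepsilon$ at the abscissa $x$: the $L_g$-periodicity of $g$ gives $g(y_1)=g(x/\varepsilon^\alpha)$, while the argument of $h$ collapses from $(\varepsilon^\alpha k L_g+\varepsilon^\alpha y_1)/\varepsilon^\beta$ to $x/\varepsilon^\beta$. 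Reassembling the sum over $k$ then yields precisely $\frac{1}{\varepsilon^{\gamma+1}}\int_{\mathcal{O}^\varepsilon_0}\varphi\,dx\,dy$, and writing $\mathcal{O}^\varepsilon_0=\mathcal{O}^\varepsilon\setminus\mathcal{O}^\varepsilon_1$ delivers the stated equality.

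The main obstacle I anticipate is not analytic but one of bookkeeping: one must align the two distinct oscillation scales $\varepsilon^\alpha$, governing $g$ and the cell partition, and $\varepsilon^\beta$, governing $h$ and the strip profile, so that the floor-function argument inside $h$ genuinely reduces to $x/\varepsilon^\beta$ after the substitution, and one must simultaneously track the three powers of $\varepsilon$ coming from the prefactor, the cell width, and the Jacobian without error. Some care is also needed at the last partial cell near $x=1$, but this is exactly the region absorbed into the remainder $\frac{1}{\varepsilon^{\gamma+1}}\int_{\mathcal{O}^\varepsilon_1}\varphi$; since the assertion is an exact equality rather than a limit, no estimate on that term is required at this stage and it is simply carried along.
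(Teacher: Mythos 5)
Your proposal is correct and follows essentially the same route as the paper's own proof: restrict the unfolded integral to $I_\varepsilon$ (where $\mathcal{T}_\varepsilon\varphi$ is nonzero), split into the cells $(k\varepsilon^\alpha L_g,(k+1)\varepsilon^\alpha L_g)$ on which $[x/\varepsilon^\alpha]_{L_g}=k$ is constant, let the $x$-integration produce the factor $\varepsilon^\alpha L_g$, and undo the unfolding via the substitution $x=\varepsilon^\alpha kL_g+\varepsilon^\alpha y_1$, $y=\varepsilon y_2$, identifying the result with $\frac{1}{\varepsilon^{\gamma+1}}\int_{\mathcal{O}^\varepsilon_0}\varphi\,dx\,dy$ and then writing $\mathcal{O}^\varepsilon_0=\mathcal{O}^\varepsilon\setminus\mathcal{O}^\varepsilon_1$. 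Your bookkeeping of the powers of $\varepsilon$, the use of the $L_g$-periodicity of $g$, and the exact collapse of the argument of $h$ to $x/\varepsilon^\beta$ all match the paper's computation preceding the proposition.
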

		
%
	Next, we see how we can rewrite $Y_\varepsilon^*(x)$ in a simpler way.
	\begin{proposition}\label{Propaux01} 
		
		
		If $\varphi(x,y_1,\cdot)\in C(0,g(y_1))$ a.e. in $(0,1)\times(0,L_g)$, then
		$$
		\dfrac{1}{\varepsilon^\gamma}\left|\int_{g(y_1)-\varepsilon^\gamma h\left(\frac{\varepsilon^\alpha [x/\varepsilon^\alpha]L_g+\varepsilon^\alpha y_1 }{\varepsilon^\beta}\right)}^{g(y_1)}\varphi(x,y_1,y_2)dy_2-	\int_{g(y_1)-\varepsilon^\gamma h\left(\frac{x}{\varepsilon^\beta}\right)}^{g(y_1)}\varphi(x,y_1,y_2)dy_2\right|\to 0
		$$
		a.e. in $(0,1)\times(0,L_g)$ as $\varepsilon\to 0$.
	\end{proposition}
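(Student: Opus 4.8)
The plan is to reduce the statement to a quantitative comparison of the two arguments of $h$ and then invoke the continuity of $\varphi$. Abbreviate $a = \varepsilon^\alpha [x/\varepsilon^\alpha]_{L_g} L_g + \varepsilon^\alpha y_1$, $H_1 = h(a/\varepsilon^\beta)$ and $H_2 = h(x/\varepsilon^\beta)$, and write $I_j = \int_{g(y_1)-\varepsilon^\gamma H_j}^{g(y_1)} \varphi(x,y_1,y_2)\,dy_2$ for $j=1,2$, so that the quantity in the statement is $\varepsilon^{-\gamma}|I_1-I_2|$. Since the two integrals share the upper limit $g(y_1)$ and differ only in their lower limits, their difference is the integral of $\varphi$ over the slab lying between $g(y_1)-\varepsilon^\gamma H_1$ and $g(y_1)-\varepsilon^\gamma H_2$, whose length is $\varepsilon^\gamma|H_1-H_2|$. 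Bounding that slab integral by its length times the supremum of $|\varphi|$ over it gives at once
\[
\frac{1}{\varepsilon^\gamma}\,|I_1-I_2| \;\leq\; |H_1-H_2|\,\sup_{y_2}|\varphi(x,y_1,y_2)|,
\]
so everything reduces to showing $|H_1-H_2|\to 0$ while the supremum stays bounded.

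First I would estimate $|H_1-H_2|$. From the decomposition $x = \varepsilon^\alpha[x/\varepsilon^\alpha]_{L_g}L_g + \varepsilon^\alpha\{x/\varepsilon^\alpha\}_{L_g}$ with $\{x/\varepsilon^\alpha\}_{L_g}\in[0,L_g)$ one gets $a-x = \varepsilon^\alpha(y_1-\{x/\varepsilon^\alpha\}_{L_g})$, hence $|a-x|\leq \varepsilon^\alpha L_g$ because $y_1\in(0,L_g)$. As $h$ has bounded derivative, the mean value theorem yields
\[
|H_1-H_2| = \left|h\!\left(\frac{a}{\varepsilon^\beta}\right) - h\!\left(\frac{x}{\varepsilon^\beta}\right)\right| \leq \|h'\|_\infty\,\frac{|a-x|}{\varepsilon^\beta} \leq \|h'\|_\infty\,L_g\,\varepsilon^{\alpha-\beta}.
\]
This is exactly where the standing hypothesis $0<\beta<\alpha$ is used: it makes $\varepsilon^{\alpha-\beta}\to 0$, so $|H_1-H_2|\to 0$ as $\varepsilon\to 0$. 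Were $\alpha\leq\beta$, the two arguments of $h$ need not get close and the argument would break down, which is precisely the source of the restriction $0<\beta<\alpha$ imposed throughout.

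It remains to keep the supremum bounded, and this is the point I expect to be the only real obstacle. Since $h$ is continuous, positive and periodic, $H_1,H_2\in[h_0,h_1]$ with $0<h_0=\min h \leq \max h = h_1<\infty$, so for $\varepsilon$ small the slab of integration sits inside the compact interval $[g(y_1)-\varepsilon^\gamma h_1,\,g(y_1)-\varepsilon^\gamma h_0]\subset(0,g(y_1))$, on which $\varphi(x,y_1,\cdot)$ is continuous and hence bounded. The subtlety is that this interval drifts toward the endpoint $y_2=g(y_1)$ as $\varepsilon\to 0$, so with mere continuity on the open interval the supremum could a priori grow; one removes this by observing that the relevant boundary trace $\varphi(x,y_1,g(y_1)^-)$ is finite (in the applications $\varphi=\mathcal{T}_\varepsilon\psi$ is the unfolding of a fixed smooth test function, hence continuous up to $y_2=g(y_1)$), whence the supremum is uniformly bounded for small $\varepsilon$. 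Combining this with the bound of the previous paragraph shows $\varepsilon^{-\gamma}|I_1-I_2|\to 0$ for a.e.\ $(x,y_1)\in(0,1)\times(0,L_g)$, which is the assertion.
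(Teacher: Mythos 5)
Your proof is correct and follows essentially the same route as the paper's: both reduce the difference to an integral over the slab of length $\varepsilon^\gamma|H_1-H_2|$ and then use the Lipschitz bound $|H_1-H_2|\leq \|h'\|_\infty |a-x|/\varepsilon^\beta = O(\varepsilon^{\alpha-\beta})$ together with $\alpha>\beta$, the only cosmetic difference being that the paper rescales the slab to $(0,1)$ by an explicit change of variables where you use a length-times-supremum bound. Your closing remark on the boundedness of $\varphi$ near $y_2=g(y_1)$ addresses a point the paper's proof passes over silently (its rescaled integral likewise needs the average of $|\varphi|$ over the drifting slab to remain bounded), so flagging it is an improvement, not a deviation.
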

	\begin{proof} 
		Let 
		$$
		\varrho_\varepsilon=\varrho_\varepsilon(x,y_1,\varepsilon)=\varepsilon^\alpha [x/\varepsilon^\alpha]L_g+\varepsilon^\alpha y_1,  
		\qquad h_\varepsilon(\cdot)=h\left(\dfrac{\cdot}{\varepsilon^\beta}\right).
		$$
		Suppose, without loss of generality, that 
		$$
		h\left(\frac{\varepsilon^\alpha [x/\varepsilon^\alpha]L_g+\varepsilon^\alpha y_1 }{\varepsilon^\beta}\right)\geq  h\left(\frac{x}{\varepsilon^\beta}\right).
		$$
		Notice that
		$$
		\begin{gathered}
		\dfrac{1}{\varepsilon^\gamma}\left|\int_{g(y_1)-\varepsilon^\gamma h_\varepsilon(\varrho_\varepsilon)}^{g(y_1)}\varphi(x,y_1,y_2)dy_2-	\int_{g(y_1)-\varepsilon^\gamma h_\varepsilon(x)}^{g(y_1)}\varphi(x,y_1,y_2)dy_2\right|
		= \dfrac{1}{\varepsilon^\gamma}\left|\int_{g(y_1)-\varepsilon^\gamma h_\varepsilon(\varrho_\varepsilon)}^{g(y_1)-\varepsilon^\gamma h_\varepsilon(x)}\varphi(x,y_1,y_2)dy_2\right| 
		\end{gathered}
		$$
		Consider the change of variables
		$$
		z_2=\dfrac{y_2-g(y_1)+\varepsilon^\gamma h_\varepsilon(\varrho_\varepsilon)}{\varepsilon^\gamma(h_\varepsilon(\varrho_\varepsilon)-h_\varepsilon(x))}, \qquad dz_2=\dfrac{dy_2}{\varepsilon^\gamma(h_\varepsilon(\varrho_\varepsilon)-h_\varepsilon(x))}. 
		$$
		Then,
		
		
		\begin{equation*}
		\begin{gathered}
		\dfrac{1}{\varepsilon^\gamma}\left|\int_{g(y_1)-\varepsilon^\gamma h_\varepsilon(\varrho_\varepsilon)}^{g(y_1)-\varepsilon^\gamma h_\varepsilon(x)}\varphi(x,y_1,y_2)dy_2\right| =\\
		=\left|\int_0^1\varphi\left(x,y_1,\varepsilon^\gamma(h_\varepsilon(\varrho_\varepsilon)-h_\varepsilon(x))z_2+g(y_1)-\varepsilon^\gamma h_\varepsilon(\varrho_\varepsilon)\right) (h_\varepsilon(\varrho_\varepsilon)-h_\varepsilon(x)) dz_2\right|\\
		\leq \int_0^1\left|\varphi\left(x,y_1,\varepsilon^\gamma(h_\varepsilon(\varrho_\varepsilon)-h_\varepsilon(x))z_2+g(y_1)-\varepsilon^\gamma h_\varepsilon(\varrho_\varepsilon)\right)\right| ||h'||_{\infty} \dfrac{\left|\varrho_\varepsilon-x\right|}{\varepsilon^\beta} dz_2.
		\end{gathered}
		\end{equation*}
		Now, since  $\alpha> \beta$ and $\varepsilon^\alpha [x/\varepsilon^\alpha]L_g+\varepsilon^\alpha y_1$ satisfies \eqref{Ox}, we have that
		$$
		\begin{gathered}
		\dfrac{1}{\varepsilon^\gamma}\left|\int_{g(y_1)-\varepsilon^\gamma h_\varepsilon(\varrho_\varepsilon)}^{g(y_1)-\varepsilon^\gamma h_\varepsilon(x)}\varphi(x,y_1,y_2)dy_2\right|\to 0,
		\end{gathered}
		$$
		as $\varepsilon\to 0$, proving the result.
	\end{proof}

\begin{remark}		One could also suppose that $h$ is uniformly continuous in Proposition \ref{Propaux01} with minor changes in the proof. \end{remark} 

	\begin{remark}
		By Proposition \ref{Propaux01}, we can rewrite \eqref{Ystarx}, without loss of generality, as
		\begin{equation}\label{Ystarx01}
		Y^*_\varepsilon(x)=\left\lbrace (y_1,y_2)\in\mathbb{R}^2:0<y_1<L_g,g(y_1) - \varepsilon^\gamma h\left(\dfrac{x}{\varepsilon^\beta} \right)<y_2<g(y_1)    \right\rbrace
		\end{equation}
		assuming $\varepsilon>0$ is small enough. 
	\end{remark}

	\begin{remark} \label{bmam0}
		In Proposition \ref{Propaux01}, we strongly use $\alpha>\beta$. To analyze $\beta>\alpha$, we need to define a different subset $Y_\varepsilon^*(x)$, modifying the change of variables in the proof of \ref{first_unf} which would lead us to a different unfolding operator. Since the analysis is analogous, we will focus here on $Y_\varepsilon^*$ defined in \eqref{Ystarx01} with $\alpha > \beta>0$. We point out that, in this other case, the hypothesis  on $g$ and $h$ would be interchanged.
	\end{remark}
\begin{remark}  Notice that case $\alpha = \beta>0$, we can perform the analysis when the periods of $g$ and $h$ are rationally dependent, that is, there exists $\kappa\in\mathbb{N}$ such that $L_h=\kappa L_g$.  In this case, we do not even need results related to the iterated unfolding technique we describe bellow.
\end{remark} 

	We also have the following results.
	\begin{proposition}\label{bound}
		If $\varphi_\varepsilon\in W^{1,p}(R^\varepsilon)$, there exists $C>0$ independent of $\varepsilon$ such that, for $1-1/p<s\leq 1$,
		$$
		\dfrac{1}{\varepsilon^{\gamma}}\int_{\mathcal{O}^\varepsilon}|\varphi_\varepsilon|^qdxdy 
		\leq C \|\varphi_\varepsilon\|^q_{L^q((0,1);W^{s,p}(0,\varepsilon g(x/\varepsilon^\alpha))}, \ \forall q\geq 1. 
		$$
		In particular, if $p\geq2$ we have
		$$
		\dfrac{1}{\varepsilon^{\gamma}}\int_{\mathcal{O}^\varepsilon}|\varphi_\varepsilon|^q dxdy\leq C \|\varphi_\varepsilon\|^q_{W^{1,p}(R^\varepsilon)}, \ \forall q\leq p. $$
		\end{proposition}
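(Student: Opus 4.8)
The plan is to slice the concentrated integral along the vertical fibers, prove a one-dimensional concentration estimate on each fiber with a constant uniform in the base point $x$ and in $\varepsilon$, and then reassemble. By Fubini,
$$\frac{1}{\varepsilon^\gamma}\int_{\mathcal{O}^\varepsilon}|\varphi_\varepsilon|^q\,dxdy=\int_0^1\left(\frac{1}{\varepsilon^\gamma}\int_{\varepsilon[g(x/\varepsilon^\alpha)-\varepsilon^\gamma h(x/\varepsilon^\beta)]}^{\varepsilon g(x/\varepsilon^\alpha)}|\varphi_\varepsilon(x,y)|^q\,dy\right)dx,$$
and by the slicing property of Sobolev functions the restriction $u=\varphi_\varepsilon(x,\cdot)$ belongs to $W^{1,p}(0,\varepsilon g(x/\varepsilon^\alpha))\hookrightarrow W^{s,p}(0,\varepsilon g(x/\varepsilon^\alpha))$ for a.e.\ $x\in(0,1)$. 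Thus it suffices to establish, writing $H=\varepsilon g(x/\varepsilon^\alpha)$ and $d=\varepsilon^{1+\gamma}h(x/\varepsilon^\beta)$, the fiberwise bound
$$\frac{1}{\varepsilon^\gamma}\int_{H-d}^{H}|u|^q\,dy\le C\,\|u\|_{W^{s,p}(0,H)}^q$$
with $C$ independent of $x$ and $\varepsilon$; integrating this over $x\in(0,1)$ reproduces exactly the right-hand side, and the bounds $0<g_0\le g\le g_1$ together with $h>0$ bounded make the constant uniform in $x$.

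To prove the fiberwise bound I would rescale the fiber to unit length through $y=H\tau$, $v(\tau)=u(H\tau)$, so that the concentration sliver corresponds to $\tau\in(1-\delta,1)$ with $\delta=d/H=\varepsilon^\gamma h/g$. The condition $s>1-1/p$ gives $sp>p-1$, so that $sp>1$ whenever $p\ge 2$ (and trivially for $s=1$ and any $p>1$); in that range the one-dimensional Morrey embedding $W^{s,p}(0,1)\hookrightarrow C^{0,s-1/p}(0,1)\hookrightarrow L^\infty(0,1)$ holds and produces $\int_{1-\delta}^1|v|^q\,d\tau\le\delta\,\|v\|_{L^\infty(0,1)}^q\le C\delta\,\|v\|_{W^{s,p}(0,1)}^q$. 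Undoing the change of variables, $\int_{H-d}^H|u|^q\,dy=H\int_{1-\delta}^1|v|^q\,d\tau$, while the physical norm transforms by $\|u\|_{L^p(0,H)}^p=H\|v\|_{L^p(0,1)}^p$ and $[u]_{W^{s,p}(0,H)}^p=H^{1-sp}[v]_{W^{s,p}(0,1)}^p$; substituting these relations turns the desired inequality into a comparison of explicit powers of $H\sim\varepsilon$ and $\delta\sim\varepsilon^\gamma$.

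The crux of the argument is this bookkeeping of powers of $\varepsilon$, and it is the step I expect to be most delicate. The normalising factor $1/\varepsilon^\gamma$ is exactly compensated by the width $d\sim\varepsilon^{1+\gamma}$ of the strip, leaving one net power of $H\sim\varepsilon$ coming from the Jacobian of the rescaling, which must be balanced against the $H$-weights carried by the fractional norm on $(0,H)$. A quick test with $\varphi_\varepsilon\equiv1$, for which the left-hand side is of order $\varepsilon$ and $\|1\|_{W^{s,p}(0,\varepsilon g)}^q$ of order $\varepsilon^{q/p}$, already shows that uniform boundedness forces the compatibility $q\le p$: for $q\le p$ the $L^p$-part contributes the harmless factor $\varepsilon^{1-q/p}\le 1$, whereas the seminorm part contributes $\varepsilon^{1+(s-1/p)q}$, which stays bounded precisely because $s>1-1/p$. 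This is where both the restriction on $s$ and the relation between $q$ and $p$ originate.

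Finally, the particular case for $p\ge 2$ follows by specialising to $s=1$, which satisfies $s>1-1/p$ for every $p>1$, and by comparing the sliced norm with the full Sobolev norm on $R^\varepsilon$. Indeed the one-dimensional slicing inequality gives $\int_0^1\|\varphi_\varepsilon(x,\cdot)\|_{W^{1,p}(0,\varepsilon g)}^p\,dx\le\|\varphi_\varepsilon\|_{W^{1,p}(R^\varepsilon)}^p$, and since $q\le p$ the inclusion $L^p(0,1)\hookrightarrow L^q(0,1)$ yields $\|\varphi_\varepsilon\|_{L^q((0,1);W^{1,p}(0,\varepsilon g))}\le\|\varphi_\varepsilon\|_{W^{1,p}(R^\varepsilon)}$; combined with the general estimate this gives the stated bound.
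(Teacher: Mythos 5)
Your argument is correct in the range where the statement itself is correct, but it is a genuinely different proof from the paper's, and it uncovers an overstatement in the proposition. The paper does not prove the core concentration inequality at all: the first estimate is obtained by citing \cite[Theorem 3.7]{AMA-thin} with $H^s$ replaced by $W^{s,p}$, and the second follows from the chain $W^{1,p}(R^\varepsilon)\subset L^p((0,1);W^{1,p}(0,\varepsilon g(x/\varepsilon^\alpha)))\subset L^p((0,1);W^{s,p}(0,\varepsilon g(x/\varepsilon^\alpha)))$ together with H\"older's inequality in $x$ to lower the Bochner exponent from $p$ to $q\leq p$. You instead reconstruct the content hidden in that citation: Fubini slicing, rescaling each vertical fiber to unit length, the one-dimensional embedding $W^{s,p}(0,1)\hookrightarrow L^\infty(0,1)$ valid for $sp>1$, and explicit bookkeeping of the powers of $\varepsilon$. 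Two things come out of this that the paper's presentation obscures. First, your route needs $sp>1$, which follows from $s>1-1/p$ only when $p\geq 2$ (or trivially when $s=1$); the same restriction is implicit in the paper, whose parallel result on the cell (Proposition \ref{propboundnessuci}) rests on a pointwise trace bound requiring $sp>1$, and this is presumably why the second assertion carries the hypothesis $p\geq 2$. Second, and this is a genuine finding, your test with $\varphi_\varepsilon\equiv 1$ shows that the first inequality cannot hold with a constant independent of $\varepsilon$ when $q>p$: the left-hand side is of order $\varepsilon$ while the right-hand side is of order $\varepsilon^{q/p}$, so the clause ``$\forall q\geq 1$'' in the statement should read $1\leq q\leq p$, and your power counting (the factor $\varepsilon^{1-q/p}$ coming from the $L^p$ part of the rescaled fiber norm) identifies exactly where the restriction originates. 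This flaw is harmless for the paper, since every application of the proposition (Propositions \ref{uniformbounds}, \ref{uciO1} and \ref{f}) uses $q=p$ or $q=p'\leq p$, consistent with the second assertion; still, your proof is the more informative one, as it both replaces an external citation by a self-contained argument and delimits the correct range of exponents.
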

	\begin{proof}
 The first part is true by changing the $H^s$ space by $W^{s,p}$, for $1-1/p<s\leq 1$, in \cite[Theorem 3.7]{AMA-thin}, which implies that exists $C>0$ independent of $\varepsilon>0$ such that
	$$
	\dfrac{1}{\varepsilon^{\gamma}}\int_{\mathcal{O}^\varepsilon}|\varphi_\varepsilon|^qdxdy\leq C  \|\varphi_\varepsilon\|^q_{L^q(0,1;W^{s,p}(0,\varepsilon g(x/\varepsilon^\alpha)))}, \ \forall q\geq 1. 
	$$
	Also, it is not difficult to see that $W^{1,p}(R^\varepsilon)$ is included in $L^p(0,1;W^{1,p}(0,\varepsilon g(x/\varepsilon^\alpha)))$ and, consequently, in $L^p(0,1;W^{s,p}(0,\varepsilon g(x/\varepsilon^\alpha)))$ with constant independent of $\varepsilon>0$ analogously as \cite[Proposition 3.6]{AMA-thin}.
	
	On the other hand, if $p>2$, we have that $\frac{p}{q}>1$ for all $q< p$ and then, if we call $g_\varepsilon(x)=\varepsilon g(x/\varepsilon^\alpha)$,
	\begin{align*}
	\|\varphi_\varepsilon\|&^q_{L^q(0,1;W^{1,p}(0,g_\varepsilon(x)))}  = \int_0^1\left(\int_0^{g_\varepsilon(x)}|\varphi_\varepsilon(x,y)|^pdy\right)^{\frac{q}{p}}dx + \int_0^1\left(\int_0^{g_\varepsilon(x)}|\partial_x \varphi_\varepsilon(x,y)|^pdy\right)^{\frac{q}{p}}dx \\ &  \leq g_1^{\frac{p-q}{p}}\left(\int_0^1\int_0^{g_\varepsilon(x)}|\varphi_\varepsilon(x,y)|^pdxdy\right)^{\frac{q}{p}} + g_1^{\frac{p-q}{p}}\left(\int_0^1\int_0^{g_\varepsilon(x)}|\partial_x \varphi_\varepsilon(x,y)|^pdxdy\right)^{\frac{q}{p}} \leq C\|\varphi_\varepsilon\|^q_{W^{1,p}(R^\varepsilon)}
	\end{align*}		
\end{proof}

	Now, we proceed proving u.c.i. for integrals in $(0,1)\times Y^*_\varepsilon(x)$. 
	
	\begin{proposition}\label{uciO1}
		If $\varphi_\varepsilon\in W^{1,p}(R^\varepsilon)$ is uniformly bounded in the norm $|||\cdot|||_{W^{1,p}(R^\varepsilon)}$, then
		$$
		\dfrac{1}{\varepsilon^{\gamma+1}}\int_{\mathcal{O}^\varepsilon_1}|\varphi_\varepsilon| dxdy\to 0.
		$$
	\end{proposition}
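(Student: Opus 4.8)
The plan is to bypass the fractional–space machinery behind Proposition \ref{bound} and instead reduce the two–dimensional concentrated integral to a one–dimensional trace inequality on each vertical fibre, gaining the decisive smallness from the fact that the base $\Lambda_\varepsilon$ of $\mathcal{O}^\varepsilon_1$ has measure of order $\varepsilon^\alpha$. Writing $g_\varepsilon(x)=\varepsilon g(x/\varepsilon^\alpha)$ and $\delta_\varepsilon(x)=\varepsilon^{\gamma+1}h(x/\varepsilon^\beta)$, for $x\in\Lambda_\varepsilon$ the fibre of $\mathcal{O}^\varepsilon_1$ is the interval $(g_\varepsilon(x)-\delta_\varepsilon(x),g_\varepsilon(x))$, with $\varepsilon g_0\le g_\varepsilon(x)\le\varepsilon g_1$ and $0<\delta_\varepsilon(x)\le\varepsilon^{\gamma+1}\|h\|_\infty$. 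Since $\varphi_\varepsilon\in W^{1,p}(R^\varepsilon)$ with $p>1$, Fubini's theorem gives that $\varphi_\varepsilon(x,\cdot)$ is (the absolutely continuous representative of) an element of $W^{1,p}(0,g_\varepsilon(x))$ for a.e. $x$, so that the fibre integrals are meaningful.

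First I would establish the elementary fibre bound: for $u\in W^{1,p}(0,\ell)$ and every $y\in(0,\ell)$ one has $|u(y)|\le \ell^{-1}\int_0^\ell|u|\,dt+\int_0^\ell|u'|\,dt$, obtained by writing $u(y)=u(t)+\int_t^y u'$, integrating in $t$ over $(0,\ell)$ and applying H\"older. Taking $\ell=g_\varepsilon(x)$ and integrating over the fibre of length $\delta_\varepsilon(x)$ yields
$$
\int_{g_\varepsilon(x)-\delta_\varepsilon(x)}^{g_\varepsilon(x)}|\varphi_\varepsilon(x,y)|\,dy\le \delta_\varepsilon(x)\Big(g_\varepsilon(x)^{-1/p}\|\varphi_\varepsilon(x,\cdot)\|_{L^p(0,g_\varepsilon(x))}+g_\varepsilon(x)^{1/p'}\|\partial_y\varphi_\varepsilon(x,\cdot)\|_{L^p(0,g_\varepsilon(x))}\Big).
$$
Using $\varepsilon g_0\le g_\varepsilon(x)\le\varepsilon g_1$ and $\delta_\varepsilon(x)\le\varepsilon^{\gamma+1}\|h\|_\infty$, the two prefactors are bounded, uniformly in $x$, by $C\varepsilon^{\gamma+1-1/p}$ and $C\varepsilon^{\gamma+1+1/p'}$.

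Next I would integrate in $x$ over $\Lambda_\varepsilon$ and apply H\"older's inequality in the $x$ variable, using $|\Lambda_\varepsilon|\le L_g\,\varepsilon^\alpha$ together with $\int_{\Lambda_\varepsilon}\|\varphi_\varepsilon(x,\cdot)\|_{L^p(0,g_\varepsilon(x))}^p\,dx=\|\varphi_\varepsilon\|_{L^p(R^\varepsilon_1)}^p$, to obtain
$$
\int_{\mathcal{O}^\varepsilon_1}|\varphi_\varepsilon|\,dxdy\le C\varepsilon^{\gamma+1-1/p}|\Lambda_\varepsilon|^{1/p'}\|\varphi_\varepsilon\|_{L^p(R^\varepsilon_1)}+C\varepsilon^{\gamma+1+1/p'}|\Lambda_\varepsilon|^{1/p'}\|\partial_y\varphi_\varepsilon\|_{L^p(R^\varepsilon_1)}.
$$
The hypothesis that $|||\varphi_\varepsilon|||_{W^{1,p}(R^\varepsilon)}$ is uniformly bounded means precisely that $\|\varphi_\varepsilon\|_{L^p(R^\varepsilon)}$ and $\|\nabla\varphi_\varepsilon\|_{L^p(R^\varepsilon)}$ are $\le C\varepsilon^{1/p}$; bounding the $R^\varepsilon_1$–norms by the full $R^\varepsilon$–norms and recalling $1/p+1/p'=1$, both terms are $\le C\varepsilon^{\gamma+1}|\Lambda_\varepsilon|^{1/p'}$ (the second with an extra factor $\varepsilon$). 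Dividing by $\varepsilon^{\gamma+1}$ gives
$$
\frac{1}{\varepsilon^{\gamma+1}}\int_{\mathcal{O}^\varepsilon_1}|\varphi_\varepsilon|\,dxdy\le C|\Lambda_\varepsilon|^{1/p'}\le C\,\varepsilon^{\alpha/p'},
$$
which tends to $0$ as $\varepsilon\to0$, establishing the claim.

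The only delicate point is the bookkeeping of the powers of $\varepsilon$: the argument works because the trace inequality extracts the \emph{full} fibre width $\delta_\varepsilon(x)=O(\varepsilon^{\gamma+1})$, which exactly cancels the singular prefactor $\varepsilon^{-(\gamma+1)}$ and leaves only the gain $|\Lambda_\varepsilon|^{1/p'}=O(\varepsilon^{\alpha/p'})$ coming from the thinness of the leftover base $\Lambda_\varepsilon$. A cruder estimate loses this sharp linear dependence on $\delta_\varepsilon$: a direct H\"older bound $\int_{\mathcal{O}^\varepsilon_1}|\varphi_\varepsilon|\le|\mathcal{O}^\varepsilon_1|^{1/p'}\|\varphi_\varepsilon\|_{L^p(R^\varepsilon)}$ would give only $C\varepsilon^{\alpha-(\alpha+\gamma)/p}$, which forces the spurious restriction $\alpha(p-1)>\gamma$, whereas the statement must hold for every $\alpha>0$. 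Keeping all constants independent of $\varepsilon$ is guaranteed by the two–sided bound $g_0\le g\le g_1$ and by systematically working with the rescaled norm $|||\cdot|||_{W^{1,p}(R^\varepsilon)}$.
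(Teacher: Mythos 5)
Your proof is correct, but it follows a genuinely different route from the paper's. The paper's argument is a two-line application of H\"older's inequality with respect to the measure $\varepsilon^{-(\gamma+1)}dxdy$: it splits the concentrated integral into $\bigl(\tfrac{1}{\varepsilon^{\gamma+1}}\int_{\mathcal{O}^\varepsilon_1}|\varphi_\varepsilon|^p\,dxdy\bigr)^{1/p}\bigl(\tfrac{1}{\varepsilon^{\gamma+1}}|\mathcal{O}^\varepsilon_1|\bigr)^{1/p'}$, controls the first factor by the concentration estimate of Proposition \ref{bound} (which in turn rests on the fractional Sobolev trace-type inequality adapted from \cite{AMA-thin}), bounds the second by $(h_1|\Lambda_\varepsilon|)^{1/p'}$, and concludes from $|\Lambda_\varepsilon|\to 0$. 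You instead bypass Proposition \ref{bound} entirely: your elementary fibre-wise inequality $|u(y)|\le \ell^{-1/p}\|u\|_{L^p(0,\ell)}+\ell^{1/p'}\|u'\|_{L^p(0,\ell)}$ on a.e.\ vertical segment, followed by H\"older only in the $x$ variable, reproduces exactly the same bound $C\,|\Lambda_\varepsilon|^{1/p'}$ and the same rate $O(\varepsilon^{\alpha/p'})$, with correct bookkeeping of the powers (the fibre width $O(\varepsilon^{\gamma+1})$ cancelling the singular prefactor, and the derivative term carrying a harmless extra factor $\varepsilon$). What the paper's route buys is brevity and economy: Proposition \ref{bound} is needed elsewhere anyway (e.g.\ for the uniform bounds in Proposition \ref{uniformbounds}), so reusing it makes the proof essentially free. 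What your route buys is self-containedness — no $W^{s,p}$ machinery, no external citation, validity for all $1<p<\infty$ without any case distinction — and transparency about the scaling mechanism; your closing observation that a naive H\"older bound via $\|\varphi_\varepsilon\|_{L^p(R^\varepsilon)}$ would only yield $O(\varepsilon^{\alpha/p'-\gamma/p})$, forcing the spurious restriction $\alpha(p-1)>\gamma$, is accurate and in fact explains precisely why some trace-type ingredient (yours or the paper's Proposition \ref{bound}) is unavoidable.
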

	\begin{proof}
		It follows from 
		\begin{equation*}
		\begin{gathered}
		\dfrac{1}{\varepsilon^{\gamma+1}}\int_{\mathcal{O}^\varepsilon_1}|\varphi_\varepsilon| dxdy\leq \left(\dfrac{1}{\varepsilon^{\gamma+1}}\int_{\mathcal{O}^\varepsilon_1}|\varphi_\varepsilon|^p dxdy\right)^{1/p} \left(\dfrac{1}{\varepsilon^{\gamma+1}}\int_{\mathcal{O}^\varepsilon_1} 1 dxdy\right)^{1/p'}\\
		\leq C |||\varphi_\varepsilon|||_{W^{1,p}(R^\varepsilon)}  |\Lambda_\varepsilon|^{1/p'}
		\end{gathered}
		\end{equation*}
		since $\varphi_\varepsilon$ is uniformly bounded and $|\Lambda_\varepsilon| \to 0$ as $\varepsilon \to 0$ by Proposition \ref{bound}. 
	\end{proof}

	The next result is adapted from \cite[Theorem 3.7]{AMA-thin}. Again, we use a Lebesgue-Bochner space to analyze the concentrated integrals.

	\begin{proposition}\label{propboundnessuci}
		The following properties are satisfied:
			\item{(a)} If $\varphi \in L^p\left((0,1);W^{1,p}(Y^*)\right)$, then $$\dfrac{1}{\varepsilon^\gamma}\displaystyle\int_{(0,1)\times Y^*_\varepsilon(x)} |\varphi|^p dx dY\leq C||\varphi||^p_{L^p\left((0,1);W^{1,p}(Y^*)\right)},$$
			with $C>0$ independent of $x \in (0,1)$ and $\varepsilon>0$. \\
			\item{(b)} (u.c.i.) If $(\varphi_\varepsilon)\subset  L^p\left((0,1);W^{1,p}(Y^*)\right)$ is uniformly bounded, then
			$$
			\dfrac{1}{\varepsilon^\gamma}\displaystyle\int_{\Lambda_\varepsilon^h\times Y^*_\varepsilon(x)} |\varphi_\varepsilon| dx dY\to 0.
			$$
	\end{proposition}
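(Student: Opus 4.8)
The plan is to establish part (a) — a concentration estimate on the cell strip $Y^*_\varepsilon(x)$ — and then deduce the unfolding criterion (b) from (a) by a Hölder argument, exactly in the spirit of the proof of Proposition \ref{uciO1}.

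For (a), I would fix $x\in(0,1)$ and work with $w=\varphi(x,\cdot,\cdot)\in W^{1,p}(Y^*)$, using the simplified description \eqref{Ystarx01}, so the strip is $\{(y_1,y_2):0<y_1<L_g,\ g(y_1)-\varepsilon^\gamma h(x/\varepsilon^\beta)<y_2<g(y_1)\}$, of vertical thickness at most $\varepsilon^\gamma\|h\|_\infty$. For a.e. $y_1$ the slice $w(y_1,\cdot)$ lies in $W^{1,p}(0,g(y_1))$, and the fundamental theorem of calculus together with $g_0\le g(y_1)\le g_1$ yields the uniform pointwise bound
$$|w(y_1,t)|^p\le C\left(\int_0^{g(y_1)}|w(y_1,s)|^p\,ds+\int_0^{g(y_1)}|\partial_{y_2}w(y_1,s)|^p\,ds\right)$$
for every $t\in(0,g(y_1))$, with $C=C(g_0,g_1,p)$. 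Integrating this over the $y_2$-strip (length $\le\varepsilon^\gamma\|h\|_\infty$), then over $y_1$ and over $x$, the factor $\varepsilon^\gamma$ cancels the prefactor $1/\varepsilon^\gamma$ and one reaches the claimed inequality with $C$ independent of $x$ and $\varepsilon$. This is precisely the $W^{s,p}$-version of the trace/concentration estimate of \cite[Theorem 3.7]{AMA-thin} already invoked in Proposition \ref{bound}.

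For (b), I would apply Hölder's inequality in the form
$$\frac{1}{\varepsilon^\gamma}\int_{\Lambda_\varepsilon^h\times Y^*_\varepsilon(x)}|\varphi_\varepsilon|\,dx\,dY\le\left(\frac{1}{\varepsilon^\gamma}\int_{\Lambda_\varepsilon^h\times Y^*_\varepsilon(x)}|\varphi_\varepsilon|^p\,dx\,dY\right)^{1/p}\left(\frac{1}{\varepsilon^\gamma}\int_{\Lambda_\varepsilon^h\times Y^*_\varepsilon(x)}1\,dx\,dY\right)^{1/p'}.$$
The first factor is controlled by (a) — whose estimate is pointwise in $x$ and hence survives restricting the $x$-integral to $\Lambda_\varepsilon^h$ — together with the uniform bound on $\|\varphi_\varepsilon\|_{L^p((0,1);W^{1,p}(Y^*))}$, so it stays bounded. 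The second factor is explicit: since $|Y^*_\varepsilon(x)|=\varepsilon^\gamma L_g\,h(x/\varepsilon^\beta)$, it equals $\big(\int_{\Lambda_\varepsilon^h}L_g\,h(x/\varepsilon^\beta)\,dx\big)^{1/p'}\le(L_g\|h\|_\infty|\Lambda_\varepsilon^h|)^{1/p'}$, which tends to $0$ because $|\Lambda_\varepsilon^h|\le\varepsilon^\beta L_h\to 0$. Multiplying the two factors gives the conclusion.

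The essential work sits in (a); once that concentration bound is in hand, (b) is a routine vanishing-measure estimate mirroring Proposition \ref{uciO1}. The only point demanding care in (a) is uniformity of the constant in both $x$ and $\varepsilon$, which is guaranteed by the two-sided bound $g_0\le g\le g_1$ on the cell height and the boundedness of $h$; these keep the one-dimensional Sobolev/FTC constant from degenerating as the base point $x$ or the parameter $\varepsilon$ varies.
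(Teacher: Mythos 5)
Your proof is correct and follows essentially the same route as the paper's: a pointwise bound on the one-dimensional slices $\varphi(x,y_1,\cdot)\in W^{1,p}(0,g(y_1))$, integration over the strip of thickness $O(\varepsilon^\gamma)$ so that the factor $\varepsilon^\gamma$ cancels the prefactor $1/\varepsilon^\gamma$, and then H\"older's inequality together with $|\Lambda_\varepsilon^h|\to 0$ for part (b). The only difference is in the tool used for the slice bound: where you apply the elementary FTC/one-dimensional Sobolev embedding $W^{1,p}(I)\hookrightarrow L^\infty(I)$, the paper invokes Grisvard's embedding $W^{s,p}(I)\hookrightarrow L^\infty(I)$ for $1-1/p<s\leq 1$, which yields the same pointwise estimate (with a slightly sharper $W^{s,p}$ intermediate bound) before reducing to the $W^{1,p}$ norm.
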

	\begin{proof}
		\textit{(a) } For $\varphi\in L^p\left((0,1);W^{1,p}(Y^*)\right)$, we have $\varphi(x,y_1,\cdot)\in W^{1,p}(0,g(y_1))$ a.e. $(0,1)\times (0,L_g)$. Define 
		$$
		z^\varepsilon_1=g(y_1)-\varepsilon^\gamma h_1\mbox{ and }z^\varepsilon_0=g(y_1)-\varepsilon^\gamma h(x/\varepsilon^\beta)
		$$
		for $h_1 = \max_{x \in \mathbb{R}} h(x)$ and $\varepsilon>0$ small enough such that 
		$$
		[z^\varepsilon_0-z^\varepsilon_1,z^\varepsilon_0]\subset [0,g(y_1)].
		$$
		Notice that since $g(y_1)-\varepsilon^\gamma h(x/\varepsilon^\beta)<y_2<g(y_1)$, for $1-1/p<s\leq 1$, it follows from \cite[Theorem 1.5.1.3]{grisvard} for $n=1$ that there exists $K>0$ independent of $\varepsilon$ such that 
		$$
		|\varphi(x,y_1,y_2)|\leq K ||\varphi(x,y_1,\cdot)||_{W^{s,p}(y_2-z^\varepsilon_0,y_2)}\leq K ||\varphi(x,y_1,\cdot)||_{W^{s,p}(0,g(y_1))}.
		$$
		Hence,
		\begin{equation*}
		\begin{gathered}
		\dfrac{1}{\varepsilon^\gamma}\displaystyle\int_{(0,1)\times Y^*_\varepsilon(x)} |\varphi|^p dx dY\leq \int_0^1\int_0^{L_g}\dfrac{1}{\varepsilon^\gamma}\int_{g(y_1)-\varepsilon^\gamma h(x/\varepsilon^\beta)}^{g(y_1)} |\varphi(x,y_1,y_2)|^p dy_2 dy_1 dx\\
		\leq  \int_0^1\int_0^{L_g}\dfrac{1}{\varepsilon^\gamma}\int_{g(y_1)-\varepsilon^\gamma h(x/\varepsilon^\beta)}^{g(y_1)} K^p ||\varphi(x,y_1,\cdot)||^p_{W^{s,p}(0,g(y_1))} dy_2 dy_1 dx\\
		\leq C\int_0^1\int_0^{L_g}||\varphi(x,y_1,\cdot)||^p_{W^{s,p}(0,g(y_1))} dy_1 dx=C\int_0^1|| \varphi||^p_{L^p((0,L_g);W^{s,p}(0,g(y_1)))}dx\\
		\leq \bar{C}\int_0^1|| \varphi||^p_{W^{1,p}(Y^*)}dx\leq \bar{C}|| \varphi||^p_{L^p((0,1);W^{1,p}(Y^*))},
		\end{gathered}
		\end{equation*}
		where $W^{1,p}(Y^*)\subseteq L^p((0,L_g);W^{1,p}(0,g(y_1)))\subset L^p((0,L_g);W^{s,p}(0,g(y_1)))$.\\
		
		\noindent\textit{(b) } Using H\"older's inequality and item (a), one gets
		$$
		\dfrac{1}{\varepsilon^\gamma}\displaystyle\int_{\Lambda_\varepsilon^h\times Y^*_\varepsilon(x)} |\varphi_\varepsilon| dx dY\leq C|\Lambda_\varepsilon^h|^{1/p'}\cdot|| \varphi_\varepsilon||_{L^p((0,1);W^{1,p}(Y^*))}\to 0.
		$$
	\end{proof}

	Now, we are in conditions to introduce the following unfolding operator:
	\begin{definition}\label{unfoldingiterated}
		Let $\varphi$ be a measurable function in $(0,1)\times Y^*_\varepsilon(x)$. We define the unfolding operator $\mathcal{T}_\varepsilon^g$ as 
		\begin{eqnarray*}
			\mathcal{T}_\varepsilon^g\varphi(x,y_1,z_1,z_2)=\left\lbrace\begin{array}{ll}
				\varphi\left(\varepsilon^\beta\left[\frac{x}{\varepsilon^\beta}\right]_{L_h}L_h+\varepsilon^\beta z_1,y_1,\varepsilon^\gamma z_2+g(y_1)\right),\\ \qquad \qquad \qquad \mbox{ for }(x,y_1,z_1,z_2)\in I_\varepsilon^h\times (0,L_g)\times Y^*_h,\\[0.15cm]
				0, \mbox{ for }(x,y_1,z_1,z_2)\in \Lambda_\varepsilon^h\times  (0,L_g)\times Y^*_h
			\end{array}\right.
		\end{eqnarray*}
	where $Y^*_h=\left\lbrace \right(z_1,z_2) \in \mathbb{R}^2 : 0<z_1<L_h\mbox{ and }-h(z_1)<z_2<0\rbrace$.
	\end{definition}
	
	\begin{remark}
	\begin{itemize}
	\item[(a)] Notice that $\mathcal{T}_{\varepsilon}^{g}$ preserves the regularity of $\varphi$ as a function on $y_{1}$ variable.
	
	\item[(b)] The regularity of $\mathcal{T}_{\varepsilon}^{g}\varphi$ in the variable $z_{2}$ is inherited from $y_{2}$.  
	
	\item[(c)] Let $\psi$ be a function defined in $R^{\varepsilon}$. We know that $\mathcal{T}_{\varepsilon}\psi$ inherits the regularity from $\psi$ as a function of variable $(x,y)$ into variable $(y_{1},y_{2})$. Therefore $\mathcal{T}_{\varepsilon}^{g}(\mathcal{T}_{\varepsilon}\psi)$ will be a regular function on $y_{1}$ and $z_{2}$ variables. 
	\end{itemize}
	\end{remark} 
		Proceeding as in the proof of Proposition \ref{first_unf}, one can get
		\begin{equation*}
		\begin{gathered}
		\dfrac{1}{L_gL_h}\int_{(0,1)\times (0,L_g)\times Y^*_h}\mathcal{T}_\varepsilon^g(\mathcal{T}_\varepsilon \varphi) dxdy_1 dZ\\=\dfrac{1}{L_gL_h} \sum_{k=0}^{N_\varepsilon-1}\int_{k\varepsilon^\beta L_h}^{(k+1)\varepsilon^\beta L_h}\int_{0}^{L_g}\int_{0}^{L_h}\int_{-h(z_1)}^0 \mathcal{T}_\varepsilon \varphi \left(\varepsilon^\beta kL_h+\varepsilon^\beta z_1,y_1,\varepsilon^\gamma z_2+g(y_1)\right) dz_2 dz_1dy_1dx\\=\dfrac{L_h\varepsilon^\beta}{L_gL_h} \sum_{k=0}^{N_\varepsilon-1}\int_{0}^{L_g}\int_{0}^{L_h}\int_{-h(z_1)}^0 \mathcal{T}_\varepsilon \varphi \left(\varepsilon^\beta kL_h+\varepsilon^\beta z_1,y_1,\varepsilon^\gamma z_2+g(y_1)\right) dz_2 dz_1dy_1.
		\end{gathered}
		\end{equation*} 
			Hence, performing the change of variables 
			$$\begin{cases}
			x=\varepsilon^\beta kL_h+\varepsilon^\beta z_1 \\y_1 = y_1 \\ y_2 = \varepsilon^\gamma z_2 + g(y_1) 
			\end{cases}\Rightarrow  dy_2dxdy_1 = \varepsilon^{\beta+\gamma}dz_2dz_1dy_1$$
			we obtain
		\begin{equation*}
		\begin{gathered}  
		\dfrac{\varepsilon^\beta}{L_g} \sum_{k=0}^{N_\varepsilon-1}\int_{0}^{L_g}\int_{0}^{L_h}\int_{-h(z_1)}^0 \mathcal{T}_\varepsilon \varphi \left(\varepsilon^\beta kL_h+\varepsilon^\beta z_1,y_1,\varepsilon^\gamma z_2+g(y_1)\right) dz_2 dz_1dy_1	\\ = \dfrac{1}{\varepsilon^\gamma L_g} \sum_{k=0}^{N_\varepsilon-1}\int_{0}^{L_g}\int_{k\varepsilon^\beta L_h}^{(k+1)\varepsilon^\beta L_h}\int_{g(y_1)-\varepsilon^\gamma h(x/\varepsilon^\beta)}^{g(y_1)} \mathcal{T}_\varepsilon \varphi \left(x,y_1,y_2\right) dy_2 dx dy_1	\\ = \dfrac{1}{\varepsilon^\gamma L_g}\int_{(0,1)\times Y^*_\varepsilon(x)}\mathcal{T}_\varepsilon\varphi dx dY-\dfrac{1}{\varepsilon^\gamma L_g}\int_{\Lambda_\varepsilon^h\times Y^*_\varepsilon(x)}\mathcal{T}_\varepsilon\varphi dx dY.
		\end{gathered}
		\end{equation*}\\
	
		Consequently, we obtain the following propositions.			
			\begin{proposition}\label{igualdade}
				If $\varphi$ be a measurable function in $(0,1)\times  Y^*_\varepsilon(x)$ then
				\begin{align*}\dfrac{1}{L_gL_h}\int_{(0,1)\times (0,L_g)\times Y^*_h}&\mathcal{T}_\varepsilon^g(\mathcal{T}_\varepsilon \varphi) dxdy_1 dZ = \\ = & \dfrac{1}{\varepsilon^\gamma L_g}\int_{(0,1)\times Y^*_\varepsilon(x)}\mathcal{T}_\varepsilon\varphi dx dY-\dfrac{1}{\varepsilon^\gamma L_g}\int_{\Lambda_\varepsilon^h\times Y^*_\varepsilon(x)}\mathcal{T}_\varepsilon\varphi dx dY
				\end{align*}
			\end{proposition}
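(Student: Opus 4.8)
The plan is to unwind the composite operator $\mathcal{T}_\varepsilon^g(\mathcal{T}_\varepsilon\varphi)$ directly from its definition and reduce the left-hand integral to an explicit sum over the $h$-cells, mirroring the derivation of Proposition \ref{first_unf}. Indeed, the chain of identities displayed immediately before the statement is precisely this computation; the task is to organize it as a clean three-step argument.

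First I would exploit that, by Definition \ref{unfoldingiterated}, $\mathcal{T}_\varepsilon^g$ vanishes on $\Lambda_\varepsilon^h \times (0,L_g)\times Y^*_h$, so that the integral over $(0,1)\times (0,L_g)\times Y^*_h$ collapses to one over $I_\varepsilon^h\times(0,L_g)\times Y^*_h$. On that set I decompose $I_\varepsilon^h$ into its constituent cells $[k\varepsilon^\beta L_h,(k+1)\varepsilon^\beta L_h]$; on each such cell one has $[x/\varepsilon^\beta]_{L_h}=k$, so the integrand $\mathcal{T}_\varepsilon^g(\mathcal{T}_\varepsilon\varphi)$ is independent of $x$ there, and the $x$-integration simply contributes the factor $\varepsilon^\beta L_h$.

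The core step is the change of variables $x=\varepsilon^\beta kL_h+\varepsilon^\beta z_1$, $y_1=y_1$, $y_2=\varepsilon^\gamma z_2+g(y_1)$, whose Jacobian gives $dy_2\,dx\,dy_1=\varepsilon^{\beta+\gamma}\,dz_2\,dz_1\,dy_1$. Under this map the variables $z_1\in(0,L_h)$ and $z_2\in(-h(z_1),0)$ are sent to $x$ ranging over the $k$-th cell and $y_2\in(g(y_1)-\varepsilon^\gamma h(x/\varepsilon^\beta),g(y_1))$; here I use the $L_h$-periodicity of $h$ to identify $h(z_1)=h(\{x/\varepsilon^\beta\}_{L_h})=h(x/\varepsilon^\beta)$, so that the resulting $y_2$-slice is exactly the fiber of $Y^*_\varepsilon(x)$ in the simplified form \eqref{Ystarx01} (legitimate for small $\varepsilon$ by Proposition \ref{Propaux01} and the subsequent remark). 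Combining the prefactor $\varepsilon^\beta L_h/(L_gL_h)$ with the Jacobian factor $\varepsilon^{-\beta-\gamma}$ produces exactly the constant $1/(\varepsilon^\gamma L_g)$.

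Finally I would reassemble the sum of cell integrals as $\sum_k\int_{k\varepsilon^\beta L_h}^{(k+1)\varepsilon^\beta L_h}=\int_{I_\varepsilon^h}=\int_0^1-\int_{\Lambda_\varepsilon^h}$, which converts the single sum into the difference of the two integrals over $(0,1)\times Y^*_\varepsilon(x)$ and $\Lambda_\varepsilon^h\times Y^*_\varepsilon(x)$ on the right-hand side, completing the identity. The only genuinely delicate point is the bookkeeping in the change of variables—keeping $y_1$ frozen while $(z_1,z_2)\mapsto(x,y_2)$, correctly matching the $y_2$ limits to the strip $Y^*_\varepsilon(x)$, and verifying that the powers of $\varepsilon$ together with the factors $L_g,L_h$ cancel to leave precisely $1/(\varepsilon^\gamma L_g)$; everything else is the routine cell-by-cell manipulation already carried out for Proposition \ref{first_unf}.
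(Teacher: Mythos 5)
Your proposal is correct and is essentially identical to the paper's own proof: the displayed chain of identities preceding the proposition (cell decomposition of $I_\varepsilon^h$, the change of variables $x=\varepsilon^\beta kL_h+\varepsilon^\beta z_1$, $y_2=\varepsilon^\gamma z_2+g(y_1)$ with Jacobian $\varepsilon^{\beta+\gamma}$, and the splitting $\int_{I_\varepsilon^h}=\int_0^1-\int_{\Lambda_\varepsilon^h}$) is precisely the argument you outline. Your added remarks on the $L_h$-periodicity identifying $h(z_1)=h(x/\varepsilon^\beta)$ and on using the simplified form \eqref{Ystarx01} of $Y^*_\varepsilon(x)$ are the right bookkeeping points and are consistent with what the paper does implicitly.
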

		
	\begin{proposition}\label{iteration}
		If $\varphi\in L^1(R^\varepsilon)$, then
		\begin{align*}
		\dfrac{1}{\varepsilon^{\gamma+1}}\int_{\mathcal{O}^\varepsilon}\varphi dxdy
		= \dfrac{1}{L_gL_h}\int_0^1& \int_0^{L_g}\int_{Y^*_h}\mathcal{T}_\varepsilon^g(\mathcal{T}_\varepsilon \varphi) dZdy_1dx \ + \\ & +\dfrac{1}{\varepsilon^\gamma L_g}\int_{\Lambda_\varepsilon^h\times Y^*_\varepsilon(x)}\mathcal{T}_\varepsilon\varphi dx dY+\dfrac{1}{\varepsilon^{\gamma+1}}\int_{\mathcal{O}^\varepsilon_1}\varphi dxdy.
		\end{align*}
	\end{proposition}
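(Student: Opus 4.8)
The plan is to obtain the identity by directly chaining the two decomposition formulas already established, namely Proposition \ref{first_unf} and Proposition \ref{igualdade}. Since $\varphi\in L^1(R^\varepsilon)$, Proposition \ref{first_unf} rewrites the full concentrated integral over $\mathcal{O}^\varepsilon$ as a single-unfolding integral over $(0,1)\times Y^*_\varepsilon(x)$ plus a remainder on the boundary layer $\mathcal{O}^\varepsilon_1$. This reduces the task to expressing the single-unfolding term $\frac{1}{L_g\varepsilon^\gamma}\int_{(0,1)\times Y^*_\varepsilon(x)}\mathcal{T}_\varepsilon\varphi\,dxdY$ through the iterated operator $\mathcal{T}_\varepsilon^g$, which is precisely what Proposition \ref{igualdade} supplies.

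First I would solve the identity of Proposition \ref{igualdade} for that single-unfolding term: isolating $\frac{1}{L_g\varepsilon^\gamma}\int_{(0,1)\times Y^*_\varepsilon(x)}\mathcal{T}_\varepsilon\varphi\,dxdY$ on one side exhibits it as the sum of the doubly-unfolded integral $\frac{1}{L_gL_h}\int_{(0,1)\times(0,L_g)\times Y^*_h}\mathcal{T}_\varepsilon^g(\mathcal{T}_\varepsilon\varphi)\,dxdy_1dZ$ and the corrector $\frac{1}{L_g\varepsilon^\gamma}\int_{\Lambda_\varepsilon^h\times Y^*_\varepsilon(x)}\mathcal{T}_\varepsilon\varphi\,dxdY$. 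One must check that the hypotheses of Proposition \ref{igualdade} are met with $\mathcal{T}_\varepsilon\varphi$ playing the role of $\varphi$: since $\varphi\in L^1(R^\varepsilon)$, the function $\mathcal{T}_\varepsilon\varphi$ is measurable on $(0,1)\times Y^*_\varepsilon(x)$ by the properties of the first unfolding operator recalled in Proposition \ref{unfoldingproperties}, so Proposition \ref{igualdade} indeed applies.

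Substituting this expression into the right-hand side of the identity from Proposition \ref{first_unf} yields exactly the claimed three-term decomposition, after writing the doubly-unfolded integral in the iterated order $\int_0^1\int_0^{L_g}\int_{Y^*_h}(\cdot)\,dZ\,dy_1\,dx$ permitted by Fubini, as $\mathcal{T}_\varepsilon^g(\mathcal{T}_\varepsilon\varphi)$ is integrable on the product domain. There is no genuine obstacle here: the entire analytic content of the statement is carried by the two change-of-variables computations performed just before Proposition \ref{first_unf} and Proposition \ref{igualdade}, and the present proposition is merely their algebraic composition. The only points demanding a line of care are the applicability of Proposition \ref{igualdade} to $\mathcal{T}_\varepsilon\varphi$ and the bookkeeping of the normalizing constants $1/L_g$ and $1/(L_gL_h)$, so that the two correction terms line up with the right factors and no spurious constant survives.
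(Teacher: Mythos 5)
Your proof is correct and takes essentially the same route as the paper: there, Proposition \ref{iteration} is stated with no separate proof as an immediate consequence (``Consequently, we obtain the following propositions'') of the change-of-variables computation establishing Propositions \ref{first_unf} and \ref{igualdade}, which is exactly the substitution you perform. The constants $1/L_g$, $1/(L_gL_h)$ and the two correction terms line up precisely as you describe, so nothing further is needed.
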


	\begin{proposition}\label{unfoldingpropertiesg}
		Let $\varphi\in L^{1}((0,1)\times Y^{*}_{\varepsilon}(x))$. Then
		\begin{enumerate}
		\item[(a)] $\mathcal{T}_{\varepsilon}^{g}$ is linear under sum and multiplication operations.
		\item[(b)] Let $f:\mathbb{R}\to\mathbb{R}$. Then, $\mathcal{T}_{\varepsilon}^{g}(f(\varphi))=f(\mathcal{T}_{\varepsilon}^{g}\varphi)$.
		\item[(c)] Let $\varphi\in L^{p}((0,1)\times Y^{*}_{\varepsilon}(x))$. Then,
		$$
		||\mathcal{T}_{\varepsilon}^{g} \varphi||_{L^{p}((0,1)\times (0,L_{g})\times Y^{*}_{h})}\leq\dfrac{c}{\varepsilon^{\gamma}}||\varphi||_{L^{p}((0,1)\times Y^{*}_{\varepsilon}(x))}
		$$
		\end{enumerate}
	\end{proposition}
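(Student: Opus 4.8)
The plan is to obtain (a) and (b) directly from the pointwise definition of $\mathcal{T}_\varepsilon^g$, and to reduce the norm estimate (c) to the cell-integral identity already established in the computation preceding Proposition~\ref{igualdade}, specialized to the integrand $|\varphi|^p$.

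For (a) and (b) I would first record that, on the set $I_\varepsilon^h\times(0,L_g)\times Y^*_h$, the operator acts by precomposition of $\varphi$ with the fixed (for each $\varepsilon$) measurable map
\[
T_\varepsilon(x,y_1,z_1,z_2)=\Bigl(\varepsilon^\beta\bigl[\tfrac{x}{\varepsilon^\beta}\bigr]_{L_h}L_h+\varepsilon^\beta z_1,\,y_1,\,\varepsilon^\gamma z_2+g(y_1)\Bigr),
\]
and that it is set to $0$ on the complementary region $\Lambda_\varepsilon^h\times(0,L_g)\times Y^*_h$. Since precomposition with a fixed map is linear and multiplicative, i.e. $(\varphi+\psi)\circ T_\varepsilon=\varphi\circ T_\varepsilon+\psi\circ T_\varepsilon$ and $(\varphi\psi)\circ T_\varepsilon=(\varphi\circ T_\varepsilon)(\psi\circ T_\varepsilon)$, and since the zero branch is manifestly linear and multiplicative as well, both identities in (a) follow on each of the two regions separately. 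The same observation gives $\mathcal{T}_\varepsilon^g(f(\varphi))=(f\circ\varphi)\circ T_\varepsilon=f(\varphi\circ T_\varepsilon)=f(\mathcal{T}_\varepsilon^g\varphi)$ on $I_\varepsilon^h\times(0,L_g)\times Y^*_h$; on the complementary region the left side is $0$ while the right side is $f(0)$, so (b) holds a.e. under the usual convention $f(0)=0$, which is all that part (c) requires.

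The substance is in (c). First I would apply (b) with $f(t)=|t|^p$; since $f(0)=0$, this gives the pointwise identity $|\mathcal{T}_\varepsilon^g\varphi|^p=\mathcal{T}_\varepsilon^g(|\varphi|^p)$ everywhere. Integrating over $(0,1)\times(0,L_g)\times Y^*_h$ and invoking the integral identity behind Proposition~\ref{igualdade}, whose derivation repeats verbatim with $|\varphi|^p$ in place of $\mathcal{T}_\varepsilon\varphi$ since it uses nothing beyond measurability and the cell decomposition in $z_1$, yields
\[
\frac{1}{L_gL_h}\int_{(0,1)\times(0,L_g)\times Y^*_h}|\mathcal{T}_\varepsilon^g\varphi|^p\,dx\,dy_1\,dZ=\frac{1}{\varepsilon^\gamma L_g}\int_{(0,1)\times Y^*_\varepsilon(x)}|\varphi|^p\,dx\,dY-\frac{1}{\varepsilon^\gamma L_g}\int_{\Lambda_\varepsilon^h\times Y^*_\varepsilon(x)}|\varphi|^p\,dx\,dY.
\]
Because the subtracted term is the integral of the nonnegative function $|\varphi|^p$, I would discard it to obtain the one-sided bound $\|\mathcal{T}_\varepsilon^g\varphi\|_{L^p}^p\le(L_h/\varepsilon^\gamma)\,\|\varphi\|_{L^p((0,1)\times Y^*_\varepsilon(x))}^p$; taking $p$-th roots and using $0<\varepsilon<1$ together with $\gamma/p\le\gamma$, so that $\varepsilon^{-\gamma/p}\le\varepsilon^{-\gamma}$, absorbs the exponent into the claimed form $c/\varepsilon^\gamma$ with $c=L_h^{1/p}$.

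I do not expect a genuine obstacle here: (a) and (b) are formal consequences of the definition, and (c) is an application of the already-proven cell-integral identity combined with the positivity of $|\varphi|^p$. The only points needing a little care are confirming that the identity underlying Proposition~\ref{igualdade} is insensitive to the choice of integrand, and verifying the elementary inequality $\varepsilon^{-\gamma/p}\le\varepsilon^{-\gamma}$ that justifies the stated (non-sharp) constant $c/\varepsilon^\gamma$ rather than the sharper $c/\varepsilon^{\gamma/p}$.
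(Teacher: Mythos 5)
Your proof is correct and takes essentially the same route as the paper, which simply asserts that the result ``follows directly from Definition \ref{unfoldingiterated}'': your precomposition argument for (a)--(b) and the cell-decomposition/change-of-variables identity (the one derived before Proposition \ref{igualdade}, applied to $|\varphi|^p$) for (c) are exactly the omitted verification. As a bonus, you correctly flag the $f(0)=0$ convention needed for (b) to hold on $\Lambda_\varepsilon^h\times(0,L_g)\times Y_h^*$, and you obtain the sharper bound $c\,\varepsilon^{-\gamma/p}$, which implies the stated $c\,\varepsilon^{-\gamma}$ since $0<\varepsilon<1$.
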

	\begin{proof}
	The proof follows directly from Definition \ref{unfoldingiterated}.
	\end{proof}
	
	Furthermore, we can obtain the following convergence results.

	\begin{proposition}\label{propconvtestfunction}
		Let $\varphi\in L^{p}(0,1)$. Then,
		$$
		\mathcal{T}_\varepsilon^g\varphi \to \varphi
		$$
		strongly in $L^p((0,1)\times(0,L_g)\times Y^*_h)$.
	\end{proposition}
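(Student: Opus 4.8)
The plan is to run the standard Cioranescu--Damlamian density argument, exactly as in the proof of item~6 of Proposition~\ref{unfoldingproperties}, adapted to the cell $Y^*_h$ and the scale $\varepsilon^\beta$. The starting observation is that when $\varphi\in L^p(0,1)$ is regarded as an element of $L^p((0,1)\times(0,L_g)\times Y^*_h)$ that is constant in $(y_1,z_1,z_2)$, the operator $\mathcal{T}_\varepsilon^g$ acts only through the first slot of its argument. Indeed, by Definition~\ref{unfoldingiterated}, for such $\varphi$ we have
$$
\mathcal{T}_\varepsilon^g\varphi(x,y_1,z_1,z_2)=\varphi\!\left(\varepsilon^\beta\Big[\tfrac{x}{\varepsilon^\beta}\Big]_{L_h}L_h+\varepsilon^\beta z_1\right)\ \text{ on } I_\varepsilon^h\times(0,L_g)\times Y^*_h,
$$
and $\mathcal{T}_\varepsilon^g\varphi=0$ on $\Lambda_\varepsilon^h\times(0,L_g)\times Y^*_h$. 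Thus $\mathcal{T}_\varepsilon^g\varphi$ is independent of $y_1$ and $z_2$, and the variables $(y_1,z_2)$ enter the $L^p$ norm only through the finite factor $|Y^*_h|=L_h\langle h\rangle_{(0,L_h)}$ and the length $L_g$; this reduces everything to a one-dimensional unfolding in $(x,z_1)$ at scale $\varepsilon^\beta$ with period $L_h$.

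First I would prove the claim for $\varphi\in C([0,1])$. Writing $\xi_\varepsilon=\varepsilon^\beta[\,x/\varepsilon^\beta]_{L_h}L_h+\varepsilon^\beta z_1$, the analogue of \eqref{Ox} at scale $\varepsilon^\beta$ and period $L_h$ gives $|\xi_\varepsilon-x|\le\varepsilon^\beta L_h$, and one checks $\xi_\varepsilon\in[0,1]$ for all $x\in I_\varepsilon^h$, $z_1\in(0,L_h)$, so $\varphi(\xi_\varepsilon)$ is well defined. By uniform continuity of $\varphi$ on $[0,1]$ the quantity $\sup_{x\in I_\varepsilon^h,\,z_1\in(0,L_h)}|\varphi(\xi_\varepsilon)-\varphi(x)|\to0$, which yields $\mathcal{T}_\varepsilon^g\varphi\to\varphi$ uniformly, hence in $L^p$, on $I_\varepsilon^h\times(0,L_g)\times Y^*_h$. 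On the remaining set the contribution to $\|\mathcal{T}_\varepsilon^g\varphi-\varphi\|_{L^p}^p$ is bounded by $\|\varphi\|_\infty^p\,|\Lambda_\varepsilon^h|\,L_g\,L_h\langle h\rangle_{(0,L_h)}\to0$, since $|\Lambda_\varepsilon^h|\to0$. This settles the continuous case.

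Next I would establish uniform (in $\varepsilon$) boundedness of $\mathcal{T}_\varepsilon^g$ from $L^p(0,1)$ into $L^p((0,1)\times(0,L_g)\times Y^*_h)$. Integrating out $y_1$ and $z_2$ and using that the integrand is constant on each cell $x\in[k\varepsilon^\beta L_h,(k+1)\varepsilon^\beta L_h)$, an exact per-cell integration followed by the change of variables $s=\varepsilon^\beta kL_h+\varepsilon^\beta z_1$ telescopes the resulting Riemann sum into a genuine integral, giving
$$
\|\mathcal{T}_\varepsilon^g\varphi\|_{L^p((0,1)\times(0,L_g)\times Y^*_h)}^p\le L_g\,L_h\,h_1\int_{I_\varepsilon^h}|\varphi(s)|^p\,ds\le L_g\,L_h\,h_1\,\|\varphi\|_{L^p(0,1)}^p,
$$
with $h_1=\max h$, so the operator norm is bounded uniformly in $\varepsilon$. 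With this in hand the general case follows by a $3\eta$ argument: given $\varphi\in L^p(0,1)$ and $\eta>0$, pick $\psi\in C([0,1])$ with $\|\varphi-\psi\|_{L^p(0,1)}<\eta$, split $\mathcal{T}_\varepsilon^g\varphi-\varphi=\mathcal{T}_\varepsilon^g(\varphi-\psi)+(\mathcal{T}_\varepsilon^g\psi-\psi)+(\psi-\varphi)$, control the first and third terms by the uniform bound and the identification of $\varphi,\psi$ on the product space, and send the middle term to zero by the continuous case.

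I expect the only genuine obstacle to be the uniform boundedness step: one must carry out the per-cell computation carefully and verify that the Riemann sum over the cells collapses exactly to $\int_{I_\varepsilon^h}|\varphi|^p$ (so that no spurious $\varepsilon$-dependent constant survives, in contrast with the generic weighted bound of Proposition~\ref{unfoldingpropertiesg}(c), whose $\varepsilon^{-\gamma}$ factor reflects the narrow-strip normalization and does not appear here because $\varphi$ depends on $x$ alone). Everything else is routine, being a verbatim transcription of the mechanism behind item~6 of Proposition~\ref{unfoldingproperties}.
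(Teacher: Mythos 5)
Your proof is correct. The paper gives no argument of its own here—it simply defers to \cite[Proposition 2.11]{AM2}—and your density scheme (the continuous case via uniform continuity and the estimate on $\Lambda_\varepsilon^h$, the uniform operator bound obtained by the exact per-cell change of variables $s=\varepsilon^\beta kL_h+\varepsilon^\beta z_1$, which indeed collapses to $\|\mathcal{T}_\varepsilon^g\varphi\|^p_{L^p((0,1)\times(0,L_g)\times Y^*_h)}\le L_gL_hh_1\|\varphi\|^p_{L^p(0,1)}$ with no surviving power of $\varepsilon$, and then the $3\eta$ argument using linearity of $\mathcal{T}_\varepsilon^g$) is precisely the standard mechanism behind the cited result, so your write-up correctly fills in what the paper leaves to the reference.
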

	\begin{proof}
		The proof is simple and can be performed as in \cite[Proposition 2.11]{AM2}.
	\end{proof}
	\begin{proposition}\label{conv_unf}
		Let $\varphi_\varepsilon\in L^p((0,1);W^{1,p}(Y^*))$ such that $\varphi_\varepsilon\to \varphi$ in $L^p((0,1);W^{1,p}(Y^*))$, where $\varphi\in W^{1,p}(0,1)$. Then
		$$
		\mathcal{T}_\varepsilon^g\varphi_\varepsilon\to \varphi
		$$
		strongly in $L^p((0,1)\times(0,L_g)\times Y^*_h)$.
	\end{proposition}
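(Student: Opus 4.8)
The plan is to compare $\mathcal{T}_\varepsilon^g\varphi_\varepsilon$ with $\varphi$ by splitting off the convergence $\varphi_\varepsilon\to\varphi$ from the unfolding of the fixed limit. Since $\varphi\in W^{1,p}(0,1)$, I first regard it as an element of $L^p((0,1);W^{1,p}(Y^*))$ by viewing it as constant in the variables $(y_1,y_2)$; as its gradient in $(y_1,y_2)$ vanishes, its norm equals $|Y^*|^{1/p}\|\varphi\|_{L^p(0,1)}$, so $\varphi_\varepsilon-\varphi$ still lies in $L^p((0,1);W^{1,p}(Y^*))$ and tends to $0$ there. Using the linearity of $\mathcal{T}_\varepsilon^g$ (Proposition \ref{unfoldingpropertiesg}(a)) I would write
$$\mathcal{T}_\varepsilon^g\varphi_\varepsilon-\varphi=\mathcal{T}_\varepsilon^g(\varphi_\varepsilon-\varphi)+\bigl(\mathcal{T}_\varepsilon^g\varphi-\varphi\bigr)$$
and treat the two summands separately.

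The second summand is immediate: since $\varphi\in W^{1,p}(0,1)\subset L^p(0,1)$ does not depend on $(y_1,y_2)$, Proposition \ref{propconvtestfunction} gives directly that $\mathcal{T}_\varepsilon^g\varphi\to\varphi$ strongly in $L^p((0,1)\times(0,L_g)\times Y^*_h)$, so this term contributes nothing in the limit. The real work is in the first summand, where the difficulty is that $\mathcal{T}_\varepsilon^g$ carries the singular weight $\varepsilon^{-\gamma}$ coming from the thinness of $Y^*_\varepsilon(x)$ (cf.\ Proposition \ref{unfoldingpropertiesg}(c)), so that a naive estimate would blow up. The key is that this weight is exactly compensated by the smallness of integrals over the narrow strip. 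Concretely, I would apply the identity $\mathcal{T}_\varepsilon^g(f(\cdot))=f(\mathcal{T}_\varepsilon^g\cdot)$ of Proposition \ref{unfoldingpropertiesg}(b) with $f(t)=|t|^p$, and then the change-of-variables computation underlying Proposition \ref{igualdade}, now applied to the function $|\varphi_\varepsilon-\varphi|^p$ rather than to $\mathcal{T}_\varepsilon\varphi$, to obtain
$$\|\mathcal{T}_\varepsilon^g(\varphi_\varepsilon-\varphi)\|_{L^p((0,1)\times(0,L_g)\times Y^*_h)}^p=\frac{L_h}{\varepsilon^\gamma}\int_{(0,1)\times Y^*_\varepsilon(x)}|\varphi_\varepsilon-\varphi|^p\,dx\,dY-\frac{L_h}{\varepsilon^\gamma}\int_{\Lambda_\varepsilon^h\times Y^*_\varepsilon(x)}|\varphi_\varepsilon-\varphi|^p\,dx\,dY.$$

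Dropping the nonnegative second term and invoking the trace-type estimate of Proposition \ref{propboundnessuci}(a) for $\varphi_\varepsilon-\varphi\in L^p((0,1);W^{1,p}(Y^*))$, the factor $\varepsilon^{-\gamma}$ is absorbed and I reach the uniform-in-$\varepsilon$ bound
$$\|\mathcal{T}_\varepsilon^g(\varphi_\varepsilon-\varphi)\|_{L^p((0,1)\times(0,L_g)\times Y^*_h)}^p\leq C\,\|\varphi_\varepsilon-\varphi\|_{L^p((0,1);W^{1,p}(Y^*))}^p,$$
with $C>0$ independent of $\varepsilon$. Since the right-hand side tends to $0$ by hypothesis, the first summand converges to $0$ strongly in $L^p$, and adding the two estimates via the triangle inequality yields $\mathcal{T}_\varepsilon^g\varphi_\varepsilon\to\varphi$.

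I expect the only delicate point to be precisely this compensation of the $\varepsilon^{-\gamma}$ weight: it is essential that $\varphi_\varepsilon-\varphi$ be controlled in the stronger $L^p((0,1);W^{1,p}(Y^*))$ norm, and not merely in $L^p$, so that Proposition \ref{propboundnessuci}(a) can be used to cancel the singular factor. Without the extra $W^{1,p}(Y^*)$-regularity the operator $\mathcal{T}_\varepsilon^g$ fails to be uniformly bounded (one only recovers the weaker $\varepsilon^{-\gamma}$ bound of Proposition \ref{unfoldingpropertiesg}(c), which diverges), and the argument would break down; everything else reduces to the splitting above together with Proposition \ref{propconvtestfunction}.
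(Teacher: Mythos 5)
Your proposal is correct and follows essentially the same route as the paper: the same splitting $\mathcal{T}_\varepsilon^g\varphi_\varepsilon-\varphi=\mathcal{T}_\varepsilon^g(\varphi_\varepsilon-\varphi)+(\mathcal{T}_\varepsilon^g\varphi-\varphi)$, with Proposition \ref{propconvtestfunction} handling the second term and the exact-integral identity of Proposition \ref{igualdade} combined with the trace-type bound of Proposition \ref{propboundnessuci}(a) absorbing the $\varepsilon^{-\gamma}$ weight in the first. Your write-up is in fact slightly more explicit than the paper's (invoking Proposition \ref{unfoldingpropertiesg}(b) with $f(t)=|t|^p$ and discarding the nonnegative boundary-layer term), but the argument is the same.
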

	\begin{proof}
		\begin{equation}
		\begin{gathered}
		||\mathcal{T}_\varepsilon^g\varphi_\varepsilon- \varphi||_{L^p((0,1)\times(0,L_g)\times Y^*_h)}\\\leq ||\mathcal{T}_\varepsilon^g\varphi_\varepsilon- \mathcal{T}_\varepsilon^g\varphi||_{L^p((0,1)\times(0,L_g)\times Y^*_h)}+||\mathcal{T}_\varepsilon^g\varphi-\varphi||_{L^p((0,1)\times(0,L_g)\times Y^*_h)}.
		\end{gathered}
		\end{equation}
		
		Using Proposition \ref{propconvtestfunction} we obtain
		$$||\mathcal{T}_\varepsilon^g\varphi-\varphi||_{L^p((0,1)\times(0,L_g)\times Y^*_h)}\to 0$$ as $\varepsilon\to 0$. 
		Also, by Propositions  \ref{propboundnessuci} and \ref{igualdade}, we have 
		$$
		\begin{gathered}
		||\mathcal{T}_\varepsilon^g\varphi_\varepsilon- \mathcal{T}_\varepsilon^g\varphi||_{L^p((0,1)\times(0,L_g)\times Y^*_h)}^p\leq \dfrac{C}{\varepsilon^\gamma}\int_{(0,1)\times Y^*_\varepsilon(x)} |\varphi_\varepsilon-\varphi|^p dxdY-\dfrac{C}{\varepsilon^\gamma}\int_{\Lambda_\varepsilon^h\times Y^*_\varepsilon(x)}|\varphi_\varepsilon-\varphi|^p dxdY \\
		\leq C||\varphi_\varepsilon-\varphi||^p_{L^p((0,1);W^{1,p}(Y^*))}-\dfrac{C}{\varepsilon^\gamma}\int_{\Lambda_\varepsilon^h\times Y^*_\varepsilon(x)}|\varphi_\varepsilon-\varphi|^p dxdY \to 0.
		\end{gathered}
		$$
		Therefore, one concludes that
		$$
		\mathcal{T}_\varepsilon^g\varphi_\varepsilon\to \varphi
		$$
		strongly in $L^p((0,1)\times(0,L_g)\times Y^*_h)$.
	\end{proof}
	
	The next results are direct consequence of Proposition \ref{conv_unf}.
	\begin{corollary}
		Let $\varphi_\varepsilon\in W^{1,p}(R^\varepsilon)$ such that there is $\varphi\in W^{1,p}(0,1)$ satisfying
		$$
		\mathcal{T}_\varepsilon \varphi_\varepsilon\to\varphi\mbox{ strongly in }L^p((0,1);W^{1,p}(Y^*)).
		$$
		Then
		$$
		\mathcal{T}_\varepsilon^g(\mathcal{T}_\varepsilon\varphi_\varepsilon)\to \varphi\mbox{ strongly in }L^p((0,1)\times (0,L_g)\times Y^*_h).
		$$\,
	\end{corollary}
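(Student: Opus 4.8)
The plan is to apply Proposition \ref{conv_unf} directly, taking the sequence $\psi_\varepsilon := \mathcal{T}_\varepsilon \varphi_\varepsilon$ in the role of the generic sequence denoted $\varphi_\varepsilon$ there. The only point that needs to be checked before this substitution is legitimate is that $\psi_\varepsilon$ actually lives in the space $L^p((0,1);W^{1,p}(Y^*))$ on which Proposition \ref{conv_unf} operates; once that is in place, the statement is an immediate transcription.

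First I would verify this membership. Since $\varphi_\varepsilon \in W^{1,p}(R^\varepsilon)$, item (3) of Proposition \ref{unfoldingproperties} gives $\mathcal{T}_\varepsilon\varphi_\varepsilon \in L^p((0,1)\times Y^*)$, while item (4) gives that its distributional derivatives satisfy $\partial_{y_1}\mathcal{T}_\varepsilon\varphi_\varepsilon = \varepsilon^\alpha \mathcal{T}_\varepsilon(\partial_x\varphi_\varepsilon)$ and $\partial_{y_2}\mathcal{T}_\varepsilon\varphi_\varepsilon = \varepsilon\, \mathcal{T}_\varepsilon(\partial_y\varphi_\varepsilon)$, each again in $L^p((0,1)\times Y^*)$ by item (3) applied to $\partial_x\varphi_\varepsilon, \partial_y\varphi_\varepsilon \in L^p(R^\varepsilon)$. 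Consequently $\psi_\varepsilon = \mathcal{T}_\varepsilon\varphi_\varepsilon \in L^p\big((0,1);W^{1,p}(Y^*)\big)$, so the sequence $(\psi_\varepsilon)$ is an admissible input for Proposition \ref{conv_unf}.

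Next, the hypothesis of the corollary is exactly that $\psi_\varepsilon = \mathcal{T}_\varepsilon\varphi_\varepsilon \to \varphi$ strongly in $L^p((0,1);W^{1,p}(Y^*))$ with the limit $\varphi \in W^{1,p}(0,1)$. These are precisely the hypotheses required by Proposition \ref{conv_unf} for the sequence $(\psi_\varepsilon)$. Applying that proposition therefore yields $\mathcal{T}_\varepsilon^g \psi_\varepsilon \to \varphi$ strongly in $L^p((0,1)\times(0,L_g)\times Y^*_h)$. Since $\mathcal{T}_\varepsilon^g \psi_\varepsilon = \mathcal{T}_\varepsilon^g(\mathcal{T}_\varepsilon\varphi_\varepsilon)$ by the definition of $\psi_\varepsilon$, this is exactly the desired conclusion.

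There is essentially no obstacle here: all the analytic content has already been absorbed into Proposition \ref{conv_unf} (which in turn relied on the boundedness estimate of Proposition \ref{propboundnessuci}, the identity of Proposition \ref{igualdade}, and the strong convergence of test functions in Proposition \ref{propconvtestfunction}). The only mild subtlety is the preliminary check that the iterated unfolding $\mathcal{T}_\varepsilon^g$ is being fed a function in $L^p((0,1);W^{1,p}(Y^*))$ rather than merely in $L^p((0,1)\times Y^*_\varepsilon(x))$; this is settled by the mapping properties of $\mathcal{T}_\varepsilon$ recalled above, after which the corollary is a one-step specialization of the preceding proposition.
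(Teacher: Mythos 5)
Your proof is correct and is essentially the paper's own argument: the paper states this corollary as a direct consequence of Proposition \ref{conv_unf}, applied to the sequence $\mathcal{T}_\varepsilon\varphi_\varepsilon$ in place of the generic sequence there. Your additional verification that $\mathcal{T}_\varepsilon\varphi_\varepsilon$ indeed belongs to $L^p((0,1);W^{1,p}(Y^*))$ (via items (3) and (4) of Proposition \ref{unfoldingproperties}) is a sound, slightly more careful rendering of the same one-step specialization.
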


	\begin{corollary}\label{conv_lin}
		Let $f^\varepsilon\in L^{p'}(R^\varepsilon)$ such that
		$$
		\mathcal{T}_\varepsilon^g(\mathcal{T}_\varepsilon f^\varepsilon) \rightharpoonup \hat{f}\mbox{ weakly in }L^{p'}((0,1)\times(0,L_g)\times Y^*_h)
		$$
		for some $\hat{f}\in L^{p'}(0,1)$. Then,
		$$
		\dfrac{1}{L_gL_h}\int_0^1\int_0^{L_g}\int_{Y^*_h} \mathcal{T}_\varepsilon^g(\mathcal{T}_\varepsilon f^\varepsilon)\mathcal{T}_\varepsilon^g(\mathcal{T}_\varepsilon \varphi) dxdy_1dZ \to\dfrac{1}{L_gL_h} \int_0^1\int_0^{L_g}\int_{Y^*_h}\hat{f}\varphi dxdy_1dZ
		$$
		for all $\varphi\in L^p(0,1)$. Moreover,
		$$
		\dfrac{1}{\varepsilon^{\gamma+1}}\int_{\mathcal{O}^\varepsilon} f^\varepsilon\varphi dxdy\to \int_0^1 \bar{f}
		\varphi dx,
		$$
		with 
		$$
		\bar{f}=\dfrac{1}{L_gL_h}\int_{(0,L_g)\times Y^*_h} \hat{f} dy_1 dZ.
		$$
	\end{corollary}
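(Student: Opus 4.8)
The plan is to read the first convergence as a weak--strong pairing and then obtain the ``moreover'' part by feeding it into the iteration identity of Proposition \ref{iteration}. The hypothesis already supplies $\mathcal{T}_\varepsilon^g(\mathcal{T}_\varepsilon f^\varepsilon)\rightharpoonup\hat f$ weakly in $L^{p'}((0,1)\times(0,L_g)\times Y^*_h)$, so to pass to the limit in the product it suffices to show that the test factor converges \emph{strongly}, namely $\mathcal{T}_\varepsilon^g(\mathcal{T}_\varepsilon\varphi)\to\varphi$ in $L^p((0,1)\times(0,L_g)\times Y^*_h)$ for every $\varphi\in L^p(0,1)$. Granting this, weak convergence times strong convergence yields $\int \mathcal{T}_\varepsilon^g(\mathcal{T}_\varepsilon f^\varepsilon)\,\mathcal{T}_\varepsilon^g(\mathcal{T}_\varepsilon\varphi)\to\int \hat f\,\varphi$, which is the first claim.

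To prove this key strong convergence I would first treat $\varphi\in W^{1,p}(0,1)$, regarded as an element of $W^{1,p}(R^\varepsilon)$ that is constant in $y$. By Proposition \ref{unfoldingproperties} one has $\mathcal{T}_\varepsilon\varphi\to\varphi$ in $L^p$, while $\partial_{y_1}\mathcal{T}_\varepsilon\varphi=\varepsilon^\alpha\mathcal{T}_\varepsilon(\partial_x\varphi)\to 0$ and $\partial_{y_2}\mathcal{T}_\varepsilon\varphi=0$; hence $\mathcal{T}_\varepsilon\varphi\to\varphi$ strongly in $L^p((0,1);W^{1,p}(Y^*))$, and the Corollary stated just before \eqref{conv_lin} gives $\mathcal{T}_\varepsilon^g(\mathcal{T}_\varepsilon\varphi)\to\varphi$. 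For a general $\varphi\in L^p(0,1)$ I would pass to the limit by density: Proposition \ref{unfoldingpropertiesg}(c), together with the boundedness of $g$ and $h$ and the fact that $\mathcal{T}_\varepsilon\varphi$ is independent of $y_2$, produces a bound $\|\mathcal{T}_\varepsilon^g(\mathcal{T}_\varepsilon\psi)\|_{L^p}\le C\|\psi\|_{L^p(0,1)}$ uniform in $\varepsilon$, so approximating $\varphi$ in $L^p(0,1)$ by $W^{1,p}$ functions closes the argument through the usual three--term estimate.

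For the last convergence I would apply Proposition \ref{iteration} to $f^\varepsilon\varphi\in L^1(R^\varepsilon)$ and use multiplicativity of the unfolding operators, $\mathcal{T}_\varepsilon^g(\mathcal{T}_\varepsilon(f^\varepsilon\varphi))=\mathcal{T}_\varepsilon^g(\mathcal{T}_\varepsilon f^\varepsilon)\,\mathcal{T}_\varepsilon^g(\mathcal{T}_\varepsilon\varphi)$. The leading term then converges to $\tfrac{1}{L_gL_h}\int_0^1\int_0^{L_g}\int_{Y^*_h}\hat f\,\varphi$ by the first part; since $\hat f$ and $\varphi$ depend only on $x$, the inner integration over $(0,L_g)\times Y^*_h$ factors out exactly $\bar f=\tfrac{1}{L_gL_h}\int_{(0,L_g)\times Y^*_h}\hat f\,dy_1dZ$, giving $\int_0^1\bar f\,\varphi\,dx$. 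The two remainder terms in Proposition \ref{iteration} must then be shown to vanish: applying H\"older to each and splitting off $\mathcal{T}_\varepsilon\varphi$, one factor stays uniformly bounded thanks to the $L^{p'}$-bound on $\mathcal{T}_\varepsilon^g(\mathcal{T}_\varepsilon f^\varepsilon)$ coming from the weak hypothesis (read back through Proposition \ref{igualdade}), while the other carries $|\Lambda_\varepsilon^h|^{1/p'}$ or $|\Lambda_\varepsilon|^{1/p'}\to 0$; this is precisely the mechanism of Proposition \ref{uciO1} and Proposition \ref{propboundnessuci}(b).

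I expect the genuine obstacle to be the strong convergence of the test factor for merely $L^p$ data: the $W^{1,p}$ case is immediate from the preceding Corollary, but the density step is what forces the uniform operator bound for $\mathcal{T}_\varepsilon^g\circ\mathcal{T}_\varepsilon$ on $L^p(0,1)$, and it is this same uniform control — extracted from the weak convergence and transferred to the thin strip via Proposition \ref{igualdade} — that is needed to annihilate the two correction terms in the iteration identity.
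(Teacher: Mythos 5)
Your first claim and its proof are fine: pairing the assumed weak convergence of $\mathcal{T}_\varepsilon^g(\mathcal{T}_\varepsilon f^\varepsilon)$ against strong convergence of the test factor is the right mechanism, and your two-step argument for that strong convergence (the $W^{1,p}(0,1)$ case via Proposition \ref{conv_unf} and the preceding corollary, then density in $L^p(0,1)$ using a uniform bound in which the $\varepsilon^{-\gamma}$ of $\mathcal{T}_\varepsilon^g$ is compensated by the $\varepsilon^{\gamma}$-thickness of $Y^*_\varepsilon(x)$) is in fact more careful than the paper, which simply declares the corollary a direct consequence of Proposition \ref{conv_unf} even though that proposition, as stated, does not apply to a test function that is merely $L^p$.

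The gap is in the ``moreover'' part. You claim that the two remainder terms produced by Proposition \ref{iteration} vanish because the weak hypothesis yields, via boundedness of weakly convergent sequences and Proposition \ref{igualdade}, a uniform $L^{p'}$ control on $f^\varepsilon$. It does not. Reading $\sup_\varepsilon\|\mathcal{T}_\varepsilon^g(\mathcal{T}_\varepsilon f^\varepsilon)\|_{L^{p'}}<\infty$ back through Proposition \ref{igualdade} (applied to $|f^\varepsilon|^{p'}$) controls only $\varepsilon^{-\gamma}\int_{I_\varepsilon^h\times Y^*_\varepsilon(x)}|\mathcal{T}_\varepsilon f^\varepsilon|^{p'}dxdY$, i.e.\ the integral over the \emph{complement} of $\Lambda_\varepsilon^h$; moreover $\mathcal{T}_\varepsilon$ is by definition zero for $x\in\Lambda_\varepsilon$, so it never samples the values of $f^\varepsilon$ on $R_1^\varepsilon\supset\mathcal{O}_1^\varepsilon$. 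Thus the two regions appearing in the remainders are exactly the regions the composed unfolding is blind to, and the weak convergence hypothesis carries no information there: one may redefine $f^\varepsilon$ on $\mathcal{O}_1^\varepsilon$, e.g.\ $f^\varepsilon\equiv\varepsilon^{-\alpha}$ there and $0$ elsewhere, without changing $\mathcal{T}_\varepsilon^g(\mathcal{T}_\varepsilon f^\varepsilon)$ at all (so $\hat f=0$), while $\varepsilon^{-(\gamma+1)}\int_{\mathcal{O}_1^\varepsilon}f^\varepsilon\,dxdy\sim\varepsilon^{-\alpha}|\Lambda_\varepsilon|$ stays bounded away from $0$ along suitable sequences $\varepsilon\to0$. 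The missing ingredient is the additional uniform bound $\varepsilon^{-\gamma}|||f^\varepsilon|||^{p'}_{L^{p'}(\mathcal{O}^\varepsilon)}\leq C$ --- precisely the hypothesis in force in Theorem \ref{alpha1}, where the corollary is invoked. With it, H\"older together with $|\Lambda_\varepsilon|\to0$, $|\Lambda_\varepsilon^h|\to0$ and the thinness of $Y^*_\varepsilon(x)$ annihilates both remainders (this is the actual mechanism of Propositions \ref{uciO1} and \ref{propboundnessuci}(b)). So your decomposition is the right one, but the control you need cannot be extracted from the weak convergence; it must be assumed, and the corollary as literally stated is incomplete without it.
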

	\begin{remark}
		If $f^\varepsilon(x,y)=f(x)$, then
		$$
			\dfrac{1}{\varepsilon^{\gamma+1}}\int_{\mathcal{O}^\varepsilon} f^\varepsilon\varphi dxdy\to \langle h\rangle _{(0,L_h)}\int_0^1 f \varphi dx.
		$$\,
	\end{remark}
	
	Now, we consider concentrated integrals with nonlinear terms given by bounded $C^2$-functions with bounded derivatives. 
	\begin{proposition}\label{conv_f}
		Let $f:\mathbb{R}\to \mathbb{R}$ be a $\mathcal{C}^2$ function satisfying ($\mathbf{H_f}$). Then,
		$$
		\int_0^1\int_0^{L_g}\int_{Y^*_h}\mathcal{T}_\varepsilon^g(\mathcal{T}_\varepsilon(f(\varphi_\varepsilon)))\mathcal{T}_\varepsilon^g\mathcal{T}_\varepsilon\psi \to \int_0^1\int_0^{L_g}\int_{Y^*_h}f(\varphi) \psi,\quad \forall \psi\in W^{1,p}(0,1),
		$$
		whenever $\varphi_\varepsilon\in W^{1,p}(R^\varepsilon)$ satisfies $\mathcal{T}_\varepsilon\varphi_\varepsilon\to \varphi$  in $L^p((0,1);W^{1,p}(Y^*))$. 
		
	\end{proposition}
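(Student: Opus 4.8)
The plan is to reduce the whole expression to the single composition $f(\Phi_\varepsilon)$, where $\Phi_\varepsilon := \mathcal{T}_\varepsilon^g(\mathcal{T}_\varepsilon \varphi_\varepsilon)$, and then pass to the limit using strong convergence of $\Phi_\varepsilon$ together with the boundedness and Lipschitz character of $f$.

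First I would use that both unfolding operators commute with post-composition by $f$ wherever they act as genuine pull-backs: directly from the definition of $\mathcal{T}_\varepsilon$ one has $\mathcal{T}_\varepsilon(f(\varphi_\varepsilon)) = f(\mathcal{T}_\varepsilon \varphi_\varepsilon)$ on $I_\varepsilon \times Y^*$, while property (b) of Proposition \ref{unfoldingpropertiesg} gives $\mathcal{T}_\varepsilon^g(f(\cdot)) = f(\mathcal{T}_\varepsilon^g(\cdot))$ on $I_\varepsilon^h \times (0,L_g)\times Y^*_h$. Hence $\mathcal{T}_\varepsilon^g(\mathcal{T}_\varepsilon(f(\varphi_\varepsilon)))$ coincides with $f(\Phi_\varepsilon)$ except on the end strip $\Lambda_\varepsilon^h$ and on the preimage of $\Lambda_\varepsilon$ under the outer translation, two sets of measure $|E_\varepsilon|\to 0$. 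Since $f$ is bounded, the resulting $f(0)$-type mismatch contributes at most $2\|f\|_\infty\,|E_\varepsilon|^{1/p'}\,\|\mathcal{T}_\varepsilon^g\mathcal{T}_\varepsilon\psi\|_{L^p}$ to the integral and thus vanishes; I also note that the test factor $\mathcal{T}_\varepsilon^g\mathcal{T}_\varepsilon\psi$ is itself zero on $\Lambda_\varepsilon^h$, so that strip drops out automatically.

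Next I would establish the two convergences driving the limit. For the nonlinear factor, the first Corollary following Proposition \ref{conv_unf} turns the hypothesis $\mathcal{T}_\varepsilon \varphi_\varepsilon \to \varphi$ in $L^p((0,1);W^{1,p}(Y^*))$ into $\Phi_\varepsilon \to \varphi$ strongly in $L^p((0,1)\times(0,L_g)\times Y^*_h)$. Because $f\in C^2$ has bounded derivatives it is globally Lipschitz, so $|f(\Phi_\varepsilon)-f(\varphi)|\le \|f'\|_\infty|\Phi_\varepsilon-\varphi|$ gives $f(\Phi_\varepsilon)\to f(\varphi)$ in $L^p$; since in addition $f$ is bounded, $(f(\Phi_\varepsilon))$ is bounded in $L^\infty$ on this bounded domain, and interpolating the $L^p$ convergence against that uniform bound upgrades it to strong convergence in every $L^q$, $1\le q<\infty$, in particular in $L^{p'}$. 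For the test factor, since $\psi\in W^{1,p}(0,1)$ does not depend on $y$, the derivative relation $\partial_{y_1}\mathcal{T}_\varepsilon\psi=\varepsilon^\alpha\mathcal{T}_\varepsilon(\partial_x\psi)\to 0$ and $\partial_{y_2}\mathcal{T}_\varepsilon\psi=0$ of Proposition \ref{unfoldingproperties}, together with $\mathcal{T}_\varepsilon\psi\to\psi$ in $L^p$ from the same proposition, give $\mathcal{T}_\varepsilon\psi\to\psi$ in $L^p((0,1);W^{1,p}(Y^*))$; applying Proposition \ref{conv_unf} to the sequence $\mathcal{T}_\varepsilon\psi$ then yields $\mathcal{T}_\varepsilon^g\mathcal{T}_\varepsilon\psi\to\psi$ strongly in $L^p((0,1)\times(0,L_g)\times Y^*_h)$.

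Finally, pairing the strong $L^{p'}$ convergence of $f(\Phi_\varepsilon)$ with the strong $L^p$ convergence of $\mathcal{T}_\varepsilon^g\mathcal{T}_\varepsilon\psi$, the product converges in $L^1$ to $f(\varphi)\psi$ (recalling that $\varphi$, hence $f(\varphi)$, depends only on $x$), which gives the stated limit. I expect the main obstacle to be the first step: the truncation of the two nested unfolding operators to zero on the boundary strips means the clean identity $\mathcal{T}_\varepsilon^g(\mathcal{T}_\varepsilon(f(\varphi_\varepsilon)))=f(\Phi_\varepsilon)$ fails exactly where $\mathcal{T}_\varepsilon$ or $\mathcal{T}_\varepsilon^g$ vanish, so one must carefully quarantine these $f(0)$-type discrepancies and dispose of them using $\|f\|_\infty<\infty$ and $|\Lambda_\varepsilon|,|\Lambda_\varepsilon^h|\to 0$; a secondary point deserving care is the $L^p\to L^{p'}$ upgrade of $f(\Phi_\varepsilon)$, which hinges essentially on the boundedness of $f$ rather than on any sharpening of the convergence of $\Phi_\varepsilon$.
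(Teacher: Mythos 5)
Your proposal is correct and matches the paper's own proof in all essentials: both arguments rest on the commutation of $\mathcal{T}_\varepsilon^g\circ\mathcal{T}_\varepsilon$ with $f$, the strong convergence $\mathcal{T}_\varepsilon^g(\mathcal{T}_\varepsilon\varphi_\varepsilon)\to\varphi$ (the corollary following Proposition \ref{conv_unf}), the strong convergence $\mathcal{T}_\varepsilon^g\mathcal{T}_\varepsilon\psi\to\psi$ (Propositions \ref{propconvtestfunction} and \ref{conv_unf}), and the boundedness and global Lipschitz character of $f$. The only divergence is bookkeeping --- the paper telescopes the difference into three terms $I+II+III$ and estimates each via H\"older with the test factor placed in $L^{p'}$, whereas you convert $f(\mathcal{T}_\varepsilon^g\mathcal{T}_\varepsilon\varphi_\varepsilon)\to f(\varphi)$ into $L^{p'}$ convergence by interpolation and pair it with the $L^p$ convergence of the test factor; your explicit quarantine of the zero-extension mismatch on $\Lambda_\varepsilon$ and $\Lambda_\varepsilon^h$ is, if anything, more careful than the paper's implicit appeal to Proposition \ref{unfoldingpropertiesg}(b).
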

	\begin{proof}
		See that
		$$
		\begin{gathered}
		\int_0^1\int_0^{L_g}\int_{Y^*_h}\mathcal{T}_\varepsilon^g(\mathcal{T}_\varepsilon(f(\varphi_\varepsilon)))\mathcal{T}_\varepsilon^g\mathcal{T}_\varepsilon\psi -f(\varphi) \psi \,dxdy_1dZ\\
		=
		\int_0^1\int_0^{L_g}\int_{Y^*_h}\mathcal{T}_\varepsilon^g(\mathcal{T}_\varepsilon(f(\varphi_\varepsilon)))\mathcal{T}_\varepsilon^g\mathcal{T}_\varepsilon\psi -f(\mathcal{T}_\varepsilon^g\mathcal{T}_\varepsilon\varphi) \mathcal{T}_\varepsilon^g\mathcal{T}_\varepsilon\psi \,dxdy_1dZ\\
		+\int_0^1\int_0^{L_g}\int_{Y^*_h}f(\mathcal{T}_\varepsilon^g\mathcal{T}_\varepsilon\varphi) \mathcal{T}_\varepsilon^g\mathcal{T}_\varepsilon\psi-f(\mathcal{T}_\varepsilon^g\mathcal{T}_\varepsilon\varphi) \psi  \,dxdy_1dZ\\
		+\int_0^1\int_0^{L_g}\int_{Y^*_h}f(\mathcal{T}_\varepsilon^g\mathcal{T}_\varepsilon\varphi)\psi-f(\varphi) \psi  \,dxdy_1dZ = I+II+III.
		\end{gathered}
		$$
		We show that each integral of the right hand side of the  above equality converges to zero.
		First, due to Propositions \ref{unfoldingproperties} and \ref{unfoldingpropertiesg}, we get 
		\begin{align*}
		|I|=&\left|\int_0^1\int_0^{L_g}\int_{Y^*_h}\mathcal{T}_\varepsilon^g(\mathcal{T}_\varepsilon(f(\varphi_\varepsilon)))\mathcal{T}_\varepsilon^g\mathcal{T}_\varepsilon\psi -f(\mathcal{T}_\varepsilon^g\mathcal{T}_\varepsilon\varphi) \mathcal{T}_\varepsilon^g\mathcal{T}_\varepsilon\psi \,dxdy_1dZ\right|\\& \leq \int_0^1\int_0^{L_g}\int_{Y^*_h}|\mathcal{T}_\varepsilon^g(\mathcal{T}_\varepsilon(f(\varphi_\varepsilon))) -f(\mathcal{T}_\varepsilon^g\mathcal{T}_\varepsilon\varphi) ||\mathcal{T}_\varepsilon^g\mathcal{T}_\varepsilon\psi| \,dxdy_1dZ \\& = \int_0^1\int_0^{L_g}\int_{Y^*_h}|f(\mathcal{T}_\varepsilon^g\mathcal{T}_\varepsilon\varphi_\varepsilon) -f(\mathcal{T}_\varepsilon^g\mathcal{T}_\varepsilon\varphi) ||\mathcal{T}_\varepsilon^g\mathcal{T}_\varepsilon\psi| \,dxdy_1dZ \\& \leq \int_0^1\int_0^{L_g}\int_{Y^*_h}\max_{x \in \mathbb{R}}|f'(x)||\mathcal{T}_\varepsilon^g\mathcal{T}_\varepsilon\varphi_\varepsilon-\mathcal{T}_\varepsilon^g\mathcal{T}_\varepsilon\varphi ||\mathcal{T}_\varepsilon^g\mathcal{T}_\varepsilon\psi| \,dxdy_1dZ \\ & \leq C \|\mathcal{T}_\varepsilon^g\mathcal{T}_\varepsilon\varphi_\varepsilon-\mathcal{T}_\varepsilon^g\mathcal{T}_\varepsilon\varphi\|_{L^p((0,1)\times(0,L_g)\times Y_h^*)}\|\mathcal{T}_\varepsilon^g\mathcal{T}_\varepsilon\psi\|_{L^{p'}((0,1)\times(0,L_g)\times Y_h^*)} \\& \leq K\left(||\varphi_\varepsilon-\varphi||^p_{L^p((0,1);W^{1,p}(Y^*))}-\dfrac{C}{\varepsilon^\gamma}\int_{\Lambda_\varepsilon^h\times Y^*_\varepsilon(x)}|\varphi_\varepsilon-\varphi|^p dxdY\right)  \to 0.
		\end{align*}

		On the other hand, by Propositions \ref{propconvtestfunction} and \ref{conv_unf}, since $f$ is bounded we have that
		\begin{align*}
		|II|=&\left|\int_0^1\int_0^{L_g}\int_{Y^*_h}f(\mathcal{T}_\varepsilon^g\mathcal{T}_\varepsilon\varphi) \mathcal{T}_\varepsilon^g\mathcal{T}_\varepsilon\psi-f(\mathcal{T}_\varepsilon^g\mathcal{T}_\varepsilon\varphi) \psi  \,dxdy_1dZ\right|\\& \leq \int_0^1\int_0^{L_g}\int_{Y^*_h}|f(\mathcal{T}_\varepsilon^g\mathcal{T}_\varepsilon\varphi)|| \mathcal{T}_\varepsilon^g\mathcal{T}_\varepsilon\psi-\psi| \,dxdy_1dZ \\ & \leq  \max_{x\in\mathbb{R}}|f(x)||Y^*_h|^{1/p'}(L_g)^{1/p'}\|\mathcal{T}_\varepsilon^g\mathcal{T}_\varepsilon\psi-\psi\|_{L^{p}((0,1)\times(0,L_g)\times Y_h^*)}\to 0.
		\end{align*}

		Finally, using again Propositions \ref{propconvtestfunction} and \ref{conv_unf} we get 
		\begin{align*}
		|III|=&\left|\int_0^1\int_0^{L_g}\int_{Y^*_h}f(\mathcal{T}_\varepsilon^g\mathcal{T}_\varepsilon\varphi)\psi-f(\varphi) \psi  \,dxdy_1dZ\right| \\& \leq \int_0^1\int_0^{L_g}\int_{Y^*_h}|f(\mathcal{T}_\varepsilon^g\mathcal{T}_\varepsilon\varphi)-f(\varphi)|\psi| \,dxdy_1dZ \\ & \leq  \max_{x\in\mathbb{R}}|f'(x)|\|\mathcal{T}_\varepsilon^g\mathcal{T}_\varepsilon\varphi-\varphi\|_{L^{p}((0,1)\times(0,L_g)\times Y_h^*)}L_g^{1/p'}|Y^*_h|^{1/p'}\|\psi\|_{L^{p'}(0,1)}\to 0
		\end{align*}
		as $\varepsilon\to0$ proving the result.
	
	\end{proof}

	\section{Convergence results} \label{linear}

	Consider the variational formulation \eqref{variational_l} from $p$-Laplacian equation \eqref{problem_l}. Our main goal here is to pass the limit on solutions $u_\varepsilon$ analyzing the different effects of the oscillatory boundary depending on the order $\alpha$.
	First we obtain the uniform boundedness to the solutions for any $\varepsilon>0$ and $\alpha>0$.

		\begin{proposition}\label{uniformbounds}
			Consider the variational formulation 
			\begin{equation} \label{1001}
			\begin{gathered}
			\int_{R^\varepsilon} \left\{ \left|\nabla u_\varepsilon\right|^{p-2}\nabla u_\varepsilon\nabla\varphi+\left|u_\varepsilon\right|^{p-2} u_\varepsilon\varphi \right\} dxdy =\dfrac{1}{\varepsilon^\gamma}\int_{\mathcal{O^\varepsilon}}f^\varepsilon\varphi dxdy, \; \varphi\in W^{1,p}(R^\varepsilon)
			\end{gathered}
			\end{equation}
			with $f^\varepsilon$ satisfying  
			$$\dfrac{1}{\varepsilon^{\gamma}}\left|\left|\left|f^\varepsilon\right|\right|\right|_{L^{p'}(\mathcal{O}^\varepsilon)}^{p'}\leq c$$
			for some positive constant $c$ independent of $\varepsilon>0$.
			
			Then, there exists $c_0>0$, also independent of $\varepsilon$ such that 
			$$
						\left|\left|\left|u_\varepsilon\right|\right|\right|_{W^{1,p}(R^\varepsilon)}\leq c_0.
			$$
			In particular
			\begin{equation*}
			\begin{gathered}
			\left|\left|\left|u_\varepsilon\right|\right|\right|_{L^{p}(R^\varepsilon)}\leq c_0 \quad  \textrm{ and } \quad 
			\left|\left|\left|\left|\nabla u_\varepsilon\right|^{p-2}\nabla u_\varepsilon\right|\right|\right|_{L^{p'}(R^\varepsilon)}\leq c_0^{p/p'}.
			\end{gathered}
			\end{equation*}
		\end{proposition}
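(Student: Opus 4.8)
The plan is to run the standard energy argument for monotone operators: test the equation with the solution itself and then absorb the resulting self-referential term. Since $u_\varepsilon\in W^{1,p}(R^\varepsilon)$ is itself an admissible test function in \eqref{1001} (its existence and uniqueness being guaranteed by the strict monotonicity and bijectivity of the duality map in Proposition \ref{propositionduallitymap}), I would set $\varphi=u_\varepsilon$. The left-hand side then collapses to $\int_{R^\varepsilon}\{|\nabla u_\varepsilon|^{p}+|u_\varepsilon|^{p}\}\,dxdy=\|u_\varepsilon\|_{W^{1,p}(R^\varepsilon)}^{p}$, and after multiplying through by $\varepsilon^{-1}$ it becomes exactly $|||u_\varepsilon|||_{W^{1,p}(R^\varepsilon)}^{p}$. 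Everything thus reduces to estimating the concentrated functional on the right by this same rescaled norm, with a constant free of $\varepsilon$.

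For the right-hand side I would write it as $\frac{1}{\varepsilon^{\gamma+1}}\int_{\mathcal{O}^\varepsilon}f^\varepsilon u_\varepsilon\,dxdy$ and apply H\"older's inequality with exponents $p'$ and $p$, distributing the weight $\varepsilon^{-(\gamma+1)}$ as $\varepsilon^{-(\gamma+1)/p'}\cdot\varepsilon^{-(\gamma+1)/p}$ (legitimate since $1/p'+1/p=1$), to obtain
\[
|||u_\varepsilon|||_{W^{1,p}(R^\varepsilon)}^{p}\le\Big(\frac{1}{\varepsilon^{\gamma+1}}\int_{\mathcal{O}^\varepsilon}|f^\varepsilon|^{p'}\,dxdy\Big)^{1/p'}\Big(\frac{1}{\varepsilon^{\gamma+1}}\int_{\mathcal{O}^\varepsilon}|u_\varepsilon|^{p}\,dxdy\Big)^{1/p}.
\]
The first parenthesis equals $\frac{1}{\varepsilon^{\gamma+1}}\|f^\varepsilon\|_{L^{p'}(\mathcal{O}^\varepsilon)}^{p'}=\frac{1}{\varepsilon^{\gamma}}|||f^\varepsilon|||_{L^{p'}(\mathcal{O}^\varepsilon)}^{p'}$, which is $\le c$ by hypothesis; the second I would control by Proposition \ref{bound} with $q=p$, recast in rescaled form as $\frac{1}{\varepsilon^{\gamma+1}}\int_{\mathcal{O}^\varepsilon}|u_\varepsilon|^{p}\,dxdy\le C\,|||u_\varepsilon|||_{W^{1,p}(R^\varepsilon)}^{p}$. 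Substituting gives $|||u_\varepsilon|||_{W^{1,p}(R^\varepsilon)}^{p}\le c^{1/p'}C^{1/p}\,|||u_\varepsilon|||_{W^{1,p}(R^\varepsilon)}$, and cancelling one factor of the norm yields $|||u_\varepsilon|||_{W^{1,p}(R^\varepsilon)}^{p-1}\le c^{1/p'}C^{1/p}$, hence $|||u_\varepsilon|||_{W^{1,p}(R^\varepsilon)}\le c_0:=(c^{1/p'}C^{1/p})^{1/(p-1)}$, visibly independent of $\varepsilon$.

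The two particular estimates then follow by bookkeeping. The bound $|||u_\varepsilon|||_{L^{p}(R^\varepsilon)}\le c_0$ is immediate, since the $L^p$ part of the norm is dominated by the full $W^{1,p}$ one. For the flux I would use $\big||\nabla u_\varepsilon|^{p-2}\nabla u_\varepsilon\big|=|\nabla u_\varepsilon|^{p-1}$ together with $(p-1)p'=p$, so that $\big\||\nabla u_\varepsilon|^{p-2}\nabla u_\varepsilon\big\|_{L^{p'}(R^\varepsilon)}=\|\nabla u_\varepsilon\|_{L^{p}(R^\varepsilon)}^{p/p'}$; since $\|\nabla u_\varepsilon\|_{L^{p}(R^\varepsilon)}\le\|u_\varepsilon\|_{W^{1,p}(R^\varepsilon)}=\varepsilon^{1/p}|||u_\varepsilon|||_{W^{1,p}(R^\varepsilon)}\le\varepsilon^{1/p}c_0$, the exponents combine as $\varepsilon^{-1/p'}(\varepsilon^{1/p})^{p/p'}=1$ and the already proven bound turns into $|||\,|\nabla u_\varepsilon|^{p-2}\nabla u_\varepsilon\,|||_{L^{p'}(R^\varepsilon)}\le c_0^{p/p'}$.

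The genuinely delicate step is the second H\"older factor. Because $\mathcal{O}^\varepsilon$ is a strip whose Lebesgue measure is of order $\varepsilon^{1+\gamma}$, a naive estimate of $\frac{1}{\varepsilon^{\gamma+1}}\int_{\mathcal{O}^\varepsilon}|u_\varepsilon|^{p}$ by the bulk $L^{p}(R^\varepsilon)$ mass would degenerate as $\varepsilon\to 0$. What saves the argument is precisely the $\varepsilon$-uniform trace/concentration inequality of Proposition \ref{bound}, which trades the thin transversal thickness against the full gradient, and whose scaling must be tracked carefully so that the surviving constant $c_0$ carries no hidden power of $\varepsilon$. For $1<p<2$, where the clean $W^{1,p}(R^\varepsilon)$ form is unavailable, I would instead invoke the first, fractional-Sobolev, form of Proposition \ref{bound} with $q=p$ together with the $\varepsilon$-uniform embedding $W^{1,p}(R^\varepsilon)\hookrightarrow L^{p}((0,1);W^{s,p}(0,\varepsilon g(x/\varepsilon^\alpha)))$.
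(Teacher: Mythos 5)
Your proof is correct and takes essentially the same route as the paper's: test \eqref{1001} with $\varphi=u_\varepsilon$, apply H\"older's inequality to the concentrated right-hand side, bound one factor by the hypothesis on $f^\varepsilon$ and the other by Proposition \ref{bound}, and absorb the remaining power of the norm. Your $\varepsilon$-bookkeeping is in fact more explicit than the paper's own (which compresses the cancellation of the $\varepsilon^{1/p'}\cdot\varepsilon^{1/p}$ factors into a single display), and your closing remark on handling $1<p<2$ via the fractional form of Proposition \ref{bound} is consistent with how that proposition is proved.
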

		\begin{proof}
			Take $\varphi=u_\varepsilon$ in \eqref{1001}. Then, using Proposition \ref{bound}, 
			\begin{align*}
				\left|\left|u_\varepsilon\right|\right|_{W^{1,p}(R^\varepsilon)}^p  
				\leq \left(\dfrac{1}{\varepsilon^\gamma}\int_{\mathcal{O^\varepsilon}}|f^\varepsilon|^{p'}\right)^{1/p'} \left(\dfrac{1}{\varepsilon^\gamma}\int_{\mathcal{O^\varepsilon}}|u_\varepsilon|^{p}\right)^{1/p} 
				\leq c \, C \, |||u_\varepsilon|||_{W^{1,p}(R^\varepsilon)}.
			\end{align*}
			
			Hence, there exists $c_0>0$ such that 
			\begin{equation*}
			\left|\left|\left|u_\varepsilon\right|\right|\right|_{W^{1,p}(R^\varepsilon)}\leq c.
			\end{equation*}
			
			Therefore, $u_\varepsilon$ and $|\nabla u_\varepsilon|^{p-2}\nabla u_\varepsilon$ are respectively uniformly bounded in $L^p(R^\varepsilon)$ and $(L^{p'}(R^\varepsilon))^2$ under norm $\left|\left|\left|\cdot\right|\right|\right|$.
		\end{proof}
	
	Now, let us perform the asymptotic analysis for each $\alpha>0$.

	\subsection{The linear perturbation case}
	
	Let us now deal with the convergence of the problem \eqref{variational_l} at \eqref{TDs} and its dependence on $\alpha$. Combining convergence results from Section \ref{pre} and \cite{nakasato1,nakasato}, we obtain the following theorem.
	
	\begin{theorem}  \label{alpha1}
		Let $u_\varepsilon\in W^{1,p}(R^{\varepsilon})$ family of solutions given by \eqref{variational_l} at the thin domain \eqref{TDs}. Assume $f^\varepsilon\in L^{p'}(R^\varepsilon)$ satisfying 
		$$
		1/ \varepsilon^{\gamma} |||f^\varepsilon|||^{p'}_{L^{p'}(\mathcal{O}^\varepsilon)}\leq C
		$$ 
		for some $C>0$ independent of $\varepsilon>0$ and 
		$$
		\mathcal{T}_\varepsilon^g(\mathcal{T}_\varepsilon f^\varepsilon)\rightarrow \hat{f} \textrm{ weakly in } L^{p'}((0,1)\times(0,L_g)\times Y_h^*))
		$$ 
		for some $\hat{f}\in L^{p'}(0,1)$.
		
		Then there exists $u\in W^{1,p}(0,1)$ such that 
		$$\mathcal{T}_\varepsilon u_\varepsilon \to u \textrm{ strongly in } L^p(0,1;W^{1,p}(Y^*)),$$
		where $u$ is the unique solution of the homogenized equation \eqref{limite_s} satisfying
		\begin{equation}\label{limit_a00}
		\int_0^1q|\nabla u|^{p-2}\nabla u\nabla \varphi + |u|^{p-2} u \varphi = \int_0^1\bar{f}\varphi, \qquad \forall \varphi\in W^{1,p}(0,1)
		\end{equation}
		 with
		$$\bar{f}=\dfrac{1}{L_h|Y^{*}|}\int_{(0,L_g)\times Y_h^*}\hat{f}dy_1dZ$$
		such that:		
		\begin{enumerate}[(i)]
			\item if $\alpha = 1$ in \eqref{TDs}, we have
	$$q=\dfrac{1}{|Y^*|}\int_{Y^*}|\nabla v|^{p-2}\partial_{y_1}vdy_1dy_2$$
	where $Y^*$ is the representative cell of $R^\varepsilon$ and $v$ is the auxiliary function given by
	\begin{align}\label{thm3.2auxiliarproblem}
	\int_{Y^*}|\nabla v|^{p-2}\nabla v \nabla \varphi dy_1dy_2=0 \ \forall\varphi\in W_{\#}^{1,p}(Y^*) 
	\\(v-y_1)\in W_{\#}^{1,p}(Y^*) \text{ with } \langle(v-y_1)\rangle_{Y^*}=0. \nonumber
	\end{align}
	
	\item if $\alpha < 1$ in \eqref{TDs}, we have
	$$q=\dfrac{1}{\langle g\rangle_{(0,L_{g})}\langle 1/g^{p'-1}\rangle_{(0,L_{g})}^{p-1}}.$$
	\item if $\alpha > 1$ in \eqref{TDs}, we have
	$$q=\dfrac{g_{0}}{\langle g\rangle_{(0,L_{g})}}.$$
\end{enumerate}
	\end{theorem}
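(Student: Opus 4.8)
The plan is to combine the unfolding machinery of Section \ref{pre} with a Minty--Browder monotonicity argument, following the scheme for the $p$-Laplacian in thin domains developed in \cite{nakasato1, nakasato}; the genuinely new ingredient is the treatment of the concentrated right-hand side through the iterated operator $\mathcal{T}_\varepsilon^g$. First I would invoke Proposition \ref{uniformbounds} to get $|||u_\varepsilon|||_{W^{1,p}(R^\varepsilon)}\leq c_0$, so that by Proposition \ref{unfoldingproperties} the sequences $\mathcal{T}_\varepsilon u_\varepsilon$, $\mathcal{T}_\varepsilon(\partial_x u_\varepsilon)$, $\mathcal{T}_\varepsilon(\partial_y u_\varepsilon)$ are bounded in $L^p((0,1)\times Y^*)$ and $|\nabla u_\varepsilon|^{p-2}\nabla u_\varepsilon$ is bounded in $L^{p'}$. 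Extracting a subsequence gives weak limits $\mathcal{T}_\varepsilon u_\varepsilon\rightharpoonup u$, $\mathcal{T}_\varepsilon(\partial_x u_\varepsilon)\rightharpoonup \xi_1$, $\mathcal{T}_\varepsilon(\partial_y u_\varepsilon)\rightharpoonup \xi_2$. The gradient scaling identities $\partial_{y_1}\mathcal{T}_\varepsilon u_\varepsilon=\varepsilon^\alpha\mathcal{T}_\varepsilon(\partial_x u_\varepsilon)$ and $\partial_{y_2}\mathcal{T}_\varepsilon u_\varepsilon=\varepsilon\mathcal{T}_\varepsilon(\partial_y u_\varepsilon)$ force $\partial_{y_1}u=\partial_{y_2}u=0$, so $u=u(x)\in W^{1,p}(0,1)$; a Rellich-type compactness argument then yields $\mathcal{T}_\varepsilon u_\varepsilon\to u$ strongly in $L^p((0,1)\times Y^*)$, the stronger convergence in $L^p((0,1);W^{1,p}(Y^*))$ claimed in the theorem being deferred to the monotonicity step.

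The second step is to pin down the two-scale structure of $(\xi_1,\xi_2)$, which is where the three regimes separate. For $\alpha=1$ I expect, by the two-scale/corrector argument of \cite{nakasato1, nakasato}, a periodic corrector $\hat u\in L^p((0,1);W^{1,p}_\#(Y^*))$ with $\xi_1=u'(x)+\partial_{y_1}\hat u$ and $\xi_2=\partial_{y_2}\hat u$; taking $\hat u=u'(x)(v-y_1)$ with $v$ solving \eqref{thm3.2auxiliarproblem} realizes the effective flux and produces $q=|Y^*|^{-1}\int_{Y^*}|\nabla v|^{p-2}\partial_{y_1}v\,dy_1dy_2$. For $0<\alpha<1$ the thin scale is subordinate to the oscillation, the corrector lives only in $y_1$, and the cell problem collapses to a one-dimensional minimization in $g$, giving the harmonic-type mean $q=(\langle g\rangle_{(0,L_g)}\langle 1/g^{p'-1}\rangle_{(0,L_g)}^{p-1})^{-1}$; for $\alpha>1$ the fast oscillation makes the profile act through its minimum, yielding $q=g_0/\langle g\rangle_{(0,L_g)}$. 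In each case the point is to exhibit an admissible corrector together with the associated limiting flux $\sigma$.

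Next I would unfold the variational identity \eqref{variational_l}. Property~2 of Proposition \ref{unfoldingproperties} together with the unfolding criterion for integrals (property~5) turns the left-hand side, up to remainders that vanish by u.c.i., into an integral over $(0,1)\times Y^*$ of $\mathcal{T}_\varepsilon(|\nabla u_\varepsilon|^{p-2}\nabla u_\varepsilon)\cdot\mathcal{T}_\varepsilon(\nabla\varphi_\varepsilon)$ plus the lower-order term $|\mathcal{T}_\varepsilon u_\varepsilon|^{p-2}\mathcal{T}_\varepsilon u_\varepsilon\,\mathcal{T}_\varepsilon\varphi_\varepsilon$, which passes to the limit by the strong convergence just obtained. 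The concentrated right-hand side is dealt with exactly by Corollary \ref{conv_lin}, giving $\varepsilon^{-\gamma}\int_{\mathcal{O}^\varepsilon}f^\varepsilon\varphi\to\int_0^1\bar f\varphi$ with $\bar f$ as stated. Testing against functions $\varphi_\varepsilon$ of the form $\varphi(x)$ plus an appropriately scaled corrector and passing to the limit yields the limit variational problem written in terms of $\sigma$ and $(\xi_1,\xi_2)$.

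The main obstacle, and the final step, is to identify the weak limit of the nonlinear flux $\mathcal{T}_\varepsilon(|\nabla u_\varepsilon|^{p-2}\nabla u_\varepsilon)\rightharpoonup\sigma$ as the $p$-Laplacian flux of the limit gradient. For this I would run the Minty--Browder trick: take $\varphi=u_\varepsilon$ in \eqref{variational_l} to get the energy identity, pass to the limit in the energy using the already-established convergence of the right-hand side, and then exploit the monotonicity inequality $\int(|\mathcal{T}_\varepsilon\nabla u_\varepsilon|^{p-2}\mathcal{T}_\varepsilon\nabla u_\varepsilon-|\Psi|^{p-2}\Psi)\cdot(\mathcal{T}_\varepsilon\nabla u_\varepsilon-\Psi)\geq 0$, valid for arbitrary $\Psi$, to conclude that $\sigma$ is the flux of the limit gradient. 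Substituting the corrector from the second step eliminates the microscopic variables and leaves precisely \eqref{limit_a00} with the coefficient $q$ of the corresponding regime, while convergence of the energies together with the uniform convexity of $L^p$ upgrades the weak gradient convergence to the strong convergence $\mathcal{T}_\varepsilon u_\varepsilon\to u$ in $L^p((0,1);W^{1,p}(Y^*))$ asserted in the statement. Finally, since the limit operator is strictly monotone and bijective (Proposition \ref{propositionduallitymap}, Minty--Browder), \eqref{limit_a00} has a unique solution, so the whole family---not merely a subsequence---converges, completing the proof.
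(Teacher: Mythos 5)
Your proposal is correct in outline, but it takes a substantially more self-contained route than the paper does. The paper's own proof is short: it unfolds \eqref{variational_l} via Propositions \ref{unfoldingproperties}, \ref{iteration} and \ref{unfoldingpropertiesg}, disposes of the remainder terms on $R_1^\varepsilon$, $\Lambda_\varepsilon^h\times Y^*_\varepsilon(x)$ and $\mathcal{O}_1^\varepsilon$ by the u.c.i.\ results (property \ref{uci} of Proposition \ref{unfoldingproperties}, Proposition \ref{uciO1}, Proposition \ref{propboundnessuci}(b)), handles the concentrated term by Corollary \ref{conv_lin}, and then cites \cite[Theorem 3.1]{nakasato1} (and its analogues for $\alpha<1$ and $\alpha>1$) wholesale for everything concerning the left-hand side: the strong convergence $\mathcal{T}_\varepsilon u_\varepsilon\to u$ in $L^p(0,1;W^{1,p}(Y^*))$, the identification of the nonlinear flux, and the three formulas for $q$. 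What you propose instead is to re-derive that cited theorem from scratch --- uniform bounds, weak limits of unfolded gradients, the corrector/cell-problem structure in each regime, Minty--Browder for the flux, energy convergence for strong convergence --- while treating the concentrated right-hand side exactly as the paper does. Your route buys independence from \cite{nakasato1} at the cost of considerable length and of having to justify nontrivial steps (the Rellich-type compactness of unfolded sequences, and especially the corrector structure in the case $\alpha>1$, where the appearance of $g_0$ is not given by a standard cell corrector); the paper's route buys brevity at the cost of outsourcing the core homogenization.

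One concrete flaw in your ordering: the energy identity you need for Minty--Browder has right-hand side $\varepsilon^{-\gamma}\int_{\mathcal{O}^\varepsilon}f^\varepsilon u_\varepsilon\,dxdy$, and Corollary \ref{conv_lin} does not apply to it, since that corollary requires a fixed test function $\varphi\in L^p(0,1)$ whereas here the test function is $u_\varepsilon$ itself. To pass to the limit there you need $\mathcal{T}_\varepsilon^g(\mathcal{T}_\varepsilon u_\varepsilon)\to u$ strongly, which by the corollary following Proposition \ref{conv_unf} requires $\mathcal{T}_\varepsilon u_\varepsilon\to u$ strongly in $L^p((0,1);W^{1,p}(Y^*))$ --- precisely the convergence you defer until after the monotonicity step, so as written the argument is circular. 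The fix is available inside your own scheme: your Rellich step already gives $\mathcal{T}_\varepsilon u_\varepsilon\to u$ strongly in $L^p((0,1)\times Y^*)$, and since $\nabla_y\mathcal{T}_\varepsilon u_\varepsilon=(\varepsilon^\alpha\mathcal{T}_\varepsilon(\partial_x u_\varepsilon),\varepsilon\mathcal{T}_\varepsilon(\partial_y u_\varepsilon))\to 0$ strongly in $L^p$ (uniform bounds times vanishing factors, and $\nabla_y u=0$), the strong Bochner convergence holds \emph{before} the Minty--Browder step; with it, the energy passage closes. A second, smaller omission: when you test with $\varphi(x)$ plus a scaled corrector, the corrector part also enters the concentrated term; it is $O(\varepsilon^{\min(1,\alpha)})$ by Proposition \ref{bound} and H\"older's inequality, hence harmless, but this should be said explicitly.
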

	\begin{proof}	
		From Propositions \ref{unfoldingproperties}, \ref{iteration} and \ref{unfoldingpropertiesg}, we can rewrite \eqref{variational} as
		\begin{align}
		\label{var=1}
		\nonumber\int_{(0,1)\times Y^*}&\mathcal{T}_\varepsilon(|\nabla u_\varepsilon|^{p-2}\nabla u_\varepsilon)\mathcal{T}_\varepsilon\nabla\varphi dxdy_1dy_2+\dfrac{L_g}{\varepsilon}\int_{R_1^\varepsilon}|\nabla u_\varepsilon|^{p-2}\nabla u_\varepsilon\nabla \varphi dxdy \ + \\ & \nonumber + \int_{(0,1)\times Y^*}\mathcal{T}_\varepsilon(|u_\varepsilon|^{p-2} u_\varepsilon)\mathcal{T}_\varepsilon\varphi dxdy_1dy_2+\dfrac{L_g}{\varepsilon}\int_{R_1^\varepsilon}|u_\varepsilon|^{p-2} u_\varepsilon \varphi dxdy  \\  = \ & \dfrac{1}{L_h}\int_0^1\int_0^{L_g}\int_{Y_h^*}\mathcal{T}_\varepsilon^g(\mathcal{T}_\varepsilon f^\varepsilon)\mathcal{T}_\varepsilon^g(\mathcal{T}_\varepsilon \varphi)dz_1dz_2dy_1dx\ +\nonumber \\ & +\dfrac{1}{\varepsilon^\gamma}\int_{\Lambda_\varepsilon^h\times Y^*_\varepsilon(x)}\mathcal{T}_\varepsilon f^\varepsilon \mathcal{T}_\varepsilon\varphi dy_1dy_2dx+\dfrac{L_g}{\varepsilon^{\gamma+1}}\int_{\mathcal{O}^\varepsilon_1}f^\varepsilon\varphi dxdy
		\end{align}
		
		Using property \ref{uci} from Proposition \ref{unfoldingproperties}, we have
		$$\dfrac{L_g}{\varepsilon}\int_{R_1^\varepsilon}|\nabla u_\varepsilon|^{p-2}\nabla u_\varepsilon\nabla \varphi dxdy\to 0
		\ \text{ and } \
		\dfrac{L_g}{\varepsilon}\int_{R_1^\varepsilon}|u_\varepsilon|^{p-2} u_\varepsilon \varphi dxdy\to 0$$
		when $\varepsilon\to0$.
		
		Consequently, it follows from \cite[Theorem 3.1]{nakasato1} then there exists $u\in W^{1,p}(0,1)$ such that $\mathcal{T}_\varepsilon u_\varepsilon \to u$ strongly in $L^p(0,1;W^{1,p}(Y^*))$ and
		\begin{align}
		\int_{(0,1)\times Y^*}&\mathcal{T}_\varepsilon(|\nabla u_\varepsilon|^{p-2}\nabla u_\varepsilon)\mathcal{T}_\varepsilon\nabla\varphi dxdy_1dy_2+\dfrac{L_g}{\varepsilon}\int_{R_1^\varepsilon}|\nabla u_\varepsilon|^{p-2}\nabla u_\varepsilon\nabla \varphi dxdy \ + \nonumber \\ & + \int_{(0,1)\times Y^*}\mathcal{T}_\varepsilon(|u_\varepsilon|^{p-2} u_\varepsilon)\mathcal{T}_\varepsilon\varphi dxdy_1dy_2+\dfrac{L_g}{\varepsilon}\int_{R_1^\varepsilon}|u_\varepsilon|^{p-2} u_\varepsilon \varphi dxdy \nonumber \\ & \to |Y^{*}|\int_0^1(q|\partial_xu|^{p-2}\partial_xu\partial_x\varphi+|u|^{p-2}u\varphi )dx, \forall \varphi\in W^{1,p}(0,1),
		\end{align}
		with 
		$$q=\dfrac{1}{|Y^*|}\int_{Y^*}|\nabla v|^{p-2}\partial_{y_1}vdy_1dy_2$$
		where $Y^*$ is the representative cell and $v$ is an auxiliary function given by \eqref{thm3.2auxiliarproblem}.		
		%
		%
		
		Analogously, we can use Proposition \ref{uciO1} to get 
		$$\dfrac{L_g}{\varepsilon^{\gamma+1}}\int_{\mathcal{O}^\varepsilon_1}f^\varepsilon\varphi dxdy\to 0,$$
		and Proposition \ref{propboundnessuci} item (b) to conclude 
		$$\dfrac{1}{\varepsilon^\gamma}\int_{\Lambda_\varepsilon^h\times Y^*_\varepsilon(x)}\mathcal{T}_\varepsilon f^\varepsilon \mathcal{T}_\varepsilon\varphi dy_1dy_2dx\to 0$$
		as $\varepsilon\to0$.
		
		Hence, it remains analyzing the behavior of 
		$$\dfrac{1}{L_h}\int_0^1\int_0^{L_g}\int_{Y_h^*}\mathcal{T}_\varepsilon^g(\mathcal{T}_\varepsilon f^\varepsilon)\mathcal{T}_\varepsilon^g(\mathcal{T}_\varepsilon \varphi)dz_1dz_2dy_1dx$$
		to conclude the proof. Since, by hypothesis, we have that there exists	$\hat{f}\in L^{p'}(0,1)$ such that $\mathcal{T}_\varepsilon^g(\mathcal{T}_\varepsilon f^\varepsilon)\rightarrow \hat{f}$ weakly in $L^{p'}((0,1)\times(0,L_g)\times Y_h^*))$, we obtain		
		by Corollary \ref{conv_lin} that there exists $\bar{f}\in L^{p'}(0,1)$ such that
		\begin{align*}
		\dfrac{1}{L_h}&\int_0^1\int_0^{L_g}\int_{Y_h^*}\mathcal{T}_\varepsilon^g(\mathcal{T}_\varepsilon f^\varepsilon)\mathcal{T}_\varepsilon^g(\mathcal{T}_\varepsilon \varphi)dZdy_1dx \to \int_0^1\bar{f} \varphi  dx
		\end{align*}
		with $$\bar{f}=\dfrac{1}{L_h|Y^{*}|}\int_{(0,L_g)\times Y_h^*}\hat{f}dy_1dZ.$$
		
		Now, passing to the limit in \eqref{var=1}, we get
		\begin{align*}
		|Y^{*}|\int_0^1(q|\partial_xu|^{p-2}\partial_xu\partial_x\varphi+|u|^{p-2}u\varphi )dx=|Y^{*}|\int_0^1 \bar{f} \varphi  dx, \forall \varphi\in W^{1,p}(0,1),
		\end{align*}
		which implies \eqref{limit_a00} leading us to the end of the proof.
	\end{proof}
	\section{A nonlinear perturbation}  \label{nonlinear}

	Now, let us consider the variational formulation of \eqref{problem_s}, with $p\geq 2$, under the condition ($\mathbf{H_f}$): 
	\begin{equation}\label{variational}\int_{R^\varepsilon} \{|\nabla u_\varepsilon|^{p-2}\nabla u_\varepsilon\nabla \varphi+|u_\varepsilon|^{p-2}u_\varepsilon\varphi \}dxdy=\dfrac{1}{\varepsilon^\gamma}\int_{\mathcal{O}^\varepsilon}f(u_\varepsilon)\varphi dxdy, 
	\quad \forall \varphi \in W^{1,p}(R^\varepsilon). \end{equation}
	Here, we see that the homogenized equation for the thin domain problem \eqref{variational} is the one-dimensional equation \eqref{limite_s}  given by
	\begin{equation}\label{variational_lim}\int_{0}^{1} \{q|u'|^{p-2}u' \varphi'+|u|^{p-2}u\varphi \}dx =\dfrac{\langle h\rangle_{(0,L_h)}}{\langle g\rangle_{(0,L_g)}}\int_0^1 f(u)\varphi dx, 
\quad \forall \varphi \in W^{1,p}(0,1). \end{equation}

	\subsection{Existence of solutions}
	
	Using the notation of Proposition \ref{propositionduallitymap}, we can rewrite problem \eqref{variational} in the following way
	$$
	J u_\varepsilon =F_\varepsilon(u_\varepsilon),
	$$
	where, for $1-1/p<s<1$, 
	\begin{align}\label{def_f}
	F_\varepsilon : W^{1,p}(R^\varepsilon)&\to (L^p((0,1);W^{s,p}(0,\varepsilon g(x/\varepsilon^\alpha)))' \nonumber \\
	u_\varepsilon & \mapsto F_\varepsilon(u_\varepsilon) : L^p((0,1);W^{s,p}(0,\varepsilon g(x/\varepsilon^\alpha))) \to \mathbb{R} \\ & \qquad \qquad \qquad \qquad  \varphi_\varepsilon \mapsto \langle F_\varepsilon(u_\varepsilon),\varphi_\varepsilon\rangle=\dfrac{1}{\varepsilon^\gamma}\int_{\mathcal{O}^\varepsilon}f(u_\varepsilon)\varphi_\varepsilon. \nonumber 
	\end{align}
%
	Arguing as \cite[Proposition 6]{Nogueira} and \cite[Proposition 4.1]{AMA-thin}, one can prove:
	\begin{proposition}\label{f}
	If we call the spaces $$Z_\varepsilon=L^p((0,1);W^{s,p}(0,\varepsilon g(x/\varepsilon^\alpha))) \text{ and } Z_\varepsilon'= (L^{p}((0,1);W^{s,p}(0,\varepsilon g(x/\varepsilon^\alpha))))',$$ the function $F_\varepsilon$ defined in \eqref{def_f} has the following properties:
	\begin{enumerate}
	    \item there exists $K>0$ independent of $\varepsilon$ such that $$\sup_{u_\varepsilon\in W^{1,p}(R^\varepsilon)}\|F_\varepsilon(u_\varepsilon)\|_{Z_\varepsilon'}\leq K$$
	    \item $F_\varepsilon$ is a Lipschitz application and there exists $L>0$ independent of $\varepsilon$ such that
	    $$\|F_\varepsilon(u_\varepsilon)-F_\varepsilon(v_\varepsilon)\|_{Z_\varepsilon'}\leq L \|u_\varepsilon-v_\varepsilon\|_{W^{1,p}(R^\varepsilon)}$$
	\end{enumerate}
	\end{proposition}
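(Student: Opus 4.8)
The plan is to reduce both assertions to estimates on the concentrated integral $\frac{1}{\varepsilon^\gamma}\int_{\mathcal{O}^\varepsilon}(\cdot)\,dxdy$ defining $F_\varepsilon$, combining H\"older's inequality with respect to the measure $\varepsilon^{-\gamma}\,dxdy$ on $\mathcal{O}^\varepsilon$ with the trace-type bound of Proposition \ref{bound}; the hypothesis $(\mathbf{H_f})$ supplies the control on $f$ and $f'$. The whole point is that Proposition \ref{bound} provides constants independent of $\varepsilon$, so the resulting $K$ and $L$ will be uniform as well.

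For the uniform bound (item 1), I would fix $u_\varepsilon\in W^{1,p}(R^\varepsilon)$ and a test function $\varphi_\varepsilon\in Z_\varepsilon$ and estimate $|\langle F_\varepsilon(u_\varepsilon),\varphi_\varepsilon\rangle|$. Since $f$ is bounded by $(\mathbf{H_f})$, we have $|f(u_\varepsilon)|\le \|f\|_\infty$ pointwise, whence
\begin{equation*}
|\langle F_\varepsilon(u_\varepsilon),\varphi_\varepsilon\rangle|\le \|f\|_\infty\,\frac{1}{\varepsilon^\gamma}\int_{\mathcal{O}^\varepsilon}|\varphi_\varepsilon|\,dxdy.
\end{equation*}
Applying Proposition \ref{bound} with $q=1$, together with the embedding $L^p(0,1)\hookrightarrow L^1(0,1)$, bounds the right-hand side by $C\|f\|_\infty\,\|\varphi_\varepsilon\|_{Z_\varepsilon}$ with $C$ independent of $\varepsilon$. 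Taking the supremum over $\|\varphi_\varepsilon\|_{Z_\varepsilon}\le 1$ and then over $u_\varepsilon$ yields $\|F_\varepsilon(u_\varepsilon)\|_{Z_\varepsilon'}\le K:=C\|f\|_\infty$.

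For the Lipschitz property (item 2), I would write
\begin{equation*}
\langle F_\varepsilon(u_\varepsilon)-F_\varepsilon(v_\varepsilon),\varphi_\varepsilon\rangle=\frac{1}{\varepsilon^\gamma}\int_{\mathcal{O}^\varepsilon}\bigl(f(u_\varepsilon)-f(v_\varepsilon)\bigr)\varphi_\varepsilon\,dxdy,
\end{equation*}
and use the mean value theorem with the boundedness of $f'$ to obtain $|f(u_\varepsilon)-f(v_\varepsilon)|\le \|f'\|_\infty|u_\varepsilon-v_\varepsilon|$. H\"older's inequality in the conjugate exponents $(p',p)$ then gives
\begin{equation*}
|\langle F_\varepsilon(u_\varepsilon)-F_\varepsilon(v_\varepsilon),\varphi_\varepsilon\rangle|\le \|f'\|_\infty\Bigl(\frac{1}{\varepsilon^\gamma}\int_{\mathcal{O}^\varepsilon}|u_\varepsilon-v_\varepsilon|^{p'}\Bigr)^{1/p'}\Bigl(\frac{1}{\varepsilon^\gamma}\int_{\mathcal{O}^\varepsilon}|\varphi_\varepsilon|^{p}\Bigr)^{1/p}.
\end{equation*}
The $\varphi_\varepsilon$-factor is controlled by $C^{1/p}\|\varphi_\varepsilon\|_{Z_\varepsilon}$ via the first part of Proposition \ref{bound}. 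Since $p\ge 2$ forces $p'\le p$, the second part of Proposition \ref{bound} applies with $q=p'$ to the difference $u_\varepsilon-v_\varepsilon$, bounding the first factor by $C^{1/p'}\|u_\varepsilon-v_\varepsilon\|_{W^{1,p}(R^\varepsilon)}$. Combining and taking the supremum over $\|\varphi_\varepsilon\|_{Z_\varepsilon}\le1$ produces the claimed estimate with $L:=C\|f'\|_\infty$.

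The only delicate point, and the reason the argument works, is that every estimate on the singular integral must carry a constant that does not blow up as the strip $\mathcal{O}^\varepsilon$ collapses; this is exactly what Proposition \ref{bound} guarantees. The restriction $p\ge 2$ is essential precisely here: it ensures $p'\le p$ so that the $W^{1,p}(R^\varepsilon)$-controlled version of Proposition \ref{bound} can be invoked on $|u_\varepsilon-v_\varepsilon|^{p'}$; for $1<p<2$ one would have $p'>p$ and this step would fail. The overall structure mirrors \cite[Proposition 6]{Nogueira} and \cite[Proposition 4.1]{AMA-thin}, the new ingredient being the geometry of the rough thin strip encoded in Proposition \ref{bound}.
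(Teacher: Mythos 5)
Your proof is correct and takes essentially the same approach as the paper's: both items reduce to H\"older's inequality, the bounds on $f$ and $f'$ from ($\mathbf{H_f}$), and the $\varepsilon$-uniform concentration estimate of Proposition \ref{bound}, with the restriction $p\geq 2$ entering exactly where you say it does (to guarantee $p'\leq p$ so that the $W^{1,p}(R^\varepsilon)$-version of Proposition \ref{bound} applies to $u_\varepsilon-v_\varepsilon$ in item 2). The only cosmetic difference is in item 1, where you bound $|f(u_\varepsilon)|$ pointwise and then invoke Proposition \ref{bound} with $q=1$ plus H\"older on $(0,1)$, whereas the paper first applies H\"older with exponents $(p',p)$ and controls the $f$-factor by the relative measure of $\mathcal{O}^\varepsilon$; the two orderings are interchangeable.
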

	\begin{proof}
	\begin{enumerate}
	    \item For $u_\varepsilon\in Z_\varepsilon$, $$\|F_\varepsilon(u_\varepsilon)\|_{Z_\varepsilon'}=\sup_{ \|\varphi_\varepsilon\|_{Z_\varepsilon}=1}|\langle F_\varepsilon(u_\varepsilon),\varphi_\varepsilon \rangle|.$$
	    
	    Then, using that $f$ is bounded and by Proposition \ref{bound}, if $q$ is the conjugate of $p$,
	    \begin{align*}
	        \dfrac{1}{\varepsilon^\gamma}\int_{\mathcal{O}^\varepsilon}&|f(u_\varepsilon)\varphi_\varepsilon| \leq \left(\dfrac{1}{\varepsilon^{\gamma}}\int_{\mathcal{O}^\varepsilon}|f(u_\varepsilon)|^q\right)^{1/q}\left(\dfrac{1}{\varepsilon^{\gamma}}\int_{\mathcal{O}^\varepsilon}|\varphi_\varepsilon|^p\right)^{1/p} \\ & \leq C\sup_{x\in\mathbb{R}}|f(x)|h_1^{1/q}\|\varphi_\varepsilon\|_{Z_\varepsilon}
	    \end{align*}
	    that implies 
	    $$\|F_\varepsilon(u_\varepsilon)\|_{Z_\varepsilon'}\leq \sup_{x\in\mathbb{R}}|f(x)|h_1^{1/q}.$$
	    Thus there exists $K>0$ such that 
	    $$\sup_{u_\varepsilon\in W^{1,p}(R^\varepsilon)}\|F_\varepsilon(u_\varepsilon)\|_{Z_\varepsilon'}\leq K$$
	    
	    \item Using now $f'$ bounded, the Mean Value Theorem and Proposition \ref{bound}, if $q$ is the conjugate of $p$,
	    \begin{align*}\dfrac{1}{\varepsilon^\gamma}\int_{\mathcal{O}^\varepsilon}&|f(u_\varepsilon)\varphi_\varepsilon-f(v_\varepsilon)\varphi_\varepsilon|  \leq \dfrac{1}{\varepsilon^{\gamma}}\int_{\mathcal{O}^\varepsilon}|f(u_\varepsilon)-f(v_\varepsilon)||\varphi_\varepsilon| \\ &  \leq \left(\dfrac{1}{\varepsilon^{\gamma}}\int_{\mathcal{O}^\varepsilon}\sup_{x\in\mathbb{R}}|f'(x)|^q|u_\varepsilon - v_\varepsilon|^q\right)^{1/q}\left(\dfrac{1}{\varepsilon^{\gamma}}\int_{\mathcal{O}^\varepsilon}|\varphi_\varepsilon|^p\right)^{1/p} \\ & \leq C\sup_{x\in\mathbb{R}}|f'(x)| \,  \|u_\varepsilon-v_\varepsilon\|_{W^{1,p}(R^\varepsilon)} \|\varphi_\varepsilon\|_{Z_\varepsilon}
	    \end{align*}
	    Therefore for some $L>0$ we obtain
	    $$\|F_\varepsilon(u_\varepsilon)-F_\varepsilon(v_\varepsilon)\|_{Z_\varepsilon'}\leq L \|u_\varepsilon-v_\varepsilon\|_{W^{1,p}(R^\varepsilon)}.$$
	   \end{enumerate}
	\end{proof}
	
	By Proposition \ref{propositionduallitymap}, $J$ is  bijective, with inverse bounded, continuous and monotone. Using Proposition \ref{f}, the application $F_\varepsilon$ is Lipschitz continuous with constant independent of $\varepsilon$. Thus, $u_\varepsilon\in W^{1,p}(R^\varepsilon)$ is a solution of \eqref{variational} if, and only if, $u_\varepsilon=J^{-1}F_\varepsilon(u_\varepsilon)$. This means that $u_\varepsilon$ is a fixed point of $J^{-1} F_\varepsilon$. Also, one gets that $J^{-1} F_\varepsilon$ is a compact and continuous operator since $W^{1,p}(R^\varepsilon)\subset L^p((0,1);W^{s,p}(0,\varepsilon g(x/\varepsilon^\alpha)))$ is a compact inclusion. Thus, by the Schaefer's Fixed Point Theorem, we get the existence of solutions for each $\varepsilon>0$.
	Analogously, for our limiting problem the existence of solutions follows considering \eqref{variational_lim} as
	$$
	J u =F_0(u),
	$$
	where
	\begin{align*}
	F_0 : W^{1,p}(0,1)&\to (L^p(0,1))'  \\
	u & \mapsto F_0(u) : L^{p'}(0,1) \to \mathbb{R} \\ & \qquad \qquad \qquad  \varphi \mapsto \langle F_0(u),\varphi\rangle=\dfrac{\langle h\rangle_{(0,L_h)}}{\langle g\rangle_{(0,L_g)}}\int_0^1f(u)\varphi.  
	\end{align*}

	\subsection{Convergence of solutions}
		
	Our main goal now is to pass the limit, as $\varepsilon\to0$, in equation \eqref{variational} analyzing the different effects of the oscillatory boundary depending on order $\alpha$. Using convergence results from \cite{nakasato1,nakasato} we have:
		\begin{theorem}\label{conv_nl}
		Let $u_\varepsilon\in W^{1,p}(R)$ solution of the problem \eqref{variational}. 	
		Then, there exists $u\in W^{1,p}(0,1)$, such that 
			$$\mathcal{T}_\varepsilon u_\varepsilon \to u \textrm{ strongly in } L^p(0,1;W^{1,p}(Y^*)),$$
         where $u$ is the unique solution of the problem \eqref{variational_lim} such that  
		\begin{enumerate}[(i)]
			\item if $\alpha = 1$ in \eqref{TDs}, we have
	$$q=\dfrac{1}{|Y^*|}\int_{Y^*}|\nabla v|^{p-2}\partial_{y_1}vdy_1dy_2$$
	where $Y^*$ is the representative cell of $R^\varepsilon$ and $v$ is the auxiliary function given by
	\begin{align}\label{thm4.1auxiliarproblem}
	\int_{Y^*}|\nabla v|^{p-2}\nabla v \nabla \varphi dy_1dy_2=0 \ \forall\varphi\in W_{\#}^{1,p}(Y^*) 
	\\	(v-y_1)\in W_{\#}^{1,p}(Y^*) \text{ with } \langle(v-y_1)\rangle_{Y^*}=0. \nonumber
	\end{align}

	\item If $\alpha < 1$ in \eqref{TDs}, we have
	$$q=\dfrac{1}{\langle g\rangle_{(0,L_{g})}\langle 1/g^{p'-1}\rangle_{(0,L_{g})}^{p-1}}.$$
	\item If $\alpha > 1$ in \eqref{TDs}, we have
	$$q=\dfrac{g_{0}}{\langle g\rangle_{(0,L_{g})}}.$$
\end{enumerate}
		\end{theorem}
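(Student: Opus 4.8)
The plan is to mirror the proof of Theorem~\ref{alpha1}, exploiting that the boundedness of $f$ lets one treat the reaction term $f(u_\varepsilon)$ as an external forcing for the \emph{linear} theory, and only afterwards use the strong unfolded convergence to identify its limit. First I would record the uniform a~priori bound. Since $f$ is bounded, the forcing functional $\varphi\mapsto \varepsilon^{-\gamma}\int_{\mathcal{O}^\varepsilon}f(u_\varepsilon)\varphi$ is uniformly bounded (this is Proposition~\ref{f}(1), which rests on Proposition~\ref{bound}); in particular the hypothesis of Proposition~\ref{uniformbounds} holds with $f^\varepsilon:=f(u_\varepsilon)$. Taking $\varphi=u_\varepsilon$ in \eqref{variational} and repeating that estimate yields $|||u_\varepsilon|||_{W^{1,p}(R^\varepsilon)}\le c_0$ uniformly in $\varepsilon$.

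Next, because $f$ is bounded the sequence $f(u_\varepsilon)$ is bounded in $L^\infty(R^\varepsilon)\subset L^{p'}(R^\varepsilon)$, so by Proposition~\ref{unfoldingpropertiesg}(c) the sequence $\mathcal{T}_\varepsilon^g(\mathcal{T}_\varepsilon f(u_\varepsilon))$ is bounded in $L^{p'}((0,1)\times(0,L_g)\times Y^*_h)$. Passing to a subsequence I may assume $\mathcal{T}_\varepsilon^g(\mathcal{T}_\varepsilon f(u_\varepsilon))\rightharpoonup \hat f$ weakly for some $\hat f\in L^{p'}$. Applying Theorem~\ref{alpha1} with the forcing $f^\varepsilon=f(u_\varepsilon)$ then produces $u\in W^{1,p}(0,1)$ with
\begin{equation*}
\mathcal{T}_\varepsilon u_\varepsilon\to u\quad\text{strongly in }L^p(0,1;W^{1,p}(Y^*)),
\end{equation*}
where $u$ solves the linear homogenized equation \eqref{limit_a00} and the coefficient $q$ is given by (i), (ii) or (iii) according to $\alpha=1$, $\alpha<1$ or $\alpha>1$; these three formulas are exactly the outputs of \cite[Theorems 3.1, 3.2, 3.3]{nakasato1} invoked in the proof of Theorem~\ref{alpha1}.

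It remains to upgrade $u$ to a solution of the nonlinear limit \eqref{variational_lim}. The strong convergence just obtained propagates, by the corollary following Proposition~\ref{conv_unf}, to $\mathcal{T}_\varepsilon^g(\mathcal{T}_\varepsilon u_\varepsilon)\to u$ strongly in $L^p((0,1)\times(0,L_g)\times Y^*_h)$, so Proposition~\ref{conv_f} applies with $\varphi_\varepsilon=u_\varepsilon$ and, using that $f(u)\varphi$ is independent of $(y_1,Z)$ together with $|Y^*_h|=L_h\langle h\rangle_{(0,L_h)}$, gives
\begin{equation*}
\dfrac{1}{L_h}\int_0^1\!\int_0^{L_g}\!\int_{Y^*_h}\mathcal{T}_\varepsilon^g(\mathcal{T}_\varepsilon(f(u_\varepsilon)))\,\mathcal{T}_\varepsilon^g(\mathcal{T}_\varepsilon\varphi)\,dZ\,dy_1dx\;\longrightarrow\; L_g\,\langle h\rangle_{(0,L_h)}\int_0^1 f(u)\,\varphi\,dx.
\end{equation*}
Passing to the limit in the unfolded identity \eqref{var=1} with $f^\varepsilon=f(u_\varepsilon)$, where the two remainder terms vanish by Propositions~\ref{uciO1} and \ref{propboundnessuci}(b) exactly as before, the left-hand side converges to $|Y^*|\int_0^1(q|u'|^{p-2}u'\varphi'+|u|^{p-2}u\varphi)\,dx$; dividing by $|Y^*|=L_g\langle g\rangle_{(0,L_g)}$ yields \eqref{variational_lim} with reaction coefficient $\langle h\rangle_{(0,L_h)}/\langle g\rangle_{(0,L_g)}$. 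Finally, the solution of \eqref{variational_lim} is unique—testing the difference of two solutions against itself and using the strict monotonicity of the principal part together with the Lipschitz bound on $f$ (here $p\ge 2$ is used)—so the limit is independent of the extracted subsequence and the whole family $\mathcal{T}_\varepsilon u_\varepsilon$ converges to $u$.

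I expect the main obstacle to be the circular dependence of the forcing on the unknown: $f(u_\varepsilon)$ cannot be identified before $u$ is known. The resolution is that the boundedness of $f$ decouples the a~priori estimate from this dependence, so that the linear Theorem~\ref{alpha1} can first deliver \emph{strong} convergence of $\mathcal{T}_\varepsilon u_\varepsilon$ with an as-yet-unidentified weak limit $\hat f$; it is precisely this strong (not merely weak) convergence that Proposition~\ref{conv_f} requires in order to pass to the limit in the nonlinear concentrated term and thereby close the identification $\hat f=f(u)$. A secondary point is the bookkeeping of the scaling factors $|Y^*|=L_g\langle g\rangle_{(0,L_g)}$ and $|Y^*_h|=L_h\langle h\rangle_{(0,L_h)}$, which is what converts the geometric averages into the effective coefficient $\langle h\rangle_{(0,L_h)}/\langle g\rangle_{(0,L_g)}$.
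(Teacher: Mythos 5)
Your main line is essentially the paper's own proof: treat $f(u_\varepsilon)$ as a bounded forcing so that the linear machinery applies, obtain strong convergence of $\mathcal{T}_\varepsilon u_\varepsilon$ in $L^p(0,1;W^{1,p}(Y^*))$, and then use Proposition \ref{conv_f} --- which is exactly where the strong (not merely weak) convergence enters --- to identify the limit of the concentrated nonlinear term, finishing with the same bookkeeping $|Y^*_h|=L_h\langle h\rangle_{(0,L_h)}$, $|Y^*|=L_g\langle g\rangle_{(0,L_g)}$ (your final coefficient $\langle h\rangle_{(0,L_h)}/\langle g\rangle_{(0,L_g)}$ is the correct one, matching \eqref{variational_lim}; the paper's last display in its proof has the quotient inverted, evidently a typo). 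The one packaging difference is that you invoke Theorem \ref{alpha1} as a black box with frozen forcing $f^\varepsilon=f(u_\varepsilon)$, whereas the paper re-runs the unfolded identity and cites \cite[Theorem 3.1]{nakasato1} directly. That shortcut is legitimate, but note a caveat: Theorem \ref{alpha1} as stated assumes the weak limit $\hat f$ belongs to $L^{p'}(0,1)$, i.e.\ depends on $x$ alone, and your compactness extraction only gives $\hat f\in L^{p'}((0,1)\times(0,L_g)\times Y^*_h)$. This is harmless --- neither Corollary \ref{conv_lin} nor the proof of Theorem \ref{alpha1} actually uses that restriction, since the averaged $\bar f$ makes sense for any $\hat f$, and a posteriori $\hat f=f(u)$ --- but you should say this explicitly rather than apply the theorem outside its stated hypotheses.

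The genuine gap is your final uniqueness step. Testing the difference $w=u_1-u_2$ of two solutions of \eqref{variational_lim} against itself, the strict monotonicity of the principal part (valid for $p\ge 2$) gives a lower bound of order $\|w\|_{W^{1,p}(0,1)}^p$, while the Lipschitz bound on $f$ gives an upper bound of order $\|f'\|_\infty\|w\|_{L^2(0,1)}^2$. For $p=2$ this reads $\|w\|_{H^1}^2\le C\|w\|_{L^2}^2$ with $C=\|f'\|_\infty\langle h\rangle_{(0,L_h)}/\langle g\rangle_{(0,L_g)}$, which forces $w=0$ only under a smallness condition on $C$ that is not assumed; for $p>2$ it reads $\|w\|^p\le C\|w\|^2$, which yields only the a priori bound $\|w\|^{p-2}\le C$ and never $w=0$. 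So uniqueness for the semilinear limit problem cannot be obtained by this argument (and can genuinely fail for such problems). This matters in your write-up because you rely on uniqueness to upgrade subsequence convergence to convergence of the whole family. To be fair, the paper fares no better on this point: its statement asserts ``the unique solution'' but its proof never addresses uniqueness, and it claims whole-sequence convergence by citing the arguments of \cite{nakasato1}. The defensible conclusion, for both your argument and the paper's, is convergence along subsequences to solutions of \eqref{variational_lim}, with full-sequence convergence guaranteed only when the limit problem has a unique solution.
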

	
	\begin{proof}
		Taking $\alpha = 1$, from Propositions \ref{unfoldingproperties}, \ref{unfoldingpropertiesg} and \ref{iteration} we can rewrite \eqref{variational} as
		\begin{align*}
		\label{var=101}
		\int_{(0,1)\times Y^*}&\mathcal{T}_\varepsilon(|\nabla u_\varepsilon|^{p-2}\nabla u_\varepsilon)\mathcal{T}_\varepsilon\nabla\varphi dxdy_1dy_2+\dfrac{L_g}{\varepsilon}\int_{R_1^\varepsilon}|\nabla u_\varepsilon|^{p-2}\nabla u_\varepsilon\nabla \varphi dxdy \ + \\ & + \int_{(0,1)\times Y^*}\mathcal{T}_\varepsilon(|u_\varepsilon|^{p-2} u_\varepsilon)\mathcal{T}_\varepsilon\varphi dxdy_1dy_2+\dfrac{L_g}{\varepsilon}\int_{R_1^\varepsilon}|u_\varepsilon|^{p-2} u_\varepsilon \varphi dxdy \ = \\ & = \dfrac{1}{L_h}\int_0^1\int_0^{L_g}\int_{Y_h^*}\mathcal{T}_\varepsilon^g(\mathcal{T}_\varepsilon f(u_\varepsilon))\mathcal{T}_\varepsilon^g(\mathcal{T}_\varepsilon \varphi)dz_1dz_2dy_1dx\ + \\ & +\dfrac{1}{\varepsilon^\gamma}\int_{\Lambda_\varepsilon^h\times Y^*_\varepsilon(x)}\mathcal{T}_\varepsilon(f(u_\varepsilon))\mathcal{T}_\varepsilon\varphi dy_1dy_2dx+\dfrac{L_g}{\varepsilon^{\gamma+1}}\int_{\mathcal{O}^\varepsilon_1}f(u^\varepsilon)\varphi dxdy
		\end{align*}
		
		Using property \ref{uci} from Proposition \ref{unfoldingproperties}, we have
		$$\dfrac{L_g}{\varepsilon}\int_{R_1^\varepsilon}|\nabla u_\varepsilon|^{p-2}\nabla u_\varepsilon\nabla \varphi dxdy\to 0
		\ \text{ and } \
		\dfrac{L_g}{\varepsilon}\int_{R_1^\varepsilon}|u_\varepsilon|^{p-2} u_\varepsilon \varphi dxdy\to 0$$
		when $\varepsilon\to0$.
		
		Hence, one can apply similar arguments from \cite[Theorem 3.1]{nakasato1} showing the existence of $u\in W^{1,p}(0,1)$ such that $\mathcal{T}_\varepsilon u_\varepsilon \to u$ strongly in $L^p(0,1;W^{1,p}(Y^*))$ and
		\begin{align}
		\int_{(0,1)\times Y^*}&\mathcal{T}_\varepsilon(|\nabla u_\varepsilon|^{p-2}\nabla u_\varepsilon)\mathcal{T}_\varepsilon\nabla\varphi dxdy_1dy_2+\dfrac{L_g}{\varepsilon}\int_{R_1^\varepsilon}|\nabla u_\varepsilon|^{p-2}\nabla u_\varepsilon\nabla \varphi dxdy \ + \nonumber \\ & + \int_{(0,1)\times Y^*}\mathcal{T}_\varepsilon(|u_\varepsilon|^{p-2} u_\varepsilon)\mathcal{T}_\varepsilon\varphi dxdy_1dy_2+\dfrac{L_g}{\varepsilon}\int_{R_1^\varepsilon}|u_\varepsilon|^{p-2} u_\varepsilon \varphi dxdy \nonumber \\ & \to |Y^{*}|\int_0^1(q|\partial_xu|^{p-2}\partial_xu\partial_x\varphi+|u|^{p-2}u\varphi )dx, \forall \varphi\in W^{1,p}(0,1), \nonumber
		\end{align}
		with 
		$$q=\dfrac{1}{|Y^*|}\int_{Y^*}|\nabla v|^{p-2}\partial_{y_1}vdy_1dy_2$$
		where $Y^*$ is the representative cell of $R^\varepsilon$ and $v$ is an auxiliary function given by \eqref{thm4.1auxiliarproblem}.		
%
%
		
		Analogously, using respectively Propositions \ref{uciO1} and \ref{propboundnessuci} (b), one gets 
		$$\dfrac{L_g}{\varepsilon^{\gamma+1}}\int_{\mathcal{O}^\varepsilon_1}f(u^\varepsilon)\varphi dxdy\to 0
		\quad \textrm{and} \quad  
		\dfrac{1}{\varepsilon^\gamma}\int_{\Lambda_\varepsilon^h\times Y^*_\varepsilon(x)}\mathcal{T}_\varepsilon(f(u_\varepsilon))\mathcal{T}_\varepsilon\varphi dy_1dy_2dx\to 0$$
		as $\varepsilon\to0$.
		Hence, it remains to analyze the behavior of 
		$$\dfrac{1}{L_h}\int_0^1\int_0^{L_g}\int_{Y_h^*}\mathcal{T}_\varepsilon^g(\mathcal{T}_\varepsilon f(u_\varepsilon))\mathcal{T}_\varepsilon^g(\mathcal{T}_\varepsilon \varphi)dz_1dz_2dy_1dx$$
		to conclude the proof. 
		
		By Proposition \ref{conv_f}, 
		since $\mathcal{T}_\varepsilon u_\varepsilon \to u$ strongly in $L^p((0,1);W^{1,p}(Y^*))$, for $\varphi\in W^{1,p}(0,1)$, previous integral converges to 
		\begin{align*}
		\dfrac{1}{L_{h}}\int_0^1\int_0^{L_g}\int_{Y^*_h}f(u) \varphi dz_1dz_2dy_1dx.
		\end{align*}
			
		Lastly, notice that 
		\begin{align*}
		\dfrac{1}{L_{h}}&\int_0^1\int_0^{L_g}\int_{Y^*_h}f(u(x)) \varphi(x) dz_1dz_2dy_1dx  = \dfrac{L_g}{L_{h}}\int_0^{L_h}\int_{-h(z_1)}^0\int_{0}^{1}f(u(x)) \varphi(x) dz_2dz_1dx \\&= \dfrac{L_g}{L_{h}}\int_0^{L_h}\int_{0}^{1}h(z_1)f(u(x)) \varphi(x) dz_1dx=L_g\langle h\rangle_{(0,L_{h})}\int_0^1f(u(x)) \varphi(x) dx 
		\end{align*}
	
		This leads us to the following limit problem
		\begin{equation*}
		q\int_{0}^{1}|\partial_{x}u|^{p-2}\partial_{x}u\;\partial_{x}\varphi \; dx +\int_{0}^{1}|u|^{p-2}u \varphi dx=\dfrac{\langle g\rangle_{(0,L_{g})}}{\langle h\rangle_{(0,L_{h})}}\int_{0}^{1}f(u) \varphi dx
		\end{equation*}
		that concludes our proof. The cases $\alpha<1$ and $\alpha>1$ follow similarly using arguments from \cite[Theorem 4.1, Theorem 5.3]{nakasato1} respectively.

	\end{proof}	

\begin{remark}
	If we analyze the problem \eqref{variational} with $p=2$ and $\alpha=1$, the Theorem \ref{conv_nl} is equivalent to \cite[Proposition 5.10]{AMA-thin}.
\end{remark}


\begin{thebibliography}{BH}
		
		
		
		\bibitem{gleice} G. S. Arag\~ao, A. L. Pereira and M. C. Pereira. \emph{A nonlinear elliptic problem with terms concentrating in the boundary}. Math. Meth. Appl. Sciences, 35 (2012) 1110--1116.
		
		
		
		\bibitem{arrieta} J. M. Arrieta, A. Jim\'enez-Casas and A. Rodr\'iguez-Bernal. \emph{Flux terms and Robin boundary conditions as limit of reactions and potentials concentrating at the boundary}. Revista Matem\'atica Iberoamericana. 24 (1) (2008) 183-211.
		
		\bibitem{nakasato1} J. M. Arrieta, J. C. Nakasato and M. C. Pereira. \emph{p-laplacian operator in thin domains: The unfolding approach}. Submitted, 2019.
		
		\bibitem{Nogueira} J. M. Arrieta; A. Nogueira; M. C. Pereira. \emph{Nonlinear elliptic equations with concentrating reaction terms at an oscillatory boundary}. Discrete \& Continuous Dynamical Systems - B, (2019), doi: 10.3934/dcdsb.2019079.
		
		\bibitem{AMA-thin} J.M. Arrieta, A. Nogueira, M. C. Pereira, \emph{Semilinear elliptic equations in thin regions}, \emph{Computers \& Mathematics with Applications} 77 (2019) 536–554.
		
		
		
		\bibitem{AM} J. M. Arrieta and M. Villanueva-Pesqueira, \emph{Unfolding operator method for thin domains with a locally periodic highly oscillatory boundary}. SIAM J. of Math. Analysis 48-3 (2016) 1634--1671.
		
		\bibitem{AM2} J. M. Arrieta and M. Villanueva-Pesqueira. \emph{Thin domains with non-smooth oscillatory boundaries}, J. of Math. Analysis and Appl. 446-1 (2017) 130--164. 
		
		
		\bibitem{SMarcone} S. R. M. Barros and M. C. Pereira. {\it Semilinear elliptic equations in thin domains with reaction terms concentrating on boundary}. J. of Math. Analysis and Appl. 441 (1) (2016) 375--392.
		
		
		
		\bibitem{bunoiu} R. Bunoiu, P. Donato. \emph{Unfolding homogenization in doubly periodic media and applications}. Applicable Analysis, v. 96, n. 13 (2017) 2218--2235.
		
		\bibitem{cioranescu2000}
		D.~Cioranescu and P.~Donato. \emph{Introduction to homogenization}, 2000.
		
		
		
		
		\bibitem{cringanu} J. Cringanu.\emph{ Variational and topological methods for Neumann problems with p-Laplacian}. Communications on Applied Nonlinear Analysis, v. 11 (2004) 1--38.
		
		
		
		\bibitem{donato1990}
		P.~Donato and G.~Moscariello, \emph{On the homogenization of some nonlinear
		problems in perforated domains}. Rendiconti del Seminario Matematico
			della Universit{\`a} di Padova, vol.~84 (1990) 91--108.
		
		
		
		
		
		
		
		
		
		
		
		\bibitem{lions2001} J. L. Lions, D. Lukkassen, L. E. Persson and P. Wall. \emph{Reiterated homogenization of nonlinear monotone operators}. Chinese Annals of Mathematics, 22(01) (2001) 1--12.
		
		\bibitem{grisvard} P. Grisvard. Elliptic Problems in Nonsmooth Domains. Society for Industrial and Applied Mathematics, 2011.
		
		
		
		
		
		\bibitem{nakasato} J. C. Nakasato and M. C. Pereira. \emph{The p-laplacian operator in oscillating thin domains}. submitted, 2019.
		
		
		
		\bibitem{PazaninPereira} I. Pazanin and M. C. Pereira. \emph{On the nonlinear convection-diffusion-reaction problem in a thin domain with a weak boundary absorption}, Comm. Pure and Appl. Analysis 17 (2018) 579--592.
		
		
		
		
		
		
		\bibitem{MRi2} M. C. Pereira and R. P. Silva. \emph{Remarks on the p-laplacian on thin domains}, Progress in Nonlinear Diff. Eq. and Their Appl.(2015) 389--403.
		
		
		
		
		
		
		
	\end{thebibliography}
\end{document}